\documentclass[a4paper,12pt]{article}
\usepackage[utf8x]{inputenc}
\usepackage{graphicx}
\usepackage[T1]{fontenc}
\usepackage{amsmath}
\usepackage{amsfonts}
\usepackage{amssymb}
\usepackage{amsthm}
\usepackage{amstext}
\usepackage{pstricks}
\usepackage{color}
\usepackage{makeidx}
\makeindex
\usepackage{tocloft}
\usepackage{hyperref}



%


\newdimen\CdotAxis
\newcommand*{\CdotAux}[3]{%
  {%
    \settoheight\CdotAxis{$#2\vcenter{}$}%
    \sbox0{%
      \raisebox\CdotAxis{%
        \scalebox{#1}{%
          \raisebox{-\CdotAxis}{%
            $\mathsurround=0pt #2#3$%
          }%
        }%
      }%
    }%
    \dp0=0pt %
    \sbox2{$#2\bullet$}%
    \ifdim\ht2<\ht0 %
      \ht0=\ht2 %
    \fi
    \sbox2{$\mathsurround=0pt #2#3$}%
    \hbox to \wd2{\hss\usebox{0}\hss}%
  }%
}

\newtheorem{theorem}{Theorem}[section]
\newtheorem{lemma}[theorem]{Lemma}
\newtheorem{corollary}[theorem]{Corollary}
\newtheorem{proposition}[theorem]{Proposition}

\theoremstyle{definition}

\theoremstyle{remark}

\setlength{\parindent}{0pt}
\setlength{\parskip}{1em}

\def \cA {\mathcal{A}}

\def \cD {\mathcal{D}}

\def \cN {\mathcal{N}}

\def \cP {\mathcal{P}}

\def \cS {\mathcal{S}}

\def \cW {\mathcal{W}}

\def \a {\alpha}
\def \b {\beta}
\def \g {\gamma}
\def \d {\delta}
\def \e {\varepsilon}

\def \k {\kappa}
\def \l {\lambda}

\def \L {\Lambda}

\def \A {\mathbb{A}}
\def \B {\mathbb{B}}

\def \dD {\mathbb{D}}

\def \K {\mathbb{K}}

\def \N {\mathbb{N}}

\def \P {\mathbb{P}}

\def \R {\mathbb{R}}

\def \dU {\mathbb{U}}

\def \W {\mathbb{W}}


\def \lra {\longrightarrow}

\def \Lra {\Longrightarrow}


\def \Ot {(O_n)_{n\geq 0}}
\def \Zt {(Z_n)_{n\geq 0}}

\def \pml {\cP^m_{\ell +1}}
\def \zl {\{\, 0,\dots,\ell \,\}}

\def \zk {\{\, 0,\dots,K \,\}}

\def \cW {{\mathcal W}^*}

\def \exa {e^{-a}}

\def\uro{\smash{{U}^{\!\!\!\!\raise5pt\hbox{$\scriptstyle o$}}}}

\def\lmq {{\genfrac{}{}{0pt}{1}{\ell,m\to\infty,\,q\to0}{\ell q\to a,\,m/\ell\to\a}}}
\def\lmqq {{\genfrac{}{}{0pt}{1}{\ell,m\to\infty,\,q\to0}{\ell q\to a}}}
\def\lq {{\genfrac{}{}{0pt}{1}{\ell\to\infty,\,q\to0}{\ell q\to a}}}

\begin{document}

\begin{center}
\begin{LARGE}
The Wright--Fisher model\\[3 pt]
for class--dependent fitness landscapes
\end{LARGE}

\begin{large}
Joseba Dalmau

\vspace{-12pt}
CMAP, \'Ecole Polytecnique

\today

\vspace{4pt}
\end{large}
\end{center}

\begin{abstract}
\noindent
We consider a population evolving under mutation and selection.
The genotype of an individual is a word of length $\ell$
over a finite alphabet.
Mutations occur during reproduction,
independently on each locus;
the fitness depends on the Hamming class (the distance to a 
reference sequence $w^*$).
Evolution is driven according to the classical Wright--Fisher process.
We focus on the proportion of the different classes under the invariant measure of the process.
We consider the regime where the length of the genotypes $\ell$ goes to infinity,
and

\vspace{-0.5 em}
\begin{center}
$\text{population size}\,\sim\,\ell\,,\qquad
\text{mutation rate}\,\sim\,1/\ell\,.$
\end{center}

\vspace{-0.5 em}
\noindent
We prove the existence of a critical curve,
which depends both on the population size and the mutation rate.
Below the critical curve,
the proportion of any fixed class converges to $0$,
whereas above the curve, 
it converges to a positive quantity,
for which we give an explicit formula.
\end{abstract}

\section{Introduction}
Most of the living populations share, among others, 
these three main features:
genomes are long, populations are large, and mutations are rare.
Nevertheless, when modeling a living population,
different relations between those three parameters
will lead to different conclusions.
We focus here on a situation which is most appropriate 
for living beings of small complexity,
as RNA viruses, or replicating macromolecules:
we aim to model a population in which
both the population size and the inverse mutation rate
are of the same order as the length of the genome~\cite{GEFS}.
The main forces that will drive the evolution of such a population
are, of course, mutation, but also selection, and genetic drift.
Selection is introduced via a fitness function on the genotypes,
which encodes the average number of offspring of an individual
carrying a particular genotype.
Genetic drift is introduced by considering a finite population of constant size.
This modeling situation is known to lead to very particular
and interesting phenomena:

\textbf{Error threshold.} 
There is a critical mutation rate separating two different regimes.
Above the critical mutation rate, all genetic information is eventually lost,
while below the critical mutation rate, an equilibrium state is reached
in which the fittest genotype (the master sequence) is present in a positive proportion.

\textbf{Quasispecies.}
The equilibrium that is reached below the error threshold 
consists of a positive proportion of the fittest genotype,
which may be very low, and mutants that are a few mutations away
from the master sequence may appear in high proportions.
Thus, the genetic heterogeneity of such an equilibrium state is huge,
and we might as well not be able to identify the master sequence.
Such a population is often referred to as a quasispecies.

\textbf{Population threshold.}
A low mutation rate is not enough for a quasispecies to form.
Indeed, if the population is too small, it is likely that the master sequences
present in the population mutate all at once or in a few generations,
thus loosing the driving force of the quasispecies. This event becomes
more and more unlikely as the population size grows,
thus giving rise to a second threshold phenomenon, namely a population threshold.

The first two phenomena where first observed by Eigen, 
in a mathematical model for prebiotic populations~\cite{Eigen1}.
The concept of quasispecies was later popularized by Eigen and Schuster~\cite{ES1}.
The model considered by Eigen takes the population size to be infinite,
and models the evolution via a system of differential equations.
The system is studied in the long chain regime, i.e., when 
the length of the genomes goes to infinity, 
and the error threshold and quasispecies phenomena are found. 
In order to observe the population threshold,
it is necessary to consider a model where the population is taken to be finite.
This phenomenon has first been observed in~\cite{CerfM} for the Moran model
and~\cite{CerfWF} for the Wright--Fisher model.
A nice account of the error threshold 
and quasispecies phenomena, the main models where they arise, and their applications
can be found in~\cite{doschu}.
We refer the reader to~\cite{CerfM} for a more detailed exposition
of the different attempts to build finite population models that
present the error threshold and the quasispecies.

Most of the works that show the above three phenomena deal with the simplest possible 
fitness landscape, namely the sharp--peak landscape: there is a single fittest genotype,
the master sequence, and all the other genotypes share the same fitness.
The works~\cite{CerfWF,DWF} show how, in the sharp--peak landscape,
the Wright--Fisher model presents all three of the above phenomena.
Our objective is to extend these results to more general fitness landscapes.
We focus in the present paper on the case of class--dependent fitness functions:
there is a single fittest genotype,
and the fitness of any other genotype is a function of its Hamming distance
to the fittest genotype.
We present the model in section~\ref{model},
while the main result is presented in section~\ref{result},
along with a sketch of the proof.
The remaining sections are devoted to the proof of the main result.

\section{The model}\label{model}
Let $\cA$ be a finite alphabet of cardinality $\k\geq2$,
and let $\ell\geq 1$ represent the length of the genome.
We consider individuals whose genotypes are elements of $\cA^\ell$.
Each genotype $u\in\cA^\ell$ has a fitness $A(u)$
associated to it,
which should be interpreted as
the mean number of children of an individual carrying the genotype $u$.
When a reproduction occurs,
the newborn child is subject to mutations.
We suppose that mutations happen independently 
over each site of the genotype,
with probability $q\in\,]0,1[\,$.
When a particular site mutates, 
the present letter is replaced with 
a uniformly chosen letter from the $\k-1$ remaining ones.
Thus, the probability of mutating from a chain $u$
to another chain $v$ is given by
$$M(u,v)\,=\,\Big(
\frac{q}{\k-1}
\Big)^{d(i,v)}(1-q)^{\ell-d(u,v)}\,,$$
where $d(u,v)$ represents the Hamming distance between $u$ and $v$,
or equivalently, the number of digits the two sequence differ in.

The evolution will be guided by the classical Wright--Fisher process.
Nevertheless, the analysis of the Wright--Fisher process for an arbitrary fitness function $A$
is far too complicated. We focus here on fitness functions of a particular form,
namely the class--dependent fitness functions. We make the following assumptions on $A$.

\textbf{Master sequence.} We assume the existence of a genotype with maximal fitness
$w^*\in\cA^\ell$, which we call the master sequence.

\textbf{Class--dependence.} We assume further that the fitness of a 
genotype $u$ depends only on the number of point mutations away from the master sequence.
All the sequences at Hamming distance $k$ from the master sequence
form the Hamming class $k$, and they all share the same fitness.

\textbf{Eventually constant.}
Finally we assume that there is a Hamming class $K\geq0$ such that 
the fitness of all the genotypes in the classes over $K$ is 1.

Under these assumptions,
we can define a function $A_H:\zl\to\R_+$ such that:

$\bullet$ $A_H(0)>A_H(k)$ for all $1\leq k\leq \ell$.

$\bullet$ For all $u\in\cA^{\ell}$ we have $A(u)=A_H\big(d(u,w^*)\big)$.

$\bullet$ $A_H(K)\neq 1$ and $A_H(k)=1$ for all $K+1\leq k\leq \ell$.

When $K=0$, 
all the genotypes other than the master sequence have fitness 1.
This particular case is referred to as the sharp--peak landscape;
the Wright--Fisher model on the sharp peak landscape
has been studied in detail in~\cite{CerfWF,DWF}.
Our aim is to generalize the results therein
to class--dependent fitness functions which are eventually constant.
One of the main advantages of working with class--dependent
fitness functions is that we can break the space $\cA^\ell$ into Hamming classes.
This is possible because the mutation matrix $M$ respects the Hamming classes (cf.~\cite{CerfWF} for a proof):
fix $0\leq k,l\leq \ell$ and let $X\sim Bin(k,q/(\k-1))$, $Y\sim Bin(\ell-k,q)$
be independent random variables, then for any $u\in\cA^\ell$ in the class $k$,
$$\sum_{v:d(v,w^*)=l}M(u,v)\,=\,P(
k-X+Y=l
)\,.$$
We denote the above quantity by $M_H(k,l)$, and we call $M_H$
the lumped mutation matrix, and $A_H$ the lumped fitness function.
The original mutation matrix $M$ and fitness function $A$ have served their purpose,
and we will not make reference to them again in the rest of the paper.

\textbf{Notation.} In order to ease the notations,
we will no longer add the subscript $H$ to the lumped fitness function
or the lumped mutation matrix, and we will denote them simply by $A$ and $M$.

We consider a population of size $m\geq 1$ evolving according to the classical Wright--Fisher process.
Informally, the transition from the population at time $n$,
to the population at time $n+1$ is done as follows:
$m$ individuals are sampled from the population at time $n$, with replacement.
At each of the $m$ trials, the probability for a given individual to be chosen
is 
$$\frac{\text{fitness of the individual}}{\text{sum of all fitnesses in the population}}\,.$$
Each of the $m$ chosen individuals reproduces, and the offspring mutate.
The ensemble of the $m$ offspring, after mutation, form the population at time $n+1$.
We will only be interested in the proportions of the different Hamming classes,
and not on the distribution of the different genotypes inside the classes themselves;
the only information we actually need about the population at time $n$,
is the number of individuals in each of the Hamming classes.
Indeed, this information is enough to determine the number of individuals in each class at time $n+1$.
The process that keeps this information is
the occupancy process
$\Ot$ and it
will be the starting point of our study.
It is obtained from the original Wright--Fisher process
$(X_n)_{n\geq 0}$
by using a technique known as lumping;
for a formal definition of the original Wright--Fisher process,
as well as for a formal derivation of the occupancy process from it,
we refer the reader to sections 2 and 4 of \cite{CerfWF}.
\color{black}
Let $\pml$ be the set of the ordered partitions 
of the integer $m$ in at most $\ell+1$ parts:
$$
\pml\,=\,
\big\lbrace\,
(o(0),\dots,o(\ell))\in\N^{\ell+1}:
o(0)+\cdots+o(\ell)=m
\,\big\rbrace\,.
$$
A partition $(o(0),\dots,o(\ell))$
is interpreted as an occupancy distribution,
which corresponds to a population with $o(l)$ 
individuals in the Hamming class $l$.
The occupancy process $\Ot$
is a Markov chain with values in $\pml$
and transition matrix given by:
for $ o,o'\in\pml$,
$$
p_O(o,o')\,=\,
\frac{m!}{o'(0)!\cdots o'(\ell)!}
\prod_{0\leq h\leq\ell}\Bigg(
\frac{\sum_{k\in\zl}o(k)A(k)M(k,h)}{\sum_{h\in\zl}o(h)A(h)}
\Bigg)^{o'(h)}\,.
$$
Let $\cS^\ell$ denote the $\ell$--dimensional unit simplex 
$$\cS^\ell\,=\,\big\lbrace\,
x\in\R^{\ell+1}:x_0\geq0,\dots,x_\ell\geq0\ \text{and}\
x_0+\cdots+x_\ell=1
\,\big\rbrace\,.$$
We define the function $F:\cS^{\ell}\lra\cS^{\ell}$ by
setting
$$\forall\, x\in\cS^{\ell}\quad \forall\, k\in\zl\qquad
F_k(x)\,=\,\frac{\displaystyle\sum_{0\leq h\leq\ell}
x_hA(h)M(h,k)}
{\displaystyle 1+\sum_{0\leq h\leq K}
x_h(A(h)-1)}\,.
$$
In view of the expression of the transition matrix,
for all $o\in\pml$ and $n\geq0$,
given that $O_n=o$, the random vector $O_{n+1}$
follows a multinomial law with parameters $m$ and $F(o/m)$.

\textbf{Notation.}
The expression appearing in the denominator of the function $F(x)$
represents the mean fitness of the population $x$.
Since it will recurrently appear in the subsequent formulas,
for any $k\geq K$ and $x\in\R^{k+1}$, we denote 
$$\phi(x)\,=\,1+\sum_{0\leq h\leq K}x_h(A(h)-1)\,.$$

A straightforward treatment of the occupancy process is hardly tractable.
Lucky for us, in most living populations, genomes are long, 
populations large, and mutations rare. 
We will thus carry out the study of the occupancy process in
the following asymptotic regime:
$$\displaylines{
\ell\to +\infty\,,\qquad m\to +\infty\,,\qquad q\to 0\,,\cr
{\ell q} \to a\in\,]0,+\infty[\,\,,
\qquad\frac{m}{\ell}\to\alpha\in\,]0,+\infty[\,\,.}$$
This asymptotic regime has two main consequences
on the normalized occupancy process $(O_n/m)_{n\geq0}$,

$\bullet$ Since $m\to\infty$,
the multinomial law involved in the transition mechanism of the process
concentrates around its mean, which is given by the mapping $F$,
and the trajectories of the process tend to be close to those
of the discrete dynamical system given by the iterates of $F$.

$\bullet$ Since $\ell,1/q\to\infty$ and $\ell q\to a$,
the mutation matrix $M$ converges to an infinite upper diagonal matrix $M_\infty$;
the probability of mutating to a lower class converges to 0,
and the probability of jumping forward converges to a Poisson law of parameter $a$
(cf. the appendix~\ref{Propmut}).

Let $k\geq K$ and define the set 
$\cD^k$ by
$$\cD^k\,=\,
\big\lbrace\,
r\in\R^{k+1}:
r_0\geq 0,\dots,r_k\geq 0 \text{ and } r_0+\cdots+r_k\leq 1
\,\big\rbrace\,.$$
The first $k+1$ coordinates of $F$ 
converge 
to a mapping $G:\cD^k\lra\cD^k$ given by
$$\forall\, r\in\cD^k\quad \forall\, i\in\lbrace\,0,\dots,k\,\rbrace\qquad
G_i(r)\,=\,\phi(r)^{-1}\displaystyle\sum_{h=0}^i r_h A(h)\frac{a^{i-h}}{(i-h)!}\,.$$
Thus, asymptotically,
the coordinates $0,\dots,k$ of the normalized occupancy process
can be seen as a random perturbation
of the discrete dynamical system given by the iterates of $G$:
$$r^0\in\cD\,,\qquad r^n\,=\,G(r^{n-1})\,=\,G^n(r^0)\,,\quad n\geq1\,.$$
In fact, this dynamical system will play a key role in our analysis. 
The mapping $G$ and the dynamical system associated to it
have extendedly been studied in the works~\cite{CD2} and~\cite{Dalmau}.
The main results concerning the fixed points of $G$ are given in~\cite{CD2},
while the results concerning the stability of the fixed points
and the convergence of the dynamical system are given in~\cite{Dalmau}.
We summarize these results in the upcoming propositions.
Consider the set of indexes $b\in\lbrace\,0,\dots,K\,\rbrace$ such that $A(b)\exa>1$ and
$A(b)>A(j)$ for all $j>b$.
Let $I_A$ be the set of these indexes, to which
the index $K+1$ has been added too.
\begin{proposition}
\label{fixed}
The mapping $G$ has as many fixed points in $\cD$ as there are elements in $I_A$.
For each $b\in I_A$,
the associated solution $\rho^b$ is given by 
$\rho^b_0=\cdots=\rho^b_{b-1}=0$ 
and for $0\leq k\leq K-b$,
\begin{multline*}
\rho^b(b+k)\,=\,
\Bigg(\frac{1}{A(b)}+\!\!\!\!\!\!\sum_{\genfrac{}{}{0pt}{1}{h\geq 1}{0=i_0<\cdots<i_h}}
\frac{a^{i_h}}{A(b+i_h)}\prod_{t=1}^h\frac{A(b+i_t)}{(i_t-i_{t-1})!(A(b)-A(b+i_t))}\Bigg)^{-1}
\\
\times\Bigg(\frac{1}{A(b)}1_{k=0}+
\frac{a^k}{A(b+k)}\!\!\!\!\!\!\!\!
\sum_{\genfrac{}{}{0pt}{1}{1\leq h\leq k}{0=i_0<\cdots<i_h=k}}
\prod_{t=1}^h \frac{A(b+i_t)}{(i_t-i_{t-1})!(A(b)-A(b+i_t))}\Bigg)
\,.\end{multline*}
\end{proposition}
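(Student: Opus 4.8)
The plan is to solve $G(\rho)=\rho$ directly, exploiting the triangular structure of $G$. Written coordinate by coordinate, a fixed point $\rho\in\cD^k$ satisfies, for $0\le i\le k$,
$$\phi(\rho)\,\rho_i \;=\; \exa\sum_{h=0}^{i}\rho_hA(h)\,\frac{a^{i-h}}{(i-h)!}\,,$$
keeping in mind that the limiting forward--mutation weights are those of a Poisson law of parameter $a$ and that $A(h)=1$ for $h>K$. Clearly $\rho=0$ is a fixed point; it is the one indexed by $K+1$. For a nonzero fixed point let $b$ be the least index with $\rho_b>0$. The equation for $i=b$ gives $\phi(\rho)=A(b)\exa$, so the mean fitness is pinned down by $b$; the same equation rules out $b\ge K+1$ (there $\phi(\rho)=1$ and it would read $\rho_b=\exa\rho_b$), so $b\le K$. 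Using $\phi(\rho)=A(b)\exa$, the equations for $b<i\le k$ reduce to the triangular form
$$\bigl(A(b)-A(i)\bigr)\,\rho_i \;=\; \sum_{h=b}^{i-1}\rho_hA(h)\,\frac{a^{i-h}}{(i-h)!}\,.$$

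First I would prove a lemma carrying most of the weight: \emph{every nonzero fixed point $\rho\in\cD^k$ satisfies $\phi(\rho)>1$ and $\rho_0+\dots+\rho_k<1$}. Summing the fixed--point equations over $0\le i\le k$, exchanging the order of summation and bounding each partial exponential sum by $e^a$ (strictly overall, since $\rho\ne 0$) gives $\phi(\rho)\,S<\sum_{h=0}^{k}\rho_hA(h)$ with $S:=\rho_0+\dots+\rho_k$; but $\sum_{h=0}^{k}\rho_hA(h)=\phi(\rho)-1+S$ by the definition of $\phi$ together with $A\equiv 1$ past $K$, so this reads $(\phi(\rho)-1)(S-1)<0$, and $S\le 1$ then forces $S<1$ and $\phi(\rho)>1$. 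In particular $A(b)\exa>1$, hence $A(b)>e^a>1$. Moreover, if $A(j)\ge A(b)$ for some $b<j\le K$, take the least such $j$: the triangular equations for $b<i<j$ give $\rho_i>0$ (there $A(b)>A(i)$ and the right side is positive), and the equation for $i=j$ then forces $\rho_j<0$ if $A(j)>A(b)$, and is outright contradictory --- a vanishing left side against a positive right side --- if $A(j)=A(b)$. Hence the least nonzero index $b$ of any fixed point satisfies $A(b)>A(j)$ for all $j>b$ and $A(b)\exa>1$, i.e.\ $b\in I_A$, with $b=K+1$ corresponding to $\rho=0$.

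For the converse, fix $b\in I_A$ with $b\le K$. Since $A(b)>A(j)$ for all $j>b$, the triangular recursion determines $\rho_i=\rho_b\,w_{i-b}$ from $\rho_b$, with $w_0=1$ and every $w_j>0$; an induction on $j$ that unwinds the recursion one step at a time writes $w_j$ as the sum over increasing chains $0=i_0<\dots<i_h=j$ of the statement, each step contributing the factor $A(b+i_t)/\bigl((i_t-i_{t-1})!\,(A(b)-A(b+i_t))\bigr)$ and an overall $a^j$. The scalar $\rho_b$ is then fixed by the remaining relation $\phi(\rho)=A(b)\exa$, that is $1+\sum_{h=b}^{K}\rho_h(A(h)-1)=A(b)\exa$. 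Because $A(b)>e^a$ the series $\sum_{j\ge 0}w_j$ converges (its generating function has denominator $A(b)-e^{ax}$, whose smallest root $a^{-1}\ln A(b)$ exceeds $1$), so summing over $i\ge 0$ the equations satisfied by the associated infinite array yields the mass--conservation identity $\rho_b\sum_{j\ge 0}w_j=1$; inserting it into the relation for $\rho_b$ turns that relation into precisely the normalizing constant of the proposition. Finally $\rho\in\cD^k$: its coordinates are nonnegative, and $\rho_0+\dots+\rho_k=\rho_b\sum_{j=0}^{k-b}w_j<\rho_b\sum_{j\ge 0}w_j=1$.

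Combining the two directions, the fixed points of $G$ in $\cD^k$ are in bijection with $I_A$, which is the assertion. The step I expect to be the real work is the induction producing the chain--sum formula for $w_j$ and the identification of the normalizing constant --- where one has to recognize the mass--conservation identity $\rho_b\sum_j w_j=1$ in order to pass from the $\exa$--dependent relation $\phi(\rho)=A(b)\exa$ to the $\exa$--free expression in the statement. The location of the error threshold $A(b)\exa>1$ and the elimination of the remaining candidate fixed points, by contrast, come out almost for free from the mean--fitness lemma.
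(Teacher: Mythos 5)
The paper does not actually prove Proposition~\ref{fixed}: it cites~\cite{CD2} for the fixed--point results and merely records the statement. So there is no ``paper's own proof'' to compare against, and I assess your argument on its own terms.

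Your route is correct and self--contained: extract $\phi(\rho)=A(b)\exa$ from the $i=b$ equation, rule out $b\ge K+1$, reduce the remaining equations to the triangular recursion $(A(b)-A(i))\rho_i=\sum_{h<i}\rho_hA(h)a^{i-h}/(i-h)!$, and establish the mean--fitness lemma $(\phi(\rho)-1)(S-1)<0$ by summing and bounding the truncated exponential sums; this cleanly forces $A(b)\exa>1$ and $A(b)>A(j)$ for $j>b$, i.e.\ $b\in I_A$. The converse --- unwinding the recursion into the chain sums of the statement, and normalizing by the generating--function identity $W(x)\bigl(A(b)-e^{ax}\bigr)=P(x)e^{ax}$, so that $W(1)<\infty$, $P(1)=\bigl(A(b)\exa-1\bigr)W(1)>0$, and $\rho_bW(1)=1$ --- checks out and reproduces exactly the normalizing constant of the proposition, the factor $A(b)/A(b+j)$ included. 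Two remarks. First, you correctly reinstated the missing $\exa$ in $G_i$: as printed, the paper's display for $G$ drops it, but the limit $M_\infty(i,j)=\exa\,a^{j-i}/(j-i)!$ in the appendix and the threshold $A(b)\exa>1$ in the definition of $I_A$ make clear it belongs there; without it none of the arithmetic works. Second, when you say the mass--conservation identity ``turns the relation for $\rho_b$ into the normalizing constant,'' it is worth making explicit that the step uses $P(1)=(A(b)\exa-1)W(1)>0$, an identity on the $w_j$ alone (independent of $\rho_b$); this is what guarantees the normalization $1+\rho_bP(1)=A(b)\exa$ has a positive solution and that the $\exa$--dependent relation collapses to $\rho_bW(1)=1$. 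Your generating--function computation already provides it, so this is a matter of phrasing rather than a gap.
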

Note that in particular, the solution corresponding to the index $K+1$
is identically $0$, and that it is the only fixed point of $G$
if and only if $A(0)\exa\leq1$.
Let $I_A=\lbrace\,b_1,\dots,b_N\,\rbrace$ and note that $N=1$ 
corresponds to $0$ being the only fixed point of $G$.
Define, for $b\in I_A$,
the set $D_b\subset \cD$ by
$$D_b\,=\,\Big\lbrace\,r\in \cD:
r_0+\cdots+r_{b-1}=0,\Big\rbrace\,.$$
We have the following result.
\begin{proposition}\label{convds}
Let $r\in\cD\setminus\lbrace\,0\,\rbrace$. For every $i\in\lbrace\,1,\dots\,N\rbrace$,
$$\lim_{n\to\infty}G^n(r)\,=\,\rho^{b_i}$$
if and only if
$$r_0=\cdots=r_{b_{i-1}}=0\qquad\text{and}\qquad
\max_{b_{i-1}<k\leq b_i}r_k>0\,.$$
Moreover, the map $G$ 
is contracting in a small enough neighborhood of $\rho^b$ intersected with $D_b$.
\end{proposition}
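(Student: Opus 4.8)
The plan is to exploit the lower-triangular structure of $G$ --- the coordinate $G_i(r)$ depends only on $r_0,\dots,r_i$ --- together with a passage to ``ratio coordinates'' that turns the core of the dynamics into a single fixed linear map plus a scalar recursion. (I use $G$ with the Poisson normalisation from the appendix, so that $\phi(r)^{-1}\exa$ multiplies the sum defining $G_i$.) First I would record two structural facts: $G_j(r)=0$ whenever $r_0=\cdots=r_j=0$, whereas $G_j(r)=\phi(r)^{-1}\exa\,r_jA(j)>0$ as soon as $r_j>0$ and $r_0=\cdots=r_{j-1}=0$; hence the \emph{leading class} $j^*(r):=\min\{i:r_i>0\}$ is exactly conserved along every orbit, and each face $D_b$ is forward invariant. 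Moreover, on $D_b$ the index shift $i\mapsto i-b$ conjugates $G$ to the analogous map built from the shifted fitness $\widetilde A=A(b+\,\cdot\,)$ (the denominator $\phi$ is unaffected, the classes below $b$ carrying no mass), and one checks that $I_A$, the truncation level, and the fixed points of Proposition~\ref{fixed} all transform compatibly under this shift. Consequently it is enough to prove that $r_0>0$ forces $G^n(r)\to\rho^{b_1}$, $b_1=\min I_A$: the general case follows by shifting down by $j:=j^*(r)$, using that $\min I_{A(j+\,\cdot\,)}=b_i-j$ where $b_i$ is the element of $I_A$ with $b_{i-1}<j\le b_i$, and that the shifted-up limit equals $\rho^{b_i}$ by the uniqueness in Proposition~\ref{fixed}. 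The ``only if'' direction is then immediate: the orbit from any $r\neq0$ converges to the single $\rho^{b_m}$ with $b_{m-1}<j^*(r)\le b_m$, and the $\rho^b$ are pairwise distinct, so convergence to $\rho^{b_i}$ forces $m=i$.

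For the case $r_0>0$ I would set $y_i=r_i/r_0$. The factor $\phi(r)^{-1}\exa$ cancels in the ratios, so $(y^n)_{n\ge0}$ obeys the fixed \emph{linear} recursion $y^{n+1}=A(0)^{-1}Ly^n$, where $L_{ih}=A(h)a^{i-h}/(i-h)!$ for $i\ge h$; in particular $y_0^n\equiv1$. The eigenvalues of $A(0)^{-1}L$ are the ratios $A(i)/A(0)$, the value $1$ being simple and strictly dominant since $A(0)>A(i)$ for $i\ge1$; its eigenvector $v$ has $v_0=1$, $v_i(A(0)-A(i))=\sum_{h<i}v_hA(h)a^{i-h}/(i-h)!>0$, and every other (generalised) eigenvector has vanishing $0$-th coordinate, so the $v$-component of $y^0$ is $1$ and $y^n\to v$ geometrically. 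It remains to handle $z_n:=r_0^n$, which satisfies $z_{n+1}=A(0)\exa\,z_n/(1+z_n\psi(y^n))$ with $\psi(y)=\sum_{0\le h\le K}y_h(A(h)-1)$; this is an asymptotically autonomous scalar recursion with limit map $z\mapsto A(0)\exa\,z/(1+z\psi_v)$, $\psi_v:=\sum_hv_h(A(h)-1)$. Summing $Lv=A(0)v$ over the truncated range, $A(0)\sum_iv_i=\sum_hv_hA(h)P_h$ with $P_h=\sum_{j=0}^{k-h}a^j/j!\in[1,P_0]$ and $P_0<e^a$, which gives $(A(0)\exa-1)\sum_iv_i<\psi_v\le(A(0)-1)\sum_iv_i$. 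If $A(0)\exa>1$ (equivalently $b_1=0$) this forces $\psi_v>0$, and $\phi(\rho^0)=A(0)\exa$ identifies the unique attracting fixed point of the limit map on $(0,1]$ as $z^*=(A(0)\exa-1)/\psi_v=\rho^0_0$; since near $0$ one has $A(0)\exa z/(1+z\psi(y^n))>z$ uniformly in $n$, $z_n\not\to0$, whence $z_n\to\rho^0_0$ and $r^n=z_ny^n\to\rho^0_0\,v=\rho^0$. If $A(0)\exa\le1$ (equivalently $I_A=\{K+1\}$, $b_1=K+1$, $\rho^{b_1}=0$), the a priori bound $z_n\le1/\sum_iy_i^n$ together with $\psi_v>(A(0)\exa-1)\sum_iv_i$ yields $1+z_n\psi(y^n)>A(0)\exa$ for large $n$, so $(z_n)$ is eventually decreasing; its limit $z_\infty$ has $z_\infty v\in\cD$ and $z_\infty v$ fixed by $G$, hence $z_\infty=0$ and $r^n\to0=\rho^{b_1}$.

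For the ``moreover'', the shift reduction lets us take $b=0$ (for $b=K+1$, restrict $G$ to $D_{K+1}$, where $\phi\equiv1$ and $G$ is $\exa$ times the Poisson-shift matrix, of spectral radius $\exa<1$). Differentiating $G$ at $\rho^0$ gives $DG(\rho^0)=A(0)^{-1}L-A(0)^{-1}e^{a}\,\rho^0\otimes c$, with $c_h=(A(h)-1)\mathbf{1}_{h\le K}$; since $\rho^0$ is proportional to the eigenvector $v$ of $L$ and $c\cdot\rho^0=\phi(\rho^0)-1=A(0)\exa-1$, the rank-one perturbation formula for eigenvalues gives $\mathrm{spec}\,DG(\rho^0)=\{1/(A(0)\exa)\}\cup\{A(i)/A(0):1\le i\le k\}\subset(0,1)$. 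Hence $G$ is a contraction for a suitable norm on a neighbourhood of $\rho^0$, a fortiori on its intersection with $D_0=\cD$.

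I expect the main difficulty to be twofold: making the shift reduction fully rigorous (tracking how $I_A$, the fixed points $\rho^b$, and the truncation parameter transform, and checking compatibility with the indexing around the formal class $K+1$), and the sub-threshold case $A(0)\exa\le1$, where the relevant fixed point $z^*=0$ of the limit scalar map is only neutrally attracting, so one genuinely needs the constraint $r^n\in\cD$ and the sharp comparison $\psi_v>(A(0)\exa-1)\sum_iv_i$ rather than a hyperbolic-fixed-point argument.
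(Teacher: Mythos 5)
The paper itself gives no proof of this proposition: it is quoted from \cite{CD2} and \cite{Dalmau}, so there is nothing internal to compare your argument against. Your base case ($r_0>0$, hence $j^*(r)=0$) is sound, and so is the ``moreover'' part — the ratio coordinates $y_i=r_i/r_0$ do linearize the dynamics because $1=A(0)/A(0)$ is the simple dominant eigenvalue of $A(0)^{-1}L$ (thanks to the standing hypothesis $A(0)>A(i)$ for $i\geq1$), and the rank-one perturbation / matrix-determinant-lemma computation of $\operatorname{spec}DG(\rho^0)$ is correct.

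The gap is in the shift reduction. You reduce the case $j^*(r)=j>0$ to the base case for the shifted fitness $\widetilde A=A(j+\cdot)$, but the base case uses crucially that $\widetilde A(0)$ is the strict maximum of $\widetilde A$ — this is what makes $1$ the dominant eigenvalue of $\widetilde A(0)^{-1}\widetilde L$ and forces $y^n\to v$. That property is \emph{not} inherited by the shift unless $j\in I_A$ (or $A(j)>A(j')$ for all $j'>j$). Concretely, take $K=2$, $A=(5,2,3,1,1,\dots)$ and $a\in(\ln 3,\ln 5)$, so that $I_A=\{0,3\}$, $b_1=0$, $b_2=3$. For $r$ with $j^*(r)=1$ the proposition asserts $G^n(r)\to\rho^{3}=0$, which is true; but $\widetilde A(0)=A(1)=2<3=\widetilde A(1)$, the dominant eigenvalue of $\widetilde A(0)^{-1}\widetilde L$ is $3/2>1$, and the ratio you use satisfies $r^{n+1}_2/r^{n+1}_1=a+(3/2)\,r^n_2/r^n_1\to\infty$, so the linear ratio recursion does not converge and the whole scalar analysis built on it collapses. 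In other words, the ``leading class'' is the wrong normalising coordinate when the leading class is not also the \emph{dominant-fitness} class among $\{j,\dots,K\}$; the leading coordinate is then a \emph{subdominant} direction of the linearisation $e^{-a}L$, and the intended convergence has to be extracted from the dominant one. (Your inequality $\psi_v>(A(0)\exa-1)\sum_i v_i$ and the end-game for the subcritical case are then also built on the wrong $v$.) Note that you do not run into this issue in the ``moreover'' part, because there you only shift by $b\in I_A$, for which $A(b)$ \emph{is} maximal among $\{A(b),\dots,A(K)\}$.

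To repair this you would need either an extra layer of induction (showing that when $A(j)$ is not dominant the mass at class $j$ dies out relative to the higher classes, so that one can pass to the sub-face where the dominant class is leading), or to normalise by the growth of $(e^{-a}L)^n r^0$ and invoke the Perron theory of the lower-triangular matrix $e^{-a}L$ restricted to $\{j,\dots,K\}$ (taking care that the projection of $r^0$ onto the dominant eigenspace is nonzero, which is not automatic since the Perron eigenvector is supported on classes $\geq j^{**}>j$). Either way this is a genuine missing step, not a detail to be ``checked''.

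Two smaller points. In the supercritical scalar step, ``$z_n\not\to0$ whence $z_n\to\rho^0_0$'' needs a sentence: you should argue that the nonautonomous Möbius recursion $z_{n+1}=A(0)\exa z_n/(1+z_n\psi(y^n))$, with coefficients converging to those of a map having a repelling fixed point at $0$ and an attracting one at $z^*$ on the invariant interval, sandwiches $z_n$ between two autonomous orbits converging to $z^*$. And for $b=K+1$, recall that with the paper's convention $k=K$ one has $D_{K+1}=\{0\}$, so the contraction statement there is vacuous; your Poisson-shift remark applies to the setting $k>K$.
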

Consider for example the following fitness function:
$$A(0)\,=\,5\,,\quad
A(1)\,=\,2\,,\quad
A(2)\,=\,4\,,\quad
A(3)\,=\,A(4)\,=\,\cdots\,=\,1\,.$$
In this case $K=2$.
Suppose further that $a$ is such that $4\exa>1>2\exa$.
Then, the mapping $G=(G_0,G_1,G_2,)$ has three fixed points 
in the set $\cD=\lbrace\,
r\in\R^3:r_0,r_1,r_2\geq0\ \text{and}\ r_0+r_1+r_2\leq 1
\,\rbrace$.
The point $0$ is always a fixed point,
and in this case, its basin of attraction is just $\lbrace\,0\,\rbrace$.
We have two other fixed points, $\rho^0$ and $\rho^2$.
The basin of attraction of $\rho^2$ is the set 
$\lbrace\,r\in\cD:r_0=0\,\rbrace\setminus\lbrace\,0\,\rbrace$,
and the basin of attraction of $\rho^0$ is the set $\lbrace\,r\in\cD:r_0>0\,\rbrace$.
In fact, if $A(0)\exa>1$, the fixed point 
$\rho^0$ always exists, and its basin of attraction is always the set
$\lbrace\,r\in\cD:r_0>0\,\rbrace$.
Moreover,
the mapping $G$ is contracting in a small enough neighborhood of $\rho^0$.

\section{Main result}\label{result}
Let us denote by $\mu$ 
the invariant probability measure of the 
process $\Ot$.
For any $0\leq k\leq l$ we denote
by $\pi_k$ the mapping $\R^{l+1}\to\R^{k+1}$ that
keeps the first $k+1$ coordinates, i.e.
$$\forall\,x\in\R^{l+1}\qquad \pi_k(x)\,=\,(x_0,\dots,x_k)\,.$$
For $k\geq 0$,
let us denote by $\nu_k$ the image of the measure $\mu$ through 
the mapping $o\mapsto \pi_k(o/m)$.
%
\begin{theorem}\label{main}
There exists a function $\psi:\,]0,+\infty[\,\to[0,+\infty[\,$, which is finite on $\,]0,\ln A(0)[\,$ and vanishes
on $[\ln A(0),+\infty[\,$,
such that:

$\bullet$ If $\a\psi(a)<\ln\k$, then,
for every $k\geq 0$, the measure $\nu_k$
converges weakly to the measure $\d_0$, i.e.,
$$
\lim_\lmq\,\nu_k\to\d_0\,.
$$

$\bullet$ If $\a\psi(a)>\ln\k$, then,
for every $k\geq0$,
the measure $\nu_k$ converges weakly to the measure $\d_{(\rho^0_0,\dots,\rho^0_k)}$, i.e.,
$$
\lim_\lmq\,\nu_k\to\d_{(\rho^0_0,\dots,\rho^0_k)}\,.
$$
\end{theorem}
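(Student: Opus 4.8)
The plan is to combine the two asymptotic phenomena described after the definition of the occupancy process: the concentration of the multinomial transitions around the map $F$ (and hence around $G$ on the first coordinates), and a large--deviation estimate for the number of generations a lineage spends away from the master sequence. The key point is that the invariant measure $\mu$ is a balance between two competing effects. On one hand, the dynamical system $G$ pushes the normalized occupancy towards the fixed point $\rho^0$, provided the zeroth coordinate stays positive. On the other hand, once the master class becomes empty, the mutation mechanism (which is upper--triangular in the limit $M_\infty$) can never refill it, so the only way a positive proportion of class $0$ is maintained at equilibrium is through rare back--mutations for finite $\ell$; the rate at which the master class is lost is governed by a combinatorial/entropic competition between the reproductive advantage $A(0)$ and the number $\kappa^\ell$ of sequences. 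The function $\psi$ encodes exactly this competition: $\a\psi(a)$ is the exponential rate (per unit $\ell$) at which the master class regenerates, and $\ln\kappa$ is the rate at which it is diluted.

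First I would make the random perturbation picture rigorous. Fix $k\ge K$ and work with the first $k+1$ coordinates $(O_n/m)$, which by the transition formula form, conditionally on the past, a multinomial of parameters $m$ and (the first $k+1$ coordinates of) $F(O_n/m)$. Using $m/\ell\to\a$, $\ell q\to a$ and Proposition~\ref{Propmut} (convergence of $M$ to $M_\infty$), I would show that on any fixed time horizon the process shadows the iterates of $G$ with probability tending to $1$, with explicit exponential control on the deviations via a Bernstein/Azuma bound for multinomials. By Proposition~\ref{convds}, started from any point with $r_0>0$ the iterates of $G$ converge to $\rho^0$, and by the last assertion of that proposition $G$ is contracting near $\rho^0$ inside $D_0=\cD$; this gives a local attractivity statement for the Markov chain near $(\rho^0_0,\dots,\rho^0_k)$.

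Next I would set up the two directions of the dichotomy. For the supercritical case $\a\psi(a)>\ln\kappa$: I would prove that from a typical configuration the master class $O_n(0)$ reaches a level of order $m$ within $O(\ell)$ generations and, once there, stays of order $m$ for a time that is super-polynomially long in $\ell$ — long enough that, together with the contraction near $\rho^0$, the time-average of $\pi_k(O_n/m)$ concentrates at $(\rho^0_0,\dots,\rho^0_k)$; passing to the invariant measure then gives $\nu_k\to\delta_{(\rho^0_0,\dots,\rho^0_k)}$. For the subcritical case $\a\psi(a)<\ln\kappa$: I would show the reverse, namely that the master class, even if present, is lost in $O(\ell)$ generations with overwhelming probability and that the expected return time to a configuration with $O_n(0)\ge \epsilon m$ is astronomically large, so the invariant measure gives vanishing mass to $\{o:o(0)\ge \epsilon m\}$; combined with the fact that once class $0,\dots,b-1$ are empty the limiting $G$-dynamics drive classes $0,\dots,k$ to $0$ (the $\rho^{K+1}\equiv 0$ fixed point, reachable in the limit from $D_{K+1}$), this yields $\nu_k\to\delta_0$. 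The construction of $\psi$ itself I would obtain from the sharp--peak analysis of \cite{CerfWF,DWF}: there $\psi$ appears as the rate function governing the persistence time of the master sequence, and the eventually--constant, class--dependent refinement only changes it through the factors $A(0),\dots,A(K)$ near the peak, so $\psi$ is finite on $]0,\ln A(0)[$ and $0$ on $[\ln A(0),\infty[$, the latter being the regime $A(0)e^{-a}\le 1$ in which $\rho^0$ does not even exist.

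The main obstacle, I expect, is the uniform control over the \emph{unbounded} time horizons needed to go from finite-horizon shadowing to a statement about the invariant measure. Concentration of the multinomial step is easy for a bounded number of generations, but to conclude about $\mu$ one must rule out rare excursions that, over exponentially many generations, could carry mass away from (or towards) $\rho^0$; this requires a renewal/return-time argument showing that the ``bad'' set is visited with the right asymptotic frequency, and it is here that the precise exponential rates $\a\psi(a)$ versus $\ln\kappa$ must be matched — the quantitative heart of the theorem and the step most sensitive to the class-dependence of the fitness near the peak.
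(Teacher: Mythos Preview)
Your overall architecture --- two competing exponential timescales and a renewal comparison under the invariant measure --- matches the paper's. But several load-bearing pieces are either missing or misidentified.

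First, the roles of $\psi$ and $\ln\kappa$ are essentially swapped in your heuristic. In the paper, $\psi(a)$ is \emph{defined} as the Freidlin--Wentzell quasipotential $V(\rho^0,0)$ built from the large-deviation rate function $V_1(r,t)=I_K(G(r),t)$ of the multinomial transitions; it governs the \emph{persistence} time of the non-neutral phase, $E(\tau_0)\asymp e^{m\psi(a)}$. The quantity $\kappa^\ell$ is the expected \emph{discovery} time in the neutral phase, i.e.\ the time to hit $\cW_K$ starting from $\cN_K$, imported from the neutral analysis in~\cite{CerfWF}. The dichotomy is then simply which of the two sojourns dominates, and the ergodic theorem converts this into the statement about $\nu_k$. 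Your description (``$\alpha\psi(a)$ is the rate at which the master class regenerates, $\ln\kappa$ the rate at which it is diluted'') would lead you to prove the wrong estimates on the wrong objects.

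Second, Bernstein/Azuma concentration is not enough. To \emph{define} $\psi$ and to obtain matching upper and lower bounds on the persistence time, the paper proves a genuine LDP for the $l$-step transitions (Proposition~\ref{pgdtrans1}, Corollary~\ref{pgdtransl}) and then runs the Freidlin--Wentzell program: bounding the time spent away from fixed points, computing the exit cost from a neighborhood of $\rho^0$, and showing concentration near $\rho^0$ up to time $e^{m(V-\varepsilon)}$. A sub-Gaussian tail bound gives you shadowing on bounded horizons but not the sharp exponential rate $V(\rho^0,0)$ that the threshold $\alpha\psi(a)=\ln\kappa$ requires.

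Third, you skip what the paper flags as the most delicate step (Section~\ref{ems}): showing that starting from a \emph{single} individual in classes $0,\dots,K$, the process builds up $\gamma m$ master sequences within $O(\ln m)$ steps with probability at least $e^{-\varepsilon m}$. Here the LDP is useless because the starting point has mass $O(1/m)$ in class $0$; the paper constructs an explicit deterministic trajectory through the intermediate fixed points $\rho^b$ and controls the probability of following it by hand. Without this, you cannot close the renewal argument in the supercritical case: you need it both for the lower bound on the persistence time and for the upper bound on the time the process spends outside $U_\delta(\rho^0)$ between successive visits.
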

In terms of the occupancy process $\Ot$, 
the above result can be restated as follows.
\begin{corollary}
We have the following dichotomy:

$\bullet$ If $\a\psi(a)<\ln\k$ then
$$\forall\,k\geq0\qquad
\lim_\lmq\lim_{n\to\infty}\,
E\bigg(
\frac{O_n(k)}{m}
\bigg)\,=\,0\,.$$
$\bullet$ If $\a\psi(a)>\ln\k$ then
$$\forall\,k\geq0\qquad
\lim_\lmq\lim_{n\to\infty}\,
E\bigg(
\frac{O_n(k)}{m}
\bigg)\,=\,\rho^0_k\,.$$
Moreover, in both cases,
$$\forall\,k\geq0\qquad
\lim_\lmq\lim_{n\to\infty}\,
Var\bigg(
\frac{O_n(k)}{m}
\bigg)\,=\,0\,.
$$
\end{corollary}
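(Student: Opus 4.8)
The plan is to transfer the dichotomy established for the measures $\nu_k$ in Theorem~\ref{main} to the asserted statements about the time-stationary expectations and variances of $O_n(k)/m$. The key observation is that, by definition, $\nu_k$ is the law of $\pi_k(O_\infty/m)$ where $O_\infty$ is distributed according to the invariant measure $\mu$; hence, by stationarity, for every $n$ and every bounded continuous $f:\R^{k+1}\to\R$ we have $E\big(f(\pi_k(O_n/m))\big)=\int f\,d\nu_k$ once the chain is started from $\mu$, and $\lim_{n\to\infty}E\big(f(\pi_k(O_n/m))\big)=\int f\,d\nu_k$ for an arbitrary starting point, since $\Ot$ is an irreducible aperiodic Markov chain on the finite state space $\pml$ and therefore converges to its unique invariant measure. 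This last fact — irreducibility and aperiodicity of the occupancy chain for fixed $\ell,m,q$ — should be recorded or cited (it follows from the expression of $p_O$, since $q\in\,]0,1[\,$ makes every transition reachable), so that $\lim_{n\to\infty}$ of the expectation is unambiguously the $\nu_k$-integral.

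Next I would apply weak convergence with the specific test functions. For the first coordinate choices, take $f(x)=x_j$ for $0\le j\le k$; since $\pml/m$ lies in the compact simplex $\cS^\ell$, these coordinate maps are bounded continuous, so weak convergence $\nu_k\to\d_0$ (resp.\ $\nu_k\to\d_{(\rho^0_0,\dots,\rho^0_k)}$) yields $\int x_j\,d\nu_k\to 0$ (resp.\ $\to\rho^0_j$). Composing with the two limits — first $n\to\infty$ inside the fixed-parameter chain, then the asymptotic regime $\lmq$ — gives exactly the two displayed formulas for $\lim_\lmq\lim_{n\to\infty}E(O_n(k)/m)$. For the variance, take $f(x)=x_j^2$: again bounded continuous on the simplex, so $\int x_j^2\,d\nu_k$ converges to $0^2=0$ in the subcritical case and to $(\rho^0_j)^2$ in the supercritical case; combining with the limit of $\int x_j\,d\nu_k$ and $Var=\int x_j^2\,d\nu_k-\big(\int x_j\,d\nu_k\big)^2$ shows the variance tends to $0-0=0$ in the first case and $(\rho^0_j)^2-(\rho^0_j)^2=0$ in the second. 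This establishes the "moreover" part in both regimes simultaneously.

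The one point requiring a little care — and the main (mild) obstacle — is the interchange of the two limits, i.e.\ justifying that $\lim_\lmq\lim_{n\to\infty}$ is the right order and that no issue arises from the fact that the state space $\pml$ itself changes with $\ell$ and $m$. Here the order is favorable: for each fixed $(\ell,m,q)$ the inner limit $n\to\infty$ is the honest ergodic limit producing the genuine object $\int x_j\,d\nu_k$ (a number depending on $\ell,m,q$), and only then do we let the parameters run along the regime; Theorem~\ref{main} is precisely a statement about that sequence of numbers, so no uniformity in $n$ is needed and no diagonal extraction is involved. Thus the corollary is essentially a reformulation of Theorem~\ref{main} via bounded continuous test functions, and the proof reduces to (i) citing ergodicity of the finite occupancy chain to identify $\lim_{n\to\infty}E(\cdot)$ with the $\nu_k$-average, and (ii) feeding the coordinate and squared-coordinate functions into the weak convergence supplied by the theorem.
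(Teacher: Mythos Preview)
Your proposal is correct and is exactly the intended route: the paper states the Corollary immediately after Theorem~\ref{main} without a separate proof, treating it as a direct reformulation, and your argument (ergodicity of the finite irreducible aperiodic occupancy chain to identify $\lim_{n\to\infty}E(\cdot)$ with the $\nu_k$-integral, then weak convergence against the bounded continuous test functions $x_j$ and $x_j^2$) is precisely how one extracts the moment statements from the weak convergence in the theorem.
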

The remaining section are devoted to the proof of the theorem;
let us now give a sketch of this proof.
Recall that the occupancy process can be seen as a random perturbation
of the discrete--time dynamical system associated to $G$.
When $A(0)\exa\leq 1$, the mapping $G$ has 0 as its only fixed point,
and the result readily follows. 
When $A(0)\exa>1$ the behavior of the process is much more intricate.
First, we need to differentiate between two very different regimes:
the neutral and non--neutral phases.
The neutral phase consists of the populations where none of the classes $0,\dots,K$
are present. The process will then explore the set of those populations 
until it finds an individual in one of the classes $0,\dots,K$.
While exploring the set of the neutral populations, 
there is no selection, and the process behaves the same as if the function
$A$ were constant. In order to study this phase we will rely on the results found in~\cite{CerfWF}
for the sharp--peak landscape, and we will show in section~\ref{nphase} 
that the mean time needed to exit
the neutral phase is of the order of $\k^\ell$.
The non--neutral phase consists of the populations where at least one of the classes
$0,\dots,K$ is present. 
In the set of non--neutral populations, the process $\Ot$ will tend to behave
as the dynamical system associated to $G$, this fact will be rigorously stated
thanks to a large deviations principle, which we develop in section~\ref{LDtrans}.
Inspired by the theory of Freidlin and Wentzell for random perturbations
of dynamical systems~\cite{FW}, we exploit the large deviations principle
in order to control several quantities associated with the process $\Ot$,

$\bullet$ We show that the process is very unlikely to stay away 
from a neighborhood of all of the fixed points for a long time (section~\ref{awayfp}).

$\bullet$ We show that the process enters the basin of attraction of 
the main fixed point $\rho^0$ in a few steps with reasonable probability.
In fact, this is one of the most technical parts of the proof,
since the large deviations principle is of little help. Indeed, we need to
control the probability for the process to create $\eta m$ master sequences
out of $1$ master sequence, for some $\eta>0$ (section~\ref{ems}).

$\bullet$ We estimate the mean time that the process needs to exit 
the set of non--neutral populations, which turns out to be of the order of $e^{m\psi(a)}$.
The function $\psi(a)$ represents the quasipotential linking the points $\rho^0$ and $0$,
or otherwise stated, the ``energy'' of the most likely path the process is to follow
when going from $\rho^0$ to $0$ (section~\ref{Pertime}).

$\bullet$ We show that when inside the set of non--neutral populations,
the process spends most of its time in a neighborhood of $\rho^0$ (section~\ref{Conc}).

Finally, we put all the above estimates together, and
we use them to show the main theorem, with help of the ergodic 
theorem for Markov chains (section~\ref{super} and~\ref{sub}).
The case $K=0$ corresponds to the sharp--peak landscape,
and has been treated in~\cite{CerfWF,DWF}.
The generalization to the class--dependent case is not straightforward.
Indeed, the proofs in~\cite{CerfWF,DWF} rely strongly on 
coupling and monotonicity arguments, which cease to work for arbitrary 
class--dependent functions. In addition, the behavior of 
the dynamical system associated to $G$ is richer;
in the sharp peak landscape, the only possible fixed points 
are $\rho^0$ and $0$, while for more general fitness functions
intermediate fixed points appear.
The new proofs rely on finding estimates that are uniform 
with respect to the initial points, and are therefore more robust
than the original proofs in~\cite{CerfWF,DWF}.

Since
our aim is to send the length of the sequences $\ell$ to infinity,
the number of coordinates of the occupancy process
will grow to infinity with $\ell$. In order to deal with this
inconvenience, we will truncate the process $\Ot$ so that
the number of coordinates is fixed.
Throughout the rest of the section we fix $k$ 
to be an integer larger or equal to $K$.
We define the truncated
process
$\Zt$ by setting
$$\forall\, n\geq 0\qquad
Z_n=\pi_k\big(
O_n
\big)\,.$$
The process $\Zt$ takes values in the set 
$$\dD^k\,=\,\big\lbrace\,
z\in\N^{k+1}:z_0+\cdots+z_k\leq m
\,\big\rbrace\,.$$
The process $\Zt$ is not Markovian, since the coordinates that 
we are leaving out in its definition cannot be ignored when computing
the transition probabilities of the process $\Zt$. Indeed, for any 
$o\in\pml$, $z\in\dD^k$ and $n\geq0$ we have
\begin{multline*}
P\big(
Z_{n+1}=z\,\big|\,O_n=o
\big)\,=\\
\frac{m!}{z_1!\cdots z_k!(m-|z|_1)!}
F_0(o/m)^{z_0}\cdots
F_k(o/m)^{z_k}(1-|\pi_k(F(o/m))|_1)^{m-|z|_1}\,.
\end{multline*}
However, in the asymptotic regime we consider, 
the process $\Zt$ behaves as a small random perturbation of the dynamical system
associated to the mapping $G$, and therefore,
the process $\Zt$ can be seen as being ``asymptotically Markovian''.
We will start by developing a large deviations principle for the transition probabilities 
of the process $\Zt$ in the next section.
In most subsequent sections the process $\Zt$ will be the main
object of our study.
We develop next a large deviations principle for the transition 
probabilities of the process $\Zt$.

\textbf{Notation.}
In defining $\Zt$ we have fixed a coordinate $k\geq K$, but since the treatment is the same for all $k\geq K$,
in the sequel,
we assume that $k=K$.
We will also denote the sets $\dD^K$ and $\cD^K$ simply by $\dD$ and $\cD$,
and the mapping $\pi_K$ by $\pi$.

\section{Large deviations principle}\label{LDtrans}
For
$p,t\in\cD$, 
we define the quantity
$I_K(p,t)$
as follows:
%
$$I_K(p,t)\,=\,
\sum_{k=0}^K t_k\ln\frac{t_k}{p_k}+
(1-|t|_1)
\ln\frac{1-|t|_1}{1-|p|_1}\,,$$
%
%
We make the convention that 
$0\ln0=0\ln(0/0)=0$.
The function
$I_K(p,\cdot)$
is the rate function
governing the large deviations of a multinomial distribution
with parameters $n$ and $p_0,\dots,p_K,1-|p|_1$.
We have the following estimate for the multinomial coefficients:

\begin{lemma}\label{ldmultinom}
Let
$n\geq N\geq1$,
and
$i_1,\dots,i_N\in \N$ be 
such that
$i_1+\cdots+i_N=n$.
We have
$$\Bigg|
\ln\frac{n!}{i_1!\cdots i_N!}
+\sum_{k=1}^N i_k\ln\frac{i_k}{n}
\Bigg|\,\leq\,
N\ln n+2N\,.$$
\end{lemma}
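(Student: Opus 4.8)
The plan is to use the classical Stirling-type bound and track the error terms carefully, handling the factors that can vanish (i.e. when some $i_k = 0$) by absorbing them into the additive constant. Recall the standard estimate $\ln j! = j\ln j - j + r_j$ with $0 \le r_j \le \ln j + 1$ for $j \ge 1$, which follows from comparing $\ln j!$ with $\int_1^j \ln t\,dt = j\ln j - j + 1$ and noting that the sum of the areas of the correcting triangles is bounded by $\frac12 \ln j$ plus a constant, so in fact $|r_j| \le \ln j + 1$ comfortably. (For $j = 0$ we simply set $0\ln 0 = 0$ and note $\ln 0! = 0$, consistent with the convention.)

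First I would write, for those $k$ with $i_k \ge 1$,
$$\ln i_k! \,=\, i_k \ln i_k - i_k + r_{i_k}, \qquad 0 \le r_{i_k} \le \ln i_k + 1 \le \ln n + 1,$$
and likewise $\ln n! = n\ln n - n + r_n$ with $0 \le r_n \le \ln n + 1$. Summing, and using $i_1 + \cdots + i_N = n$ so that the linear terms $-i_k$ sum to $-n$ and cancel against $-n$ from $\ln n!$, I get
$$\ln\frac{n!}{i_1!\cdots i_N!} \,=\, n\ln n - \sum_{k=1}^N i_k \ln i_k + r_n - \sum_{k\,:\,i_k\ge 1} r_{i_k}.$$
Now I rewrite $n\ln n - \sum_k i_k\ln i_k = -\sum_k i_k \ln(i_k/n)$, where the terms with $i_k = 0$ contribute $0$ by convention; hence
$$\ln\frac{n!}{i_1!\cdots i_N!} + \sum_{k=1}^N i_k\ln\frac{i_k}{n} \,=\, r_n - \sum_{k\,:\,i_k\ge 1} r_{i_k}.$$

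It then remains to bound the right-hand side. There are at most $N$ indices $k$ with $i_k \ge 1$, and each $r_{i_k}$ lies in $[0, \ln n + 1]$, while $r_n \in [0,\ln n + 1]$. Therefore the absolute value of $r_n - \sum r_{i_k}$ is at most $\max(r_n, \sum r_{i_k}) \le N(\ln n + 1) \le N\ln n + N$, which is even sharper than the claimed bound $N\ln n + 2N$; the extra slack absorbs the case-handling when $N \ge n$ is excluded (here $n \ge N$) and any mild imprecision in the Stirling constant one prefers to use. This completes the proof.

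The only mildly delicate point — and the one I would be most careful about — is the bookkeeping when some $i_k$ vanish: one must make sure that the terms dropped from $\sum_{k\,:\,i_k\ge1} r_{i_k}$ are exactly matched by terms $i_k\ln(i_k/n) = 0$ dropped on the left, which is immediate from the stated convention $0\ln 0 = 0$. Everything else is a routine application of the integral comparison for $\ln j!$, and no structural difficulty arises.
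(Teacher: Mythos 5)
Your proof is correct. The paper itself does not reproduce a proof; it simply points to Lemma~7.1 of~\cite{CerfWF}, which is precisely the kind of elementary Stirling estimate you give. Your derivation is clean and self-contained: the identity
$$\ln\frac{n!}{i_1!\cdots i_N!} + \sum_{k=1}^N i_k\ln\frac{i_k}{n} \,=\, r_n - \sum_{k\,:\,i_k\ge1} r_{i_k}$$
is exact once one writes $\ln j! = j\ln j - j + r_j$ with $r_0 = 0$ and uses $i_1+\cdots+i_N=n$ to cancel the linear and $n\ln n$ terms, and the bookkeeping for $i_k=0$ is indeed immediate from the convention $0\ln 0 = 0$. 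The integral comparison $\int_1^j\ln t\,dt \le \ln j! \le \ln j + \int_1^j\ln t\,dt$ gives $1 \le r_j \le \ln j + 1$ for $j\ge 1$; since each $i_k\le n$ there are at most $N$ nonvanishing terms $r_{i_k}$, each at most $\ln n + 1$, and $|r_n - \sum r_{i_k}|\le\max\big(r_n,\sum r_{i_k}\big) \le N(\ln n + 1) = N\ln n + N$, which is even a little better than the stated $N\ln n + 2N$. The one sentence that is slightly hand-wavy is the appeal to ``the sum of the areas of the correcting triangles''; it is unnecessary, since the simple one-sided integral comparison already gives the required bounds $0\le r_j\le\ln j+1$ directly, so you might as well state it that way.
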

The proof is similar to that of lemma~7.1 of~\cite{CerfWF}.
\color{black}
Thanks to the lemma, for $o\in\pml$ and $z\in\dD$
$$
\ln P\big(
Z_{n+1}=z\,\big|\,
O_n=o
\big)\,=\,-mI_K\Big(\pi\big(F(o/m)\big),z/m\Big)+\Phi(o,z).
$$
The error term
$\Phi(o,z)$
satisfies, for $m$ large enough,
$$\forall\, o\in\pml\quad \forall\, z\in\dD\qquad
\big|\Phi(o,z)\big|\,\leq\, C(K)\ln m\,,$$
where
$C(K)$
is a constant that depends on $K$ but not on $m$.
We define a function
$V_1:\cD\times\cD\to [0,\infty]$
by setting, for
$r,t\in\cD$,
$$V_1(r,t)\,=\,
I_K\big(
G(r),t
\big)
\,.$$ 
For $x\in\cS^\ell$ and $t\in\cD$, we have,
$$\lim_{\genfrac{}{}{0pt}{1}{\ell\to\infty,\,q\to0}{\ell q\to a}}
I_K\Big(\pi\big(F(x) \big),t\Big)\,=\,V_1(\pi(x),t)\,.$$
\textbf{Notation.} For a subset $A$ of $\cD$,
we denote by $\A$ the set $mA\cap\dD$.
For $r\in\R^{K+1}$,
we denote by $\lfloor r \rfloor$
the vector 
$\lfloor r\rfloor=(\lfloor r_0\rfloor,\dots,\lfloor r_K\rfloor)$.
\begin{proposition}\label{pgdtrans1}
The one step transition probabilities of the Markov chain  
$\Zt$
verify the large deviations principle governed by
$V_1$:

$\bullet$ For any subset $U$ of $\cD$ and for any $r\in\cD$,
we have, for $n\geq 0$,
$$
-\inf\big\lbrace\,
V_1(r,t):t\in \uro
\,\big\rbrace\,
\leq\,\liminf_{\genfrac{}{}{0pt}{1}{\ell,m\to\infty,\,q\to 0}{{\ell q} \to a}}
\frac{1}{m}\ln P\big( Z_{n+1}\in \dU \,\big|\, Z_n=\lfloor mr  \rfloor \big)\,.
$$

$\bullet$ For any subsets $U,U'$ of $\cD$,
we have, for $n\geq 0$,
\begin{multline*}
\hspace*{30 pt}\limsup_{\genfrac{}{}{0pt}{1}{\ell,m\to\infty,\,q\to 0}{{\ell q} \to a}}
\frac{1}{m}\ln\, \sup_{z\in \dU} P\big( Z_{n+1}\in \dU' \,\big|\, Z_n=z \big)
\\\leq\,
-\inf\big\lbrace\,
V_1(r,t):
r\in \overline{U},\,
t\in \overline{U}'
\,\big\rbrace\,.\hspace*{40 pt}
\end{multline*}
\end{proposition}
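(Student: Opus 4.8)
The plan is to derive both bounds directly from the non-asymptotic identity
$\ln P(Z_{n+1}=z\mid O_n=o)=-mI_K(\pi(F(o/m)),z/m)+\Phi(o,z)$ with $|\Phi(o,z)|\le C(K)\ln m$,
combined with the uniform convergence $I_K(\pi(F(x)),t)\to V_1(\pi(x),t)$ and continuity of $V_1$.
The key point to keep in mind is that $Z_n=\lfloor mr\rfloor$ only fixes the first $K+1$ coordinates of $O_n$;
the transition probability for $Z_{n+1}$ depends on $o$ only through $F(o/m)$, and the convergence statement
above holds for \emph{any} $x\in\cS^\ell$ with $\pi(x)$ near $r$, so the leftover coordinates do not spoil anything.
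First I would record the crude bound, valid for every $o$ with $\pi(o)=\lfloor mr\rfloor$ and every $z$,
$$
-mI_K\big(\pi(F(o/m)),z/m\big)-C(K)\ln m\,\le\,\ln P(Z_{n+1}=z\mid O_n=o)\,\le\,-mI_K\big(\pi(F(o/m)),z/m\big)+C(K)\ln m,
$$
and note that $P(Z_{n+1}\in\dU\mid Z_n=\lfloor mr\rfloor)$ is a convex combination (over the conditional law of the
discarded coordinates of $O_n$) of the quantities $P(Z_{n+1}\in\dU\mid O_n=o)$.

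\textbf{Lower bound.} Fix $r\in\cD$ and $U\subseteq\cD$; I may assume $\uro\ne\varnothing$, otherwise the infimum is $+\infty$ and there is nothing to prove.
For a target $t\in U$ with $V_1(r,t)$ close to the infimum and $t$ in the interior enough that $mt$ can be approximated by a lattice point $z^*\in\uro$ with $z^*/m\to t$,
restrict the sum defining $P(Z_{n+1}\in\dU\mid Z_n=\lfloor mr\rfloor)$ to the single outcome $z^*$.
Averaging over the conditional law of $O_n$, and using that $\pi(F(o/m))\to G(r)$ uniformly over all admissible $o$ (this is exactly the content of the displayed limit, since $F$ depends on the tail coordinates only through $\phi$, which is controlled by $\pi(x)$),
one gets $\tfrac1m\ln P(Z_{n+1}\in\dU\mid Z_n=\lfloor mr\rfloor)\ge -I_K(G(r),z^*/m)-C(K)\tfrac{\ln m}{m}\to -V_1(r,t)$.
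Taking the supremum over such $t$ and invoking lower semicontinuity of $I_K(G(r),\cdot)$ yields the claimed bound with $\uro$ on the right.

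\textbf{Upper bound.} Fix subsets $U,U'$. For $z\in\dU$ write $r:=z/m\in\overline U$ (up to the floor, which is harmless).
Bounding $P(Z_{n+1}\in\dU'\mid Z_n=z)$ by $|\dD|$ times its largest term, and using $|\dD|\le (m+1)^{K+1}$, gives
$$
\tfrac1m\ln P(Z_{n+1}\in\dU'\mid Z_n=z)\,\le\,-\inf_{t\in\dU'}I_K\big(\pi(F(o/m)),t/m\big)+C(K)\tfrac{\ln m}{m}+(K+1)\tfrac{\ln(m+1)}{m},
$$
uniformly over the relevant $o$. Now take $\sup_{z\in\dU}$ and then $\limsup$; by uniform convergence $I_K(\pi(F(o/m)),t/m)\to V_1(r,t)$ with $r\in\overline U$,
and by compactness of $\overline U\times\overline U'$ together with joint lower semicontinuity of $V_1$, the right-hand side converges to $-\inf\{V_1(r,t):r\in\overline U,\ t\in\overline U'\}$. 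The polynomial prefactor $|\dD|$ contributes $O(\ln m/m)\to0$ and disappears.

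\textbf{Main obstacle.} The routine parts are the counting bound on $|\dD|$ and the floor/ceiling bookkeeping needed to replace real targets $mt$ by genuine lattice points of $\uro$ without losing more than $o(m)$ in the exponent; these need care near the boundary of the simplex, where some coordinate of $t$ vanishes and $I_K$ can be infinite, but the convention $0\ln 0=0$ and an approximation-from-the-interior argument handle it. The genuinely delicate step is making the convergence $I_K(\pi(F(o/m)),t/m)\to V_1(r,t)$ \emph{uniform in the discarded tail coordinates of $o$} — i.e.\ checking that the limit of the first $K+1$ coordinates of $F$ toward $G$ (stated earlier in the excerpt via the mutation-matrix asymptotics $M\to M_\infty$ and the fact that $\phi$ only involves classes $0,\dots,K$) is indeed uniform over all $o\in\pml$ compatible with $Z_n=\lfloor mr\rfloor$, so that the average over those tail coordinates cannot create a different exponential rate. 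Once that uniformity is in hand, both inequalities follow by the elementary envelope arguments above.
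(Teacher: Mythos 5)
Your overall strategy mirrors the paper's: sandwich the transition probability between quantities controlled by $I_K\big(\pi(F(o/m)),\cdot\big)$ up to a $C(K)\ln m$ error, reduce the upper bound to the largest term of a polynomial-size sum, restrict the lower bound to a single near-optimal lattice point, and pass to the limit. You have also correctly identified where the difficulty lies: since $Z_n=\lfloor mr\rfloor$ only pins down the first $K+1$ coordinates of $O_n$, you need the convergence $I_K\big(\pi(F(o/m)),t\big)\to V_1(r,t)$ to be uniform over all tails of $o$ compatible with $\lfloor mr\rfloor$.

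However, your brief justification of that uniformity --- ``$F$ depends on the tail coordinates only through $\phi$'' --- is false. The denominator $\phi$ does depend only on $\pi(x)$, but the numerator of $F_k$, namely $\sum_{h=0}^{\ell}x_hA(h)M(h,k)$, involves $x_h$ for every $h>K$: for $h>k$ the back-mutation probability $M(h,k)$ is strictly positive, so distinct tail configurations of $o$ compatible with the same $\lfloor mr\rfloor$ give genuinely different values of $\pi(F(o/m))$. What saves the argument is that, asymptotically, $M(h,k)\leq M(k+1,k)\to 0$ for $h>k$, so the tail contribution vanishes \emph{uniformly}. The paper makes this precise by introducing sandwich maps $\underline{F},\overline{F}:\cD\to\cD$ (each a function of $\pi(x)$ only) with $\underline{F}_k(\pi(x))\leq F_k(x)\leq\overline{F}_k(\pi(x))$ asymptotically, and the corresponding functionals $\underline{V}\leq I_K(\pi(F(\cdot)),\cdot)\leq\overline{V}$, both of which depend only on the truncated coordinates and converge to $V_1$. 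You flag exactly this as the ``genuinely delicate step'' but do not carry it out; that is a genuine gap, and it is precisely the piece the paper's $\underline{F},\overline{F}$ construction supplies.
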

\begin{proof}
We begin by showing the large deviations upper bound.
Let $U,U'$
be two subsets of
$\cD$ and notice that, for all $z\in\dD$ and $n\geq0$
$$P\big(
Z_{n+1}\in \dU'\,\big|\, Z_n=z
\big)\,\leq\,
\sup_{o:\pi(o)=z}P\big(
Z_{n+1}\in  \dU'\,\big|\,O_n=o
\big)\,.$$
Let $o\in\pml$ be such that $\pi(o)\in \dU$.
For 
$n\geq 0$, we have
$$
P\big(Z_{n+1}\in \dU'
\,\big|\, O_n=o\big)\,=\,
\sum_{z'\in \dU'} P\big(
Z_{n+1}=z'\,\big|\, O_n=o
\big)\,.
$$
The number of elements in the sum is of polynomial order in $m$,
the exponent of $m$ depending on $K$ only.
Thus,
thanks to the above estimates on 
the transition probabilities for the process $\Zt$,
we have, for $m$ large enough,
\begin{multline*}
\sup_{o:\pi(o)\in \dU}P\big(Z_{n+1}\in \dU'
\,|\, O_n=o\big)
\,\leq\,
m^{C(K)}\sup_{\genfrac{}{}{0pt}{1}{o:\pi(o)\in \dU}
{z'\in \dU'}}
P\big(
Z_{n+1}=z'\,\big|\,O_n=o
\big)\\
\leq\,m^{C'(K)}\exp\bigg(
-m\min_{\genfrac{}{}{0pt}{1}{o:\pi(o)\in \dU}
{z'\in \dU'}}
I_K\Big(
\pi\big(F(o/m)\big),z'/m
\Big)
\bigg)\,.
\end{multline*}
where
$C(K)$  and $C'(K)$
are constants that depend on
$K$ 
but not on 
$m$.
Define the mappings $\underline{F},\overline{F}:\cD\lra\cD$ by
setting, for all $r\in\cD$ and $k\in\lbrace\,0,\dots,K\,\rbrace$
\begin{align*}
\underline{F}_k(r)\,&=\,\phi(r)^{-1}\sum_{0\leq i\leq K}
r_i A(i)M(i,k)\,,\\
\overline{F}_k(r)\,&=\,\phi(r)^{-1}\bigg(\sum_{0\leq i\leq K}
r_i A(i)M(i,k)+\big(
1-|r|_1 
\big)M(k+1,k)\bigg)\,.
\end{align*}
Asymptotically,
for $0\leq k<j\leq\ell$,
we have $M(j,k)\leq M(k+1,k)$.
Thus, asymptotically, for all $x$ in the unit simplex $\cS^\ell$,
and for all $k\in\zk$,
$$\underline{F}_k\big(
\pi(x)
\big)\,\leq\,F_k(x)\,\leq\,
\overline{F}_k\big(
\pi(x)
\big)\,.$$
Define next the function $\underline{V}:\cD\times\cD\lra[0,+\infty]$
by
$$\forall\, r,t\in\cD\qquad
\underline{V}(r,t)\,=\,\sum_{i=0}^K t_i\ln\frac{t_i}{\overline{F}_i(r)}
+\big(
1-|t|_1\big)
\ln\frac{1-|t|_1}{1-\big|
\pi(\underline{F}(r))
\big|_1}\,.$$
The function $\underline{V}$ satisfies
$$\forall\, x\in\cS^{\ell}\quad \forall\, t\in\cD\qquad
\underline{V}\big(
\pi(x),t
\big)\,\leq\,I_K\big(
\pi(F(x)),t
\big)\,.$$
Moreover, asymptotically, for $r,t\in\cD$,
$$\underline{V}(r,t)\,\lra\,V_1(r,t)\,.$$
Thus,
\begin{multline*}
\sup_{o:\pi(o)\in \dU}P\big(Z_{n+1}\in \dU'
\,|\, O_n=o\big)
\\\leq\,
(m+1)^{C'(K)}\exp\bigg(
-m\min\bigg\lbrace\,
\underline{V}\Big(
\frac{z}{m},\frac{z'}{m}
\Big):z\in \dU,z'\in \dU'
\bigg\rbrace\,
\bigg)\,.
\end{multline*}
For each
$m\geq 1$,
let
$z_m,z'_m\in\dD$,
be two terms that realize the above minimum.
Up to the extraction of a subsequence,
we can suppose that when
$m\to\infty$,
$$\frac{z_m}{m}\to r\in\overline{U}\,,\ \quad
\frac{z'_m}{m}\to t\in\overline{U}'\,.$$
Thus,
$$
\limsup_{\genfrac{}{}{0pt}{1}{\ell,m\to\infty,\,q\to 0}{{\ell q} \to a}} 
-\underline{V}\Big(
\frac{z_m}{m},
\frac{z_m}{m}
\Big)
\,\leq\,
-V_1(r,t)\,.
$$
Optimizing with respect to $r,t$,
we obtain the upper bound of the large deviations principle.
We show next the lower bound.
Let
$r,t\in\cD$
and
notice that,
for all $z\in\dD$ and $n\geq 0$
$$
P\big(
Z_{n+1}=\lfloor mt\rfloor\,\big|\, Z_n=z
\big)\,\geq\,
\inf_{o:\pi(o)=z}P\big(
Z_{n+1}=\lfloor mt\rfloor\,\big|\,O_n=o
\big)\,.
$$
let $o\in\pml$ be such that $\pi(o)=\lfloor m r\rfloor$. 
We have
$$
P\big(
Z_{n+1}=\lfloor mt\rfloor\,\big|\,
O_n=o
\big)
\geq\, m^{-C(K)}\exp\bigg(
-mI_K\Big(
\pi\big(
F(o/m)
\big),\lfloor mt \rfloor/m
\Big)
\bigg),
$$
where $C(K)$ is a constant depending on $K$ but not on $m$.
Define the function $\overline{V}:\cD\times\cD\lra[0,+\infty]$
by
$$
\forall\, r,t\in\cD\qquad
\overline{V}(r,t)\,=\,\sum_{i=0}^K t_i\ln\frac{t_i}{\underline{F}_i(r)}
+\big(
1-|t|_1\big)
\ln\frac{1-|t|_1}{1-\big|
\pi(\overline{F}(r))
\big|_1}\,.$$
The function $\overline{V}$ satisfies
$$\forall\, x\in\cS^{\ell}\quad \forall\, t\in\cD\qquad
\overline{V}\big(
\pi(x),t
\big)\,\geq\,I_K\big(
\pi(F(x)),t
\big)\,.$$
Moreover, asymptotically, for $r,t\in\cD$,
$$\overline{V}(r,t)\,\lra\,V_1(r,t)\,.$$
Thus, for every $o\in\pml$ such that $\pi(o)\,=\,\lfloor mr\rfloor$,
$$
P\big(Z_{n+1}=\lfloor mt\rfloor
\,|\, O_n=o\big)
\,\geq\,
m^{-C(K)}\exp\bigg(
-m
\overline{V}\bigg(
\frac{\lfloor mr\rfloor}{m},\frac{\lfloor mt\rfloor}{m}
\bigg)
\bigg)\,.
$$
We take the logarithm and we send
$m,\ell$ to $\infty$ and $q$ to $0$.
We obtain then
$$
\liminf_{\genfrac{}{}{0pt}{1}{\ell,m\to\infty,\,q\to 0}{{\ell q} \to a}}
\frac{1}{m}\ln P\big(
Z_{n+1}=\lfloor tm \rfloor
\,\big|\,
Z_n=\lfloor r m \rfloor
\big)
\,\geq\,
-V_1(r,t)\,.
$$
Moreover, if
$t\in\uro$,
for $m$ large enough,
$\lfloor tm \rfloor$ belongs to $\dU$.
Therefore,
$$
\liminf_{\genfrac{}{}{0pt}{1}{\ell,m\to\infty,\,q\to 0}{{\ell q} \to a}}
\frac{1}{m}\ln P\big(
Z_{n+1}\in \dU
\,\big|\,
Z_n=\lfloor m r \rfloor
\big)
\,\geq\,
-V_1(r,t)\,.
$$
We optimize over 
$t$ 
and we obtain the large deviations lower bound.
\end{proof}
A similar proof shows that the $l$--step transition probabilities of $\Zt$
also satisfy a large deviations principle.
For $l\geq 2$,
we define a function 
$V_l$ on $\cD\times\cD$ as follows:
$$
V_l(r,t)
\,=\,
\inf\Big\lbrace\,
\sum_{k=0}^{l-1} V_1(s^k,s^{k+1}):\\
s^0=r,\ s^l=t,\ 
s^k\in\cD
\ \text{ for }\ 0\leq k\leq l
\,\Big\rbrace\,.
$$
\begin{corollary}\label{pgdtransl}
For 
$l\geq 1$,
the $l$--step transition probabilities of
$(Z_n)_{n\geq 0}$
satisfy the large deviations principle governed by
$V_l$:

$\bullet$ For any subset $U$ of $\cD$ and for any $r\in\cD$,
we have, for $n\geq 0$,
$$
-\inf\big\lbrace\,
V_l(r,t):t\in \uro
\,\big\rbrace\,
\leq\,\liminf_{\genfrac{}{}{0pt}{1}{\ell,m\to\infty,\,q\to 0}{{\ell q} \to a}}
\frac{1}{m}\ln P\big( Z_{n+l}\in \dU \,\big|\, Z_n=\lfloor mr  \rfloor \big)\,.
$$

$\bullet$ For any subsets $U,U'$ of $\cD$,
we have, for $n\geq 0$,
\begin{multline*}
\hspace*{30 pt}\limsup_{\genfrac{}{}{0pt}{1}{\ell,m\to\infty,\,q\to 0}{{\ell q} \to a}}
\frac{1}{m}\ln \sup_{z\in \dU} P\big( Z_{n+l}\in \dU' \,\big|\, Z_n=z \big)
\\\leq\,
-\inf\big\lbrace\,
V_l(r,t):
r\in \overline{U},\,
t\in \overline{U}'
\,\big\rbrace\,.\hspace*{40 pt}
\end{multline*}
\end{corollary}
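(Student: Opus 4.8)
The plan is to argue by induction on $l$, the base case $l=1$ being Proposition~\ref{pgdtrans1}. The one subtlety is that $\Zt$ is \emph{not} Markovian, so the $l$--step transition cannot be decomposed directly; however the underlying occupancy process $\Ot$ \emph{is} Markovian, and the estimates of Proposition~\ref{pgdtrans1} are, as its proof shows, uniform over the full state $o$ once $\pi(o)$ is prescribed — the auxiliary functions $\underline{V},\overline{V}$ depend on $o$ only through $\pi(o)$. I would therefore carry along in the induction hypothesis the two \emph{$o$--uniform} statements
$$\limsup_\lmqq\frac{1}{m}\ln\sup_{o:\pi(o)\in\dU}P\big(Z_{n+l}\in\dU'\,\big|\,O_n=o\big)\,\leq\,-\inf\big\lbrace\,V_l(r,t):r\in\overline{U},\,t\in\overline{U}'\,\big\rbrace\,,$$
$$\liminf_\lmqq\frac{1}{m}\ln\inf_{o:\pi(o)=\lfloor mr\rfloor}P\big(Z_{n+l}\in\dU\,\big|\,O_n=o\big)\,\geq\,-\inf\big\lbrace\,V_l(r,t):t\in\uro\,\big\rbrace\,.$$
These imply the corollary, since $P(\,\cdot\mid Z_n=\lfloor mr\rfloor)$ is sandwiched between the $\inf$ over $\{o:\pi(o)=\lfloor mr\rfloor\}$ and the $\sup$ over $\{o:\pi(o)\in\dU\}$, and they chain well with the one--step bounds.

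For the upper bound at stage $l$, fix $U,U'\subseteq\cD$, let $C_1,\dots,C_p$ be a partition of the compact set $\cD$ into pieces of diameter at most $\d$, and write $\widehat{C}_i=mC_i\cap\dD$. For any $o$ with $\pi(o)\in\dU$,
$$P\big(Z_{n+l}\in\dU'\,\big|\,O_n=o\big)\,=\,\sum_{i=1}^{p}P\big(Z_{n+1}\in\widehat{C}_i,\,Z_{n+l}\in\dU'\,\big|\,O_n=o\big)\,,$$
and, by the Markov property of $\Ot$, the $i$--th term is at most $P(Z_{n+1}\in\widehat{C}_i\mid O_n=o)\cdot\sup_{o':\pi(o')\in\widehat{C}_i}P(Z_{n+l}\in\dU'\mid O_{n+1}=o')$. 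Bounding the first factor by the one--step upper bound and the second by the stage $l-1$ hypothesis, and using that the $\limsup$ of a sum of $p$ sequences equals the maximum of their $\limsup$'s, one gets an upper bound of $-\min_{1\leq i\leq p}\big(\inf_{\overline{U}\times\overline{C}_i}V_1+\inf_{\overline{C}_i\times\overline{U}'}V_{l-1}\big)$. It then remains to let $\d\to0$: along a sequence of partitions one extracts convergent subsequences of the points nearly realizing the two inner infima; the two intermediate points lie in the same shrinking cell, hence share a common limit, and the lower semicontinuity of $V_1$ and $V_{l-1}$ together with the identity $V_l(r,t)=\inf_{s\in\cD}\big(V_1(r,s)+V_{l-1}(s,t)\big)$ push the bound down to $-\inf\{V_l(r,t):r\in\overline U,\,t\in\overline U'\}$ in the limit. (That each $V_l$ is lower semicontinuous on the compact set $\cD\times\cD$ also follows from the same induction, starting from the lower semicontinuity of the relative--entropy--type function $I_K$ and the continuity of $G$, and using that lower semicontinuity is preserved under $g\mapsto\inf_{s}\big(V_1(\cdot,s)+g(s,\cdot)\big)$.)

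For the lower bound, fix $r\in\cD$, $U\subseteq\cD$ and $t\in\uro$; we may assume $V_l(r,t)<+\infty$, else there is nothing to prove. Given $\e>0$, choose a chain $r=s^0,s^1,\dots,s^l=t$ in $\cD$ with $\sum_{k=0}^{l-1}V_1(s^k,s^{k+1})\leq V_l(r,t)+\e$. For any $o$ with $\pi(o)=\lfloor mr\rfloor$, the Markov property of $\Ot$ gives
$$P\big(Z_{n+l}\in\dU\,\big|\,O_n=o\big)\,\geq\,P\big(Z_{n+1}=\lfloor ms^1\rfloor\,\big|\,O_n=o\big)\cdot\inf_{o':\pi(o')=\lfloor ms^1\rfloor}P\big(Z_{n+l}\in\dU\,\big|\,O_{n+1}=o'\big)\,.$$
Apply the one--step lower bound of Proposition~\ref{pgdtrans1} to the first factor, and the stage $l-1$ hypothesis to the second, the tail chain $s^1,\dots,s^l$ witnessing $V_{l-1}(s^1,t)\leq\sum_{k=1}^{l-1}V_1(s^k,s^{k+1})$. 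Taking $\tfrac1m\ln$ and the $\liminf$ over the regime yields a lower bound of $-\sum_{k=0}^{l-1}V_1(s^k,s^{k+1})\geq-V_l(r,t)-\e$, uniformly in $o$ with $\pi(o)=\lfloor mr\rfloor$; letting $\e\to0$ and optimizing over $t\in\uro$ completes the induction.

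I expect the passage $\d\to0$ in the upper bound to be the main obstacle: one must ensure that the cell--wise minima $\min_i\big(\inf_{\overline U\times\overline C_i}V_1+\inf_{\overline C_i\times\overline U'}V_{l-1}\big)$ do not fall below $\inf\{V_l(r,t):r\in\overline U,\,t\in\overline U'\}$ in the limit, which is delicate precisely because $V_1$ equals $+\infty$ on part of the boundary of $\cD$, so one cannot appeal to uniform continuity and must run the compactness/lower--semicontinuity argument carefully. Everything else is a mechanical iteration of the estimates already established for $l=1$.
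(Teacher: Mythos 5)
Your proof is correct, and it fills in what the paper dispatches with the one-line remark ``a similar proof shows\dots''. The two points you flag are exactly the ones that require care and you handle both well. First, since $\Zt$ is not Markovian, the induction must be carried on $o$--uniform statements, and you rightly note that the proof of Proposition~\ref{pgdtrans1} already furnishes such bounds because the auxiliary quantities $\underline{V},\overline{V}$ (via $\underline F,\overline F$) depend on $o$ only through $\pi(o)$; this, together with the Markov property of $\Ot$, makes the chaining legitimate and also yields the stated corollary about $P(\,\cdot\mid Z_n=\lfloor mr\rfloor)$ by sandwiching between the $\inf$ and $\sup$ over $\{o:\pi(o)=\lfloor mr\rfloor\}$. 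Second, the $\d\to0$ passage in the upper bound does require exactly the compactness/lower--semicontinuity argument you give: $V_1=I_K\circ(G\times\mathrm{id})$ is l.s.c.\ because $I_K$ is a (jointly l.s.c., $[0,+\infty]$--valued) relative--entropy and $G$ is continuous, and l.s.c.\ propagates through the infimal--convolution recursion $V_l(r,t)=\inf_{s\in\cD}(V_1(r,s)+V_{l-1}(s,t))$ because the $\inf$ is taken over the compact set $\cD$; extracting convergent subsequences of near--optimizers, with the two intermediate points in a common shrinking cell, then gives the contradiction you describe. The route the paper presumably has in mind is the unrolled version of your induction: write the $l$--step probability as a sum over the polynomially many grid choices for $(Z_{n+1},\dots,Z_{n+l-1})$, bound by a polynomial factor times the largest summand via the one--step multinomial estimate, and identify the resulting discrete optimum with $\inf V_l$ by the same subsequence--extraction used in the one--step proof. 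That direct route meets exactly the same discretization/l.s.c.\ issue you isolate; your inductive presentation is a clean and arguably more transparent way to organize it.
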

The rate function $V_1(r,t)$ is equal to 0
if and only if
$t=G(r)$.
Thus,
the Markov chain
$(Z_n/m)_{n\geq0}$
can be seen as a random perturbation of the dynamical system
associated to the map $G$ (cf. section~\ref{model}).
The next sections study the consequences of the large deviations principles
of proposition~\ref{pgdtrans1} and corollary~\ref{pgdtransl}
on the asymptotic behavior of the process $\Zt$.

\textbf{Notation.}
In the sequel, by ``asymptotically'' we mean:
for $\ell,m$ large enough, $q$ small enough, and $\ell q$ close enough to $a$. All subsequent statements and inequalities need not be true for all
values of $\ell,m$ and $q$, but only asymptotically, even if we do not
state so explicitly.
For $o\in\pml$ or $z\in\dD$, we use the notation
$$E_o(\cdot)\,=\,E(\cdot\,|\,O_0=o)\,,\qquad
E_z(\cdot)\,=\,E_z(\cdot\,|\,Z_0=z)\,.$$
Note that the first expectation will usually be a number,
while the second one will usually be a random variable.
Thus, an expression of the sort
$$P_z(Z_1=0)\geq p\,,$$
should be interpreted as
$$\inf_{o:\pi(o)=z}P_o(Z_1=0)\,\geq p\,.$$

\section{Time spent away from the fixed points}\label{awayfp}
The aim of this section is to show that the process $\Zt$ 
has a small probability of staying away from a neighborhood of the fixed points
for a long time.
We begin by giving a useful lemma.
For a set $A\subset \cD$ and $\e>0$ we denote by $A^\e$ 
the set of points in $\cD$ at a distance smaller than $\e$ from $A$, i.e.,
$$A^\e\,=\,\big\lbrace\,
r\in\cD:d(r,A)<\e
\,\big\rbrace\,.$$
Let $K,U$ be subsets of $\cD$ satisfying :

$\bullet$ The set $K$ is compact and $U$ is open (with respect to the relative topology in $\cD$).

$\bullet$ There exists $\e>0$ such that any trajectory of the dynamical system starting on $K$ goes through $U$,
and does so before exiting $K^\e$, i.e., for all $r\in K$ there exists $n(r)\in\N$ such that
$$G^1(r),\dots,G^{n(r)-1}(r)\in K^\e\quad\text{and}\quad G^{n(r)}(r)\in U\,.$$
\begin{lemma}\label{timefar1}
There exist $h\in\N$
and $c>0$ (depending on $K,U$) such that, asymptotically, for every point $z\in\K$ 
$$P_{z}\Big(
Z_1\not\in \dU,\dots,Z_{h}\not\in \dU
\Big)\,\leq\,
e^{-cm}\,.$$
\end{lemma}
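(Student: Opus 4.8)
The plan is to turn the statement into a lower bound on the large--deviations cost of a trajectory of $\Zt$ that avoids $U$ during $h$ consecutive steps, and then to show that this cost is bounded away from $0$ because every deterministic trajectory issued from $K$ reaches $U$ quickly. First I would fix $h$ by a compactness argument. By hypothesis, for each $r\in K$ there is a first positive time $n(r)$ with $G^{n(r)}(r)\in U$; since $G$ is continuous on $\cD$ and $U$ is open, the sets $\{\,r\in\cD:G^j(r)\in U\text{ for some }1\le j\le N\,\}$, $N\ge1$, are open, increase with $N$, and cover the compact set $K$, so some fixed $h\in\N$ works for all of $K$: every deterministic trajectory issued from a point of $K$ visits $U$ within its first $h$ steps. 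Fix this $h$; it depends on $K$ and $U$ only.

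Next I would bound the bad probability by a sum over intermediate states. Writing $z_0=z$,
$$P_z\big(Z_1\notin\dU,\dots,Z_h\notin\dU\big)\ \le\ \sum P_z\big(Z_1=z_1,\dots,Z_h=z_h\big),$$
the sum running over all $z_1,\dots,z_h\in\dD\setminus\dU$, which makes at most a power of $m$ terms, the exponent depending only on $K$ and $h$. Using the Markov property of $\Ot$ and the one--step estimate $\ln P(Z_{n+1}=z'\,|\,O_n=o)=-mI_K(\pi(F(o/m)),z'/m)+\Phi(o,z')$ with $|\Phi|\le C(K)\ln m$, each path probability is at most the product over $j$ of $\sup_{o:\pi(o)=z_{j-1}}P_o(Z_1=z_j)$, hence, using $\underline F_k\le F_k\le\overline F_k$ asymptotically as in the proof of Proposition~\ref{pgdtrans1},
$$P_z\big(Z_1\notin\dU,\dots,Z_h\notin\dU\big)\ \le\ m^{C'(K,h)}\exp\Big(-m\sum_{j=1}^h\underline V(z_{j-1}/m,z_j/m)\Big),$$
where $\underline V$ is the function of that proof, which satisfies $\underline V(\pi(x),\cdot)\le I_K(\pi(F(x)),\cdot)$ on $\cS^\ell$ and converges, asymptotically, to $V_1$. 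Since $z_0/m\in K$ and $z_1/m,\dots,z_h/m\in\cD\setminus U$, the exponent is, asymptotically, at least $m(\delta-o(1))$, where
$$\delta\ :=\ \inf\Big\{\,\textstyle\sum_{j=1}^h V_1(s^{j-1},s^j):s^0\in K,\ s^1,\dots,s^h\in\cD\setminus U\,\Big\}.$$

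It remains to see that $\delta>0$. The set $K\times(\cD\setminus U)^h$ is compact, because $\cD\setminus U$ is closed in the compact set $\cD$, and $(s^0,\dots,s^h)\mapsto\sum_j V_1(s^{j-1},s^j)$ is lower semicontinuous: indeed $V_1(r,t)=I_K(G(r),t)$ is the composition of the continuous map $(r,t)\mapsto(G(r),t)$ with the jointly lower semicontinuous relative entropy $I_K$. Hence the infimum defining $\delta$ is attained at some $(\sigma^0,\dots,\sigma^h)$. If $\delta=0$, then $V_1(\sigma^{j-1},\sigma^j)=0$ for every $j$, so $\sigma^j=G(\sigma^{j-1})$ and thus $\sigma^j=G^j(\sigma^0)$ with $\sigma^0\in K$; but by the choice of $h$ there is $1\le j\le h$ with $G^j(\sigma^0)\in U$, contradicting $\sigma^j\in\cD\setminus U$. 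Therefore $\delta>0$, and choosing any $c\in\,]0,\delta[\,$ absorbs the prefactor $m^{C'(K,h)}$ for $m$ large enough, which gives $P_z(Z_1\notin\dU,\dots,Z_h\notin\dU)\le e^{-cm}$ asymptotically, uniformly over $z\in\K$.

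The step I expect to be the main obstacle is the uniform control, over all intermediate states at once, of the passage from the finite--$\ell$, finite--$q$ transition estimate to a bound phrased in terms of $V_1$, in particular near the boundary of $\cD$ where $V_1$ may take the value $+\infty$. This is the delicate point already dealt with in the proof of Proposition~\ref{pgdtrans1} through the sandwiching functions $\underline V$ and $\overline V$, and I would simply reuse that argument. A minor point is to read ``goes through $U$'' in the hypothesis at a strictly positive time, so that the contradiction in the positivity step remains valid even when $K$ intersects $U$.
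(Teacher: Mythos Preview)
Your proposal is correct and takes a genuinely different route from the paper's proof.

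The paper argues by \emph{tubes}: for each $r\in K$ it builds, along the deterministic orbit $r,G(r),\dots,G^{h(r)}(r)\in U$, a chain of small balls $B_0\ni r,\dots,B_{h(r)}\subset U$ with $G(B_{n-1})$ well inside $B_n$; a finite subcover of $K$ by the initial balls yields a uniform $h$. For $z\in\K$ lying in the $i$--th patch, it bounds $P_z(Z_1\notin\dU,\dots,Z_h\notin\dU)$ by $P_z(Z_{h(i)}\notin\dU)$, then by the probability that the chain leaves the tube at some step, and each one--step exit probability is controlled by the LDP of Proposition~\ref{pgdtrans1}, giving finitely many strictly positive constants $c_i^n$.

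You instead fix $h$ by the (simpler) open--cover argument on the sets $\{r:G^j(r)\in U\text{ for some }1\le j\le N\}$, then sum over all $h$--step lattice paths in $\dD\setminus\dU$, bound each path via the one--step estimate and the sandwiching function $\underline V$ from Proposition~\ref{pgdtrans1}, and finally show that the constrained $h$--step cost
$\delta=\inf\{\sum_j V_1(s^{j-1},s^j):s^0\in K,\ s^1,\dots,s^h\in\cD\setminus U\}$
is strictly positive by lower semicontinuity of $V_1=I_K(G(\cdot),\cdot)$ on the compact set $K\times(\cD\setminus U)^h$. The subsequence extraction you invoke to pass from $\underline V$ to $V_1$ is exactly the manoeuvre used in the upper bound of Proposition~\ref{pgdtrans1}, so that step is on solid ground.

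What each approach buys: your argument is shorter and isolates the heart of the matter in the single inequality $\delta>0$, which is a statement about the limiting dynamical system alone; it also never uses the ``before exiting $K^\e$'' part of the hypothesis, which is consistent with the paper's proof where the condition $\eta^r_n<\e$ is written but not exploited. The paper's tube argument is more constructive and yields, as a by--product, explicit neighbourhoods along the orbit with quantified one--step escape costs; this localisation can be convenient in later sections when one wants to track the process through specific regions rather than merely assert it reaches $U$.
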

\begin{proof}
For $r\in\R^{K+1}$ and $\eta>0$,
we denote by $B(r,\eta)$ the open ball around $r$ of radius $\eta$ (intersected with $\cD$).
Recall that for $r\in\cD$ we denote by $r^n$ the $n$--th
iterate of $r$ by the map $G$. 
By continuity of the map $G$,
for every $r\in K$,
there exists $h(r)\in\N$ and $0<\eta^r_0,\dots,\eta^r_{h(r)}<\e$ such that,
for all $0\leq n\leq h(r)-1$,
$$
G\big(
B(r^n,\eta^r_n)
\big)\,\subset\,B(r^{n+1},\eta^r_{n+1}/2)\qquad
\text{and}\qquad
B(r^{h(r)},\eta^r_{h(r)})\subset U\,.$$
The family $\lbrace\,
B(r,\eta_0^r):r\in K
\,\rbrace$
forms an open cover of the compact $K$.
Thus, there exist $r_1,\dots,r_M\in K$ such that
$$K\,\subset\, \bigcup_{1\leq i\leq M}
B(r_i,\eta_0^{r_i})\,.$$
Set
$$h\,=\,\max_{1\leq i\leq M} h(r_i)\,.$$
Let $t\in K$ 
and let $i\in\lbrace\,1,\dots,M\,\rbrace$
be such that $t\in B(r_{i},\eta^{r_{i}}_0)$.
We denote the quantity $h(r_i)$ simply by $h(i)$, the open ball $B(r_{i}^n,\eta^{r_{i}}_n)$
by $B_n$,
and we set $\B_n\,=\,mB_n\cap\dD$.
We have then,
\begin{multline*}
\hspace{-6pt}
P_{z}\big(
Z_1\not\in \dU,\dots,Z_{h}\not\in \dU
\big)\,
\leq\,P_{z}\big(
Z_{h(i)}\not\in \dU
\big)\,
=\,1-P_{z}\big(
Z_{h(i)}\in\dU
\big)\\
\leq\,1-P_{z}\big(
Z_1\in \B_1,\dots,Z_{h(i)}\in\B_{h(i)}
\big)
\,=\,P_{z}\big(
Z_n\not\in \B_n
\ \text{for some}\ 1\leq n\leq h(i)
\big)\\
\leq\,
\sum_{1\leq n\leq h(i)}
P_{z}\big(Z_1\in \B_1,\dots,
Z_{n-1}\in \B_{n-1},
Z_n\not\in \B_n
\big)\\
\leq\,
\sum_{1\leq n\leq h(i)}
\sum_{z'\in \B_{n-1}}P_{z}\big(
Z_n\not\in  \B_n\,\big|\,
Z_{n-1}=z
\big)P_{z}\big(
Z_{n-1}=z'
\big)\,.
\end{multline*}
The large deviations principle for the transitions of $(Z_n)_{n\geq0}$
yields the following bound,
\begin{multline*}
\limsup_\lmqq \frac{1}{m}\ln
P\big(
Z_n\not\in  \B_n\,\big|\,
Z_{n-1}=z'
\big)\\
\leq\,
-\inf\big\lbrace\,
V_1(\rho,\rho'):\rho\in B_{n-1}, \rho'\not\in B_n
\,\big\rbrace\,=\,-c_i^{n}\,.
\end{multline*}
Since $G(B_{n-1})\subset B_n$,
the constant $c^{n}_i$ is strictly positive.
Let $0<\eta<c^n_i$.
From the above inequalities, we conclude that
\begin{multline*}
P_{\lfloor tm \rfloor}\big(
Z_1\not\in \dU,\dots,Z_{h}\not\in \dU
\big)\\
\leq\,\exp\big(
-m(c_i^n-\eta)
\big)\sum_{1\leq n\leq h(i)}
P_{z}\big(
Z_{n-1}\in \B_{n-1}
\big)\,\leq\,
h\exp\big(
-m(c^n_i-\eta)
\big)\,.
\end{multline*}
Since $h$ is fixed, 
and since the number of constants $c^n_i$ is finite,
the above probability is bounded by $e^{-mc}$,
for some $c>0$ independent of $t$.
\end{proof}
We have discussed the behavior of the dynamical system associated to the mapping $G$
in section~\ref{model}, we recall that the set $I_A$ encodes the fixed points of $G$.
Let $\d>0$ and define, for $b\in I_A$, the sets
$$U_b\,=\,\big\lbrace\,
r\in\cD\,:\,|r-\rho^{b}|_1<\d
\,\big\rbrace\,,\qquad
U\,=\,\bigcup_{b\in I_A}U_i\,.$$
The set $\cD\setminus U$ is compact and for every $r\in\cD\setminus U$,
$$\lim_{n\to\infty}G^n(r)\,\in\,U\,.$$
We use the lemma to prove the following corollary.
\begin{corollary}\label{timefar2}
There exist $h\in\N$ and $c>0$ such that, asymptotically,
for every $z\in\dD\setminus\dU$ and $n\in\N$, 
$$P_{z}\Big(
Z_t\not\in \dU,0\leq t\leq n
\Big)\,\leq\,
\exp\Big(
-mc\Big\lfloor\frac{n}{h}\Big\rfloor
\Big)\,.$$
\end{corollary}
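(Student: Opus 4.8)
The strategy is to split $\{0,\dots,n\}$ into blocks of a fixed length $h$, apply Lemma~\ref{timefar1} on each block, and chain the resulting estimates together by means of the genuine Markov property of the occupancy process $\Ot$ (of which $\Zt$ is the projection $\pi$).

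First I would verify that Lemma~\ref{timefar1} applies to the compact set $\cD\setminus U$ and the open set $U$. The set $\cD$ is closed and bounded, hence compact, and $U$ is open, so $\cD\setminus U$ is compact. By Proposition~\ref{convds}, together with $G(0)=0$, the orbit $(G^n(r))_{n\ge0}$ of any $r\in\cD$ converges to one of the fixed points $\rho^{b}$, $b\in I_A$, and therefore eventually enters the open set $U$; so for each $r\in\cD\setminus U$ there exists $n(r)\in\N$ with $G^{n(r)}(r)\in U$. Taking $\e$ larger than the diameter of $\cD$ makes $(\cD\setminus U)^\e\cap\cD=\cD$, so the requirement that $G^1(r),\dots,G^{n(r)-1}(r)$ remain in $(\cD\setminus U)^\e$ is automatically satisfied. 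Lemma~\ref{timefar1} thus furnishes $h\in\N$ and $c>0$ such that, asymptotically, for every $o\in\pml$ with $\pi(o)\in\dD\setminus\dU$,
$$P_o\big(Z_1\notin\dU,\dots,Z_h\notin\dU\big)\,\le\,e^{-cm}\,,$$
where we used that $m(\cD\setminus U)\cap\dD=\dD\setminus\dU$.

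Now fix $z\in\dD\setminus\dU$ and $n\ge1$, set $N=\lfloor n/h\rfloor$, and for $0\le j\le N-1$ put
$$A_j\,=\,\big\{\,Z_{jh+1}\notin\dU,\ \dots,\ Z_{(j+1)h}\notin\dU\,\big\}\,.$$
Since all the times occurring in $A_0,\dots,A_{N-1}$ belong to $\{1,\dots,Nh\}\subseteq\{1,\dots,n\}$, we have $\{\,Z_t\notin\dU\ \text{for}\ 0\le t\le n\,\}\subseteq\bigcap_{j=0}^{N-1}A_j$. Fix $o\in\pml$ with $\pi(o)=z$. By the Markov property of $\Ot$, the conditional expectation of $\mathbf{1}_{A_{N-1}}$ given $O_0,\dots,O_{(N-1)h}$ equals $g(O_{(N-1)h})$, where $g(o')=P_{o'}(Z_1\notin\dU,\dots,Z_h\notin\dU)$; on the event $\bigcap_{j=0}^{N-2}A_j$ one has $\pi(O_{(N-1)h})=Z_{(N-1)h}\notin\dU$, so the displayed bound yields $g(O_{(N-1)h})\le e^{-cm}$ there. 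Since $\bigcap_{j=0}^{N-2}A_j$ is determined by $O_0,\dots,O_{(N-1)h}$, it follows that
$$P_o\Big(\bigcap_{j=0}^{N-1}A_j\Big)\,\le\,e^{-cm}\,P_o\Big(\bigcap_{j=0}^{N-2}A_j\Big)\,.$$
Iterating this peeling — at each step the fact $Z_{jh}\notin\dU$ needed to invoke the bound is already recorded in $A_{j-1}$ when $j\ge1$, and is the standing hypothesis $z\notin\dU$ when $j=0$ — gives $P_o\big(\bigcap_{j=0}^{N-1}A_j\big)\le e^{-cmN}$. Taking the supremum over all $o\in\pml$ with $\pi(o)=z$ gives $P_z(Z_t\notin\dU,\ 0\le t\le n)\le e^{-mc\lfloor n/h\rfloor}$; the case $n<h$ is trivial since the right-hand side is then $1$.

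The only genuinely delicate point is the verification of the hypotheses of Lemma~\ref{timefar1}: one must use Proposition~\ref{convds} to guarantee that every orbit of $G$ started in $\cD\setminus U$ eventually reaches $U$, and then enlarge $\e$ enough that the clause ``$G^1(r),\dots,G^{n(r)-1}(r)$ stay in $(\cD\setminus U)^\e$'' becomes vacuous. Once this is done, the block decomposition and the Markov peeling are entirely routine.
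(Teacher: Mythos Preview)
Your proof is correct and follows essentially the same approach as the paper: partition the time interval into blocks of length $h$, apply Lemma~\ref{timefar1} on each block, and iterate via the Markov property. Your treatment is in fact a bit more careful than the paper's in two respects: you explicitly verify the hypotheses of Lemma~\ref{timefar1} for the pair $(\cD\setminus U,\,U)$ (in particular the nice observation that choosing $\e$ larger than the diameter of $\cD$ renders the ``stay in $K^\e$'' clause vacuous), and you make explicit that the conditioning step uses the genuine Markov property of $\Ot$ rather than of the non-Markovian projection $\Zt$.
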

\begin{proof}
Divide the interval $\lbrace\,0,\dots,n\,\rbrace$
into subintervals of length $h$.
Using iteratively the previous lemma,
we have, for $i\geq 1$,
\begin{multline*}
\!\!\!\!P_{z}\big(
Z_t\not\in \dU,0\leq t\leq (i+1)h
\big)
=\!\!\!\sum_{z'\in\dD\setminus \dU}\!\!
P_{z}\big(
Z_t\not\in \dU,0\leq t\leq (i+1)h,Z_{ih}=z'
\big)\\
=\,\sum_{z'\in\dD\setminus \dU}
P_{z}\big(
Z_t\not\in \dU,0\leq t\leq ih,Z_{ih}=z'
\big)\\
\times P_{z}\big(
Z_t\not\in \dU, i h<t\leq (i+1)h\big|Z_{i h}=z'
\big)
\leq P_{z}\big(
Z_t\not\in \dU,0\leq t\leq ih
\big)e^{-m c}.
\end{multline*}
Iterating this procedure we get
$$P_{z}\big(
Z_t\not\in \dU,0\leq t\leq (i+1)h
\big)\,\leq\,
e^{-mc(i+1)}\,.$$
Taking $i+1=\lfloor n/h\rfloor$ gives the desired result. 
\end{proof}

\section{Creating enough master sequences}\label{ems}
Throughout this whole section we assume that $A(0)\exa>1$.
The aim of this section is to show that 
starting from any point of $\dD\setminus\lbrace\,0\,\rbrace$,
the process $(Z_n)_{n\geq 0}$ creates a number of master sequences
of order $m$
with a reasonable probability,
within a time of order $\ln m$.
\begin{theorem}\label{enoughms}
For any $\g>0$ small enough,
there exists a positive constant $C$ 
such that for every $z\in\dD\setminus\lbrace\, 0\,\rbrace$
$$\liminf_\lmqq\,
\frac{1}{m}\ln P_z\big(
Z_{\lfloor C\ln m\rfloor}(0)\geq \g m
\big)\,=\,0\,.$$
\end{theorem}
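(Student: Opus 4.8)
The goal is to show that, starting from any nonzero occupancy state, the process builds up $\gamma m$ master sequences within time $O(\ln m)$ with probability at least $e^{o(m)}$. The overall strategy is a two-phase argument: a deterministic-growth phase that amplifies the master-sequence count from $1$ to $\eta m$ for a small fixed $\eta$, followed by a stabilization phase that pushes the process into a neighborhood of $\rho^0$ using the large deviations principle of Proposition~\ref{pgdtrans1}.

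First I would handle the \emph{seeding} step: from any $z\in\dD\setminus\{0\}$ there is at least one individual in some class $j\le \ell$. In one step, with probability bounded below by a fixed positive constant independent of $m$ (coming from the multinomial transition, using that a single class-$j$ individual produces a class-$0$ offspring with probability $M(j,0)>0$, which asymptotically is positive only for $j$ small, but one checks that \emph{some} class among the occupied ones leads to class $0$, or more robustly that the neutral dynamics eventually produces a master sequence), we reach a state with $Z_n(0)\ge 1$. Actually the cleaner route: since $A(0)e^{-a}>1$, even a single master sequence has expected offspring count $A(0)/\phi(\cdot)>1$ after accounting for the mean fitness, so the master-sequence count behaves like a slightly supercritical branching process as long as it is $o(m)$. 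The key estimate is that, given $Z_n(0)=j$ with $1\le j\le \eta m$, the probability that $Z_{n+1}(0)\ge (1+\beta)j$ for some fixed $\beta>0$ (with $1+\beta<A(0)e^{-a}$) is bounded below by a constant $p_0>0$ independent of $m$ and $j$; this follows from a Chernoff/second-moment bound for the relevant Binomial$(m, F_0)$ with $F_0 \gtrsim (A(0)/\phi)\cdot (j/m) \cdot e^{-a}$. Chaining $O(\ln m / \ln(1+\beta))$ such steps, each succeeding with probability $\ge p_0$, multiplies up to probability $\ge p_0^{C\ln m} = m^{-C\ln(1/p_0)} = e^{-(\ln\ln m)\cdot O(\ln m)} $, which is $e^{o(m)}$, hence $\frac1m\ln(\cdot)\to 0$. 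This yields $Z_{n_1}(0)\ge \eta m$ with $n_1=O(\ln m)$.

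The hardest part, as the excerpt itself flags, is that the large deviations principle is useless while $Z_n(0)=o(m)$, so the geometric-growth phase above must be controlled by \emph{bare-hands} binomial estimates uniform in the current count $j$ and in $m$; keeping the success probability $p_0$ bounded away from $0$ uniformly requires care with the denominator $\phi$, which stays in a compact interval $[1, A(0)]$, and with the lower bound $M(j,0), M(j,1),\dots$ — here one uses the convergence of the mutation matrix (appendix~\ref{Propmut}) to argue that, asymptotically, a class-$0$ individual stays in class $0$ with probability $\to e^{-a}>0$. Once $Z_{n_1}(0)\ge \eta m$, i.e. $Z_{n_1}/m$ lies in a fixed compact subset of $\dD$ bounded away from $0$, I would invoke Proposition~\ref{convds}: the dynamical system from any such point converges to $\rho^0$, and in particular reaches the ball $B(\rho^0,\delta)$ in a bounded number $n_2$ of steps while staying in a compact set. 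Then the lower bound in Proposition~\ref{pgdtrans1} (iterated $n_2$ times, or Corollary~\ref{pgdtransl}) gives $P_{z'}(Z_{n_1+n_2}\in \widehat{B(\rho^0,\delta)})\ge e^{-m(V_{n_2}(\cdot)+o(1))}$ with finite rate, and since $V_{n_2}$ along a true trajectory is close to $0$, the exponential rate is $o(1)$; choosing $\delta$ small enough that $\rho^0_0 - \delta \ge \gamma$ forces $Z(0)\ge \gamma m$. Combining the two phases and taking $C$ large enough to accommodate $n_1+n_2 \le C\ln m$ gives the claim.

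One technical point to nail down: the growth-phase estimate must survive the \emph{non-Markovianity} of $\Zt$ — but this is exactly what the convention $P_z(\cdot)=\inf_{o:\pi(o)=z}P_o(\cdot)$ is for, and the binomial lower bound on $Z_{n+1}(0)$ depends on $o$ only through $F_0(o/m)$, which is bounded below in terms of $\pi(o)$ via $\underline F_0$ as in the proof of Proposition~\ref{pgdtrans1}; so all the per-step bounds hold uniformly over the fiber. I expect the write-up to spend most of its length on the uniform supercritical-branching lower bound in the seeding/growth phase, with the stabilization phase being a short appeal to the already-established LDP lower bound and the convergence of the deterministic system.
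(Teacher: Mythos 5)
The core gap in your plan is in the growth phase. You want a uniform lower bound $p_0>0$ on the per--step probability $P(Z_{n+1}(0)\ge(1+\beta)Z_n(0)\mid Z_n)$, but the multiplicative drift on the master count is $A(0)M(0,0)/\phi(Z_n/m)\approx A(0)e^{-a}/\phi$, which exceeds $1+\beta$ only when $\phi$ is bounded away from $A(0)e^{-a}$. You note that $\phi$ lies in $[1,A(0)]$, but that is precisely not the right control: since $A(0)>A(0)e^{-a}$, the mean fitness $\phi$ can exceed $A(0)e^{-a}$, and indeed $\phi(\rho^0)=A(0)e^{-a}$, so as the master and low--class counts build up, $\phi$ approaches a value at which the drift factor is $1$ and the supercritical--branching heuristic dies. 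Moreover, the event you iterate controls only $Z_n(0)$ and says nothing about the remaining coordinates $Z_n(1),\dots,Z_n(K)$, so you have no inductive handle on $\phi$ at all. This is exactly what the paper's proof supplies and your outline omits. The paper first steers the process into a small neighborhood of a fixed point $\eta^b$ of $\underline{F}$, where $\underline{F}$ is contracting on $D_b$ and $\phi\approx A(b)e^{-a}<A(0)e^{-a}$; it then constructs the explicit deterministic orbit $z^n=\lfloor m\underline{F}(z^{n-1}/m)\rfloor$ started from a seeded state $(w,0,\dots,0,z_b,\dots,z_K)$, and proves two lemmas showing $z^n_k\le c_k z^n_0$ for $k<b$ and $|\widetilde{z^n}-m\eta^b|_1\le c_b z^n_0+c_\delta^n|\widetilde{z^0}-m\eta^b|_1$, which guarantee the orbit stays inside the good neighborhood (hence $\phi$ stays controlled and $z^n_0$ grows at a fixed geometric rate $\rho>1$) until $z^n_0\ge\gamma m$; finally it bounds the rate function $I_K$ by $O(1/m)$ per step along this orbit, so the chain follows it with probability $e^{-O(\ln m)}=e^{o(m)}$. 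Your proposal identifies the right pieces (seeding, geometric growth, a final LDP/dynamics stabilization step) but the companion--coordinate control that legitimizes the geometric growth step is the hard part, and it is missing.

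A smaller slip: the seeding probability is not ``bounded below by a fixed positive constant independent of $m$''. Producing a master sequence from a class--$h$ individual ($1\le h\le K$) has probability of order $q^h\sim m^{-h}$; the paper's lower bound on $P_z(Z_1=(w,0,\dots,0,z_b,\dots,z_K))$ is of the form $m^{-Mw}\cdot(\cdots)$, only polynomially small. This still yields an $e^{-\epsilon m}$ bound for any $\epsilon>0$, so the final exponential rate is unaffected, but the quantitative level matters for the bookkeeping and should be stated correctly.
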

Assume first that the process $(Z_t)_{t\geq0}$
starts from a neighborhood of one of the fixed points.
More precisely, let $b\in I_A\setminus \lbrace\,0\,\rbrace$ and
assume that the starting point is in a small neighborhood
of $\rho^b$ and is of the form
$$z\,=\,(w,0,\dots,0,z_b,\dots,z_K)\,.$$
Since, for $\d$ small enough, $G$ is contracting in the intersection
of the set $D_b=\lbrace\,r\in\cD:r_0+\cdots+r_{b-1}=0\,\rbrace$
with a sufficiently small neighborhood of $\rho^b$,
the process $(Z_t)_{t\geq0}$ will stay inside such a neighborhood
for a long time.
Note that for some $\e>0$ depending on the neighborhood,
$$G_0\Big(
\frac{z}{m}
\Big)\,\geq\,\frac{w A(0)\exa}{A(b)\exa+\e}\,.$$
A similar inequality holds for points close to $z$,
so that if the neighborhood is small enough,
as long as the process is inside it,
the number of master sequences will tend to increase geometrically.
This is the key idea of the proof of the theorem,
which will be carried out in a few different steps:

$\bullet$ First we show that from any starting point,
the process jumps to a point of the form of $z$
in a finite number of steps,
with probability higher than $e^{-\e m}$, for every $\e>0$.

$\bullet$ Then we build a deterministic trajectory that,
starting from a point of the form of $z$, creates $\g m$ master sequences 
in less than $C\ln m$ steps, with $\g, C>0$.

$\bullet$ Finally we show that 
the process is likely enough to follow the 
deterministic trajectory.

Before we begin with this strategy, let us give a few auxiliary results.
We define the mappings 
$\overline{F},\underline{F}:\cD\lra\cD$
by setting,
for $r\in\cD$ and $k\in\zk$
\begin{align*}
\underline{F}_k(r)\,&=\,\phi(r)^{-1}\sum_{0\leq i\leq k}
r_iA(i)M(i,k)\,,\\
\overline{F}_k(r)\,&=\,\phi(r)^{-1}\bigg(
\sum_{0\leq i\leq k}
r_iA(i)M(i,k)+\big(1-|\pi_k(r)|_1\big)A(0)M(k+1,k)\bigg)\,.
\end{align*}
The mappings $\underline{F}$ and $\overline{F}$ satisfy
$$\forall\,x\in\cS^\ell\quad
\forall\, k\in\zk\qquad
\underline{F}_k\big(
\pi(x)
\big)\,\leq\,
F_k(x)\,\leq\,
\overline{F}_k\big(
\pi(x)\big)\,.$$
The first inequality is always true,
while the second one holds asymptotically.
\begin{lemma}
\label{convunifGF}
The following limits are uniform in $r\in\cD$,
$$
\lim_\lq\,\big|
\underline{F}(r)-G(r)
\big|_1\,=\,0\,,\qquad
\lim_\lq\,\big|
\overline{F}(r)-G(r)
\big|_1\,=\,0\,.
$$
\end{lemma}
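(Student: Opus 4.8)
The three maps $\underline F$, $\overline F$, $G$ share the structure $r\mapsto\phi(r)^{-1}L(r)$, with the \emph{same} polynomial denominator $\phi$ and with $L$ affine in $r$, the coefficients of $L$ being products of the fixed numbers $A(i)$ with entries of the lumped mutation matrix. The plan is to reduce the asserted uniform convergence on the compact set $\cD$ to two elementary ingredients: a uniform two-sided bound on $\phi$ over $\cD$, and the convergence of the finitely many coefficients of $L$ (which is precisely the convergence of the lumped mutation matrix recalled in the appendix~\ref{Propmut}, together with the fact that the backward entries $M(k+1,k)$ tend to $0$). Once these are in hand, the differences $\underline F(r)-G(r)$ and $\overline F(r)-G(r)$, being affine in $r$ with vanishing coefficients and divided by something bounded below, are $O(1)$ times a quantity tending to $0$, uniformly in $r\in\cD$.

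Concretely, I would first record that $\phi$ is bounded away from $0$ and from $\infty$ on $\cD$, uniformly in $\ell,m,q$: for $r\in\cD$,
$$\phi(r)\,=\,\big(1-|\pi(r)|_1\big)\cdot1+\sum_{h=0}^K r_hA(h)$$
is a convex combination of the fixed positive numbers $1,A(0),\dots,A(K)$ — the weights $1-|\pi(r)|_1,r_0,\dots,r_K$ are nonnegative and sum to $1$ — so $\phi(r)\in[c,C]$ with $c=\min\{1,A(0),\dots,A(K)\}>0$ and $C=\max\{1,A(0),\dots,A(K)\}$. Next I would set $\e(\ell,q)$ to be the maximum, over the finitely many pairs $0\le i,k\le K+1$, of the distance of $M(i,k)$ to its limiting value; by the appendix~\ref{Propmut}, $\lim_\lq\e(\ell,q)=0$.

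Then, for $k\in\zk$, the map $r\mapsto\underline F_k(r)-G_k(r)$ equals $\phi(r)^{-1}$ times a linear function of $r$ whose coefficients are, up to the bounded factors $A(i)$, the differences between $M(i,k)$ and the corresponding entries of the limiting kernel $M_\infty$; hence, using $\phi(r)\ge c$ and $\sum_{i=0}^K r_i\le1$ on $\cD$, one gets $|\underline F_k(r)-G_k(r)|\le c^{-1}(\max_iA(i))\,\e(\ell,q)$, and summing over $k$ bounds $|\underline F(r)-G(r)|_1$ by $(K+1)c^{-1}(\max_iA(i))\,\e(\ell,q)$, a bound independent of $r$ tending to $0$. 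For $\overline F$ the linear part is handled identically, and the extra term $\phi(r)^{-1}(1-|\pi_k(r)|_1)A(0)M(k+1,k)$ is at most $c^{-1}A(0)M(k+1,k)\le c^{-1}A(0)\,\e(\ell,q)$, since $0\le1-|\pi_k(r)|_1\le1$ and the limit of $M(k+1,k)$ is $0$; this yields the second uniform limit.

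I do not anticipate a real obstacle: the content of the lemma is entirely the observation that each map is an affine function of $r$ divided by a polynomial bounded below on a compact set, so pointwise convergence of the (finitely many) coefficients automatically upgrades to uniform convergence on $\cD$. The only steps requiring a little attention are the uniform positivity of $\phi$ — handled by the convex-combination remark — and the bookkeeping of which mutation-matrix entries enter $\underline F$ and $\overline F$ together with the citation of the appendix~\ref{Propmut} for their limits; the precise limiting values themselves are never needed.
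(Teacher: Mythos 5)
Your proof is correct and takes essentially the same route as the paper: both bound $\lvert\underline F_k(r)-G_k(r)\rvert$ by $\phi(r)^{-1}\sum_{i\le k}r_iA(i)\,\lvert M(i,k)-M_\infty(i,k)\rvert$, then take the supremum over the compact $\cD$ and use that the finitely many mutation-matrix entries converge. Your write-up is slightly more explicit than the paper's (which just says ``the rest can be shown in a similar way'') in two harmless respects: you justify the uniform lower bound $\phi(r)\ge\min\{1,A(0),\dots,A(K)\}>0$ by observing that $\phi(r)$ is a convex combination of $1,A(0),\dots,A(K)$, and you carry out the $\overline F$ case, correctly handling the extra term $\phi(r)^{-1}(1-|\pi_k(r)|_1)A(0)M(k+1,k)$ by noting $M(k+1,k)\to0$.
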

\begin{proof}
Recall that $M_\infty$
represents the limit mutation matrix (cf. the appendix~\ref{Propmut}).
Let $r\in\cD$ and $k\in\zk$, we have
\begin{multline*}
\big|
G_k(r)-\underline{F}_k(r)
\big|\,\leq\,\phi(r)^{-1}
\sum_{0\leq i\leq k}
r_iA(i)\big|M_\infty(i,k)-M(i,k)\big|\,\leq\\
\sup_{r\in\cD}\bigg(\phi(r)^{-1} 
\sum_{0\leq i\leq k} r_iA(i)
\bigg)
\max_{0\leq i,k\leq K}\big|
M_\infty(i,k)-M(i,k)
\big|\,.
\end{multline*}
This last quantity converges to $0$ asymptotically,
uniformly on $r\in\cD$. The rest of the lemma can be shown 
in a similar way.
\end{proof}
Results similar to propositions~\ref{fixed} and~\ref{convds} hold 
for the mapping $\underline{F}$.
The proofs are exactly the same,
even if the form of the fixed points is different.
More precisely,
we have the following result.
\begin{proposition}
Asymptotically, the mapping $\underline{F}$
has as many fixed points in $\cD$ as there are elements in $I_A$.
For each $b\in I_A$,
the associated fixed point $\eta^b$ satisfies $\eta^b_0=\cdots=\eta^b_{b-1}=0$
and $\eta^b_{b}\wedge\cdots\wedge\eta^b_K>0$.
The mapping $\underline{F}$ restricted to $\cD_b$ is
contracting in a neighborhood of $\eta^b$, and
$$\lim_\lq\eta^b\,=\,\rho^b\,.$$
\end{proposition}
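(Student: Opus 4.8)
The plan is to transfer the entire analysis of $G$ from Propositions~\ref{fixed} and~\ref{convds} to $\underline{F}$, using Lemma~\ref{convunifGF} as the bridge. The point is that $\underline{F}$ has exactly the same structural features that made the analysis of $G$ work: it is a ratio of a ``numerator'' linear in $r$ over the mean fitness $\phi(r)$, and it respects the flag of subspaces $\cD_b=\{r\in\cD:r_0=\cdots=r_{b-1}=0\}$, since $\underline{F}_k(r)$ for $k<b$ involves only the coordinates $r_0,\dots,r_k$, all of which vanish on $\cD_b$. So $\underline{F}$ maps each $\cD_b$ into itself, just as $G$ does. First I would rerun the fixed-point computation of Proposition~\ref{fixed} verbatim with $G$ replaced by $\underline{F}$: writing the fixed-point equations $\eta_k = \phi(\eta)^{-1}\sum_{i\le k}\eta_i A(i) M(i,k)$ coordinate by coordinate, one solves recursively for $\eta_b,\eta_{b+1},\dots,\eta_K$ exactly as before; the combinatorial identity that produced the closed form for $\rho^b$ goes through with $a^{i-h}/(i-h)!$ replaced by $M(i,k)$ (which is the finite-$\ell$, finite-$q$ analogue). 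The condition $A(b)\exa>1$ that selected the indices in $I_A$ becomes, asymptotically, the same condition, because $M(b,b)\to \exa$ and $M(b,j)\to 0$ for $j<b$; so the count of fixed points stabilizes at $|I_A|$ for $\ell$ large, $q$ small. The positivity $\eta^b_b\wedge\cdots\wedge\eta^b_K>0$ follows because each recursive step adds a strictly positive term (the diagonal mutation probability $M(k,k)$ is bounded below).

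Next I would prove $\eta^b\to\rho^b$. The clean way is a continuity-of-fixed-points argument: restrict both maps to $\cD_b$, where by Proposition~\ref{convds} the map $G$ is a contraction on a small ball $B$ around $\rho^b$ with some ratio $\theta<1$. By Lemma~\ref{convunifGF}, $\sup_{r\in\cD}|\underline{F}(r)-G(r)|_1\to 0$, so asymptotically $\underline{F}$ maps $B$ into a slightly larger ball and is itself a contraction there with ratio close to $\theta$ (it differs from $G$ by a uniformly small perturbation, and one can also check directly that the derivative of $\underline{F}$ converges uniformly to that of $G$ near $\rho^b$). A uniform contraction with a vanishing sup-norm perturbation has its unique fixed point converging to that of the limit map, by the standard estimate $|\eta^b-\rho^b|_1 = |\underline{F}(\eta^b)-G(\rho^b)|_1 \le |\underline{F}(\eta^b)-G(\eta^b)|_1 + \theta|\eta^b-\rho^b|_1$, hence $|\eta^b-\rho^b|_1\le(1-\theta)^{-1}\sup|\underline{F}-G|_1\to 0$. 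This simultaneously gives the convergence and the ``contracting in a neighborhood of $\eta^b$'' assertion, and it pins the fixed point of $\underline{F}$ inside $B$, which is part of why the count is exactly $|I_A|$ and not more — outside the neighborhoods of the $\rho^b$, $G$ has no fixed points and is uniformly bounded away from being one, so the small perturbation $\underline{F}$ cannot have spurious fixed points there either.

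The analogue of Proposition~\ref{convds} (identification of basins of attraction for the $\underline{F}$-dynamical system) is claimed implicitly and would be handled the same way: on $\cD_b\setminus\cD_{b'}$ for the next index $b'$, the $G$-orbit converges to $\rho^b$, and since $\underline{F}$ is a uniformly small $\cD_b$-preserving perturbation with the same contracting fixed point, a Lyapunov/shadowing argument transfers the convergence. I expect the main obstacle to be exactly this global basin statement rather than the local picture: the uniform closeness $\sup|\underline{F}-G|_1\to 0$ controls one step but not the compounding of errors over an unbounded orbit, so one needs the contraction near $\rho^b$ together with a ``no other recurrence'' input — precisely the content that Proposition~\ref{convds} supplies for $G$ — to funnel every orbit into the contracting neighborhood in boundedly many steps, after which the local argument finishes the job. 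Since the paper only states the local contraction and $\eta^b\to\rho^b$ explicitly, I would prove those two in full and remark that the basin description follows by the same transfer, exactly as the paper says (``the proofs are exactly the same'').
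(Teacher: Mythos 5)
Your proposal is correct and is essentially a careful fleshing-out of the paper's own (extremely terse) proof: the paper says only that ``the proofs are exactly the same'' as Propositions~\ref{fixed} and~\ref{convds} and that ``the last convergence is a direct consequence of lemma~\ref{convunifGF}.'' You correctly identify what this means — the recursion of Proposition~\ref{fixed} transfers verbatim because the fixed-point system for $\underline{F}$ is still upper-triangular (the sum in $\underline{F}_k$ runs only over $i\le k$), with the kernel $\exa\,a^{k-i}/(k-i)!$ replaced by $M(i,k)$ — and you correctly observe that the claimed ``direct consequence'' in fact needs more than uniform closeness: you couple Lemma~\ref{convunifGF} with the local contraction from Proposition~\ref{convds} to get $|\eta^b-\rho^b|_1\le(1-\theta)^{-1}\sup|\underline{F}-G|_1$, which is the right formulation.

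One small caveat worth tightening: the inequality $|G(\eta^b)-G(\rho^b)|_1\le\theta|\eta^b-\rho^b|_1$ presupposes $\eta^b$ already lies in the ball $B$ on which $G$ is contracting, which is what you are trying to establish; the clean fix is to apply the Banach fixed-point theorem directly to $\underline{F}$ on $B$ (asymptotically $\underline{F}$ maps $B$ into itself, by Lemma~\ref{convunifGF} and the fact that $G(\overline B)\subset B$), obtain a unique fixed point of $\underline{F}$ in $B\cap\cD_b$, and then identify it with the $\eta^b$ from the recursive computation by uniqueness of the solution of the triangular system. Your parenthetical remark that the derivative of $\underline{F}$ converges uniformly to that of $G$ is a separate (easy, but not automatic from the $L^\infty$ statement of Lemma~\ref{convunifGF}) verification that is indeed what gives the contraction of $\underline{F}$ itself, and is worth stating as such. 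Your closing concern about transferring the full basin-of-attraction statement is well-taken as a general caution, but it is orthogonal here: the proposition as stated asks only for the fixed-point count, the support and positivity of $\eta^b$, the local contraction, and the limit $\eta^b\to\rho^b$, all of which your argument covers.
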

The last convergence is a direct consequence of lemma~\ref{convunifGF}.
Let $\d>0$ and define, for $b\in I_A$, the sets
$$U_b(\d)\,=\,\big\lbrace\,
r\in\cD:|r-\eta^b|_1<\d
\,\big\rbrace\,.$$
Let $U(\d)$ denote the union of the sets $U_b(\d)$,
and let $W(\d)$ denote this same union but where
the neighborhood of $\eta^0$ has been left out,
i.e.,
$$
U(\d)\,=\,\bigcup_{b\in I_A} U_b(\d)\,,\qquad
W(\d)\,=\,U(\d)\setminus U_0(\d)\,.$$
Let $\e>0$.
The mapping $G$ is continuous on the compact set $\cD$,
it is therefore uniformly continuous on $\cD$.
In view of the lemma~\ref{convunifGF},
$\d$ can be chosen small enough so that 
for all $b\in I_A$, asymptotically,
$$\big|
\overline{F}(r)-\eta^b
\big|+\big|
\underline{F}(r)-\eta^b
\big|<\e\,.$$
Let $z\in\dD\setminus\lbrace\,0\,\rbrace$,
and note that $G(z)\neq 0$.
By lemma~\ref{timefar1}, 
there exist $h\in\N$ and $c>0$ such that, asymptotically,
$$P_z\big(
\exists\, i\in\lbrace\,0,\dots,h\,\rbrace\ \text{such that}\
Z_i\in\dU\setminus\lbrace\,0
\big)\,\geq\,1-e^{-cm}\,.$$
Suppose now that $z\in\dU_b(\d)$ for some $b\in I_A$,
fix $w\in\N$ and set
$z'=(w,0,\dots,0,z_b,\dots,z_K)$.
We show next that,
for $\d$ small enough,
$$\lim_\lmqq\,\frac{1}{m}\ln
P_{z}\big(
Z_1=z'
\big)\,\geq\,-\e\,.$$
Indeed, note that for any $o\in\pml$ satisfying $\pi(o)=z$,
we have the following asymptotic bound on the probability 
of creating a master sequence,
$$
F_0\Big(
\frac{o}{m}
\Big)\,\geq\,\phi\Big(
\frac{z}{m}
\Big)^{-1}
\sum_{0\leq h\leq K}\frac{z_h}{m}A(h)M(h,0)
\,\geq\,m^{-M}\,,
$$
for some $M>0$.
Which implies the following lower bound
on the probability of jumping from $z$ to $z'$,
$$
\frac{m!}{w!z_b!\cdots z_K!(m-|z'|_1)!}
m^{-Mw}\prod_{i=b}^K
\underline{F}_i\Big(
\frac{z}{m}\Big)^{z_i}\times
\bigg(
1-\Big|
\overline{F}\Big(
\frac{z}{m}
\Big)
\Big|_1
\bigg)^{m-|z'|_1}\,.
$$
We use now the lemma~\ref{ldmultinom}
in order to obtain the following asymptotic bound,
\begin{multline*}
\frac{1}{m}\ln 
P\big(
Z_1=z'\,\big|\,Z_0=z
\big)\,\geq\,
-\frac{1}{m}(K+1)(2+\ln m)
-\frac{w}{m}\ln\frac{w/m}{m^{-M}}\\
-\sum_{i=b}^K\frac{z_i}{m}\ln\frac{z_i/m}{\underline{F}_i(z/m)}
-\frac{m-|z'|_1}{m}\ln\frac{(m-|z'|_1)/m}{1-|\overline{F}(z/m)|_1}
\,.\end{multline*}
The first two quantities go to $0$, when $m$ goes to infinity,
so that the sum of both is eventually larger that $-\e/2$.
Since $z/m\in U_b(\d)$,
we have
$$\Big|
\frac{z}{m}-\underline{F}\Big(
\frac{z}{m}
\Big)
\Big|_1\,\leq\,
\Big|
\frac{z}{m}-\eta^b
\Big|+\Big|
\eta^b-\underline{F}\Big(
\frac{z}{m}
\Big)
\Big|\,<\,\d+\e\,.$$
Furthermore, for $b\leq i\leq K$,
the function $\underline{F}_i(r)$ is bounded
below by a positive constant $c$ when $r\in U_b(\d)$,
we conclude that
$$\sum_{i=b}^K
\frac{z_i}{m}\ln\frac{z_i/m}{\underline{F}_i(z/m)}\,\leq\,
K\ln\Big(
1+\frac{\d+\e}{c}
\Big)\,.$$
We choose $\d$ small enough so that this last quantity is
smaller than $\e/4$.
A similar argument shows that 
$\d$ can be chosen small enough
so that the last term is also bounded below by $-\e/4$, 
thus giving the desired bound;
$$\lim_\lmqq\,\frac{1}{m}\ln
P_{z}\big(
Z_1=z'
\big)\,\geq\,-\e\,.$$
Let us set $\d>0$, $b\in I_A\setminus\lbrace\,0\,\rbrace$ and $w\in\N$.
Suppose that 
$$z\,=\,(w,0,\dots,0,z_b,\dots,z_K)\,\in\,\dU_{b}(\d/2)\,.$$
We build next a deterministic trajectory $(z^n)_{n\geq0}$
such that $z^0=z$ and for $\d$ small enough, $w$ large enough,
there exist $\eta,C>0$ and $N< C\ln m$ satisfying
$z^N_0\geq \eta m$.
We set $z^0=z$ and for $n\geq1$,
$$
z^n\,=\,\bigg\lfloor
m\underline{F}\Big(
\frac{z^{n-1}}{m}
\Big)
\bigg\rfloor\,.
$$
Denote by $\overline{\phi}$ and $\underline{\phi}$
the maximum and the minimum mean fitness of a population
in $U_b(\d)$, i.e.,
$$\overline{\phi}\,=\,\max_{r\in U_b(\d)}\phi(r)\,,\qquad
\underline{\phi}\,=\,\min_{r\in U_b(\d)}\phi(r)\,.$$
We define by $N_b(\d)$ the first time of exit of the dynamical 
system $z^n$ from $\dU_b(\d)$:
$$N_b(\d)\,=\,\inf\big\lbrace\,
n\geq0:z^n\not\in \dU_b(\d) 
\,\big\rbrace\,.$$
Take $\d$ small enough and $w$ large enough so that, asymptotically,
$$\rho\,=\,\frac{A(0)M(0,0)}{\overline{\phi}}-\frac{1}{w}\,>1,.$$
Then, asymptotically, for any $n\leq N_b(\d)$,
$$z^n_0\,=\,\bigg\lfloor
\frac{z^{n-1}_0A(0)M(0,0)}{\phi(z^0/m)}
\bigg\rfloor\,\geq\,z^{n-1}_0\bigg(
\frac{A(0)M(0,0)}{\overline{\phi}}-\frac{1}{w}
\bigg)\,=\,\rho z^{n-1}_0\,.$$
The sequence
$(z^0_n)_{0\leq n\leq N_b(\d)}$ is increasing,
and bounded below by a geometric sequence with ratio $\rho$.
Let $\g>0$, then,
$\rho^n w$ is larger than $\g m$ if
$$n\,\geq\,n(\g)\,=\,\big(
\ln\rho
\big)^{-1}\ln\frac{\g m}{w}\,.$$
Therefore,
we try to show next that there exists $\g>0$ such that 
$N_b(\d)$ is larger than the quantity $n(\g)$.
In order to do so, we show in the next lemma that the coordinates $(z^n_k)_{0\leq k<b}$
cannot grow at a faster rate than $z^n_0$.
We prove afterwards that the same thing holds for the difference $|z_k^n-\eta^b_k|$,
where $b\leq k\leq K$.
Let us choose $\e>0$ small enough so that, asymptotically,
$$\forall\,k\in\lbrace\,1,\dots,K\,\rbrace\qquad
\frac{A(k)M(k,k)}{A(0)M(0,0)-\e}\,<\,1\,,$$
and let $w$ be such that $\overline{\phi}/w<\e$.
We prove next the following lemma. 
\begin{lemma}
There exist positive constants $c_0,\dots,c_{b-1}$
such that for $0\leq k<b$,
asymptotically,
$z_k^n\leq c_kz_0^n\,,$
for all $n\leq N_b(\d)$.
\end{lemma}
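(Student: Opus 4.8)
The plan is to exploit the structural fact that, for $k<b$, the coordinate $\underline F_k(r)$ depends only on $r_0,\dots,r_k$, so that the coordinates $0,\dots,b-1$ of the deterministic trajectory $(z^n)_{n\ge0}$ form a closed subsystem under $\underline F$, inside which the master coordinate $z^n_0$ grows faster than any of $z^n_1,\dots,z^n_{b-1}$. Accordingly I would set $c_0=1$ and argue by induction on $k\in\{1,\dots,b-1\}$, producing at each step a constant $c_k>0$ (depending only on $a$, on $A(0),\dots,A(b-1)$ and on $w$) for which $z^n_k\le c_kz^n_0$ holds for every $n\le N_b(\d)$.

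For the inductive step, fix $k$ and assume $z^n_i\le c_iz^n_0$ for all $i<k$ and all $n\le N_b(\d)$. Writing $\b=\phi(z^n/m)^{-1}$, the definition of the trajectory gives $z^{n+1}_0=\lfloor \b\,z^n_0A(0)M(0,0)\rfloor$, hence $z^{n+1}_0=\b\,z^n_0A(0)M(0,0)-\theta$ with $\theta\in[0,1)$, while, using the induction hypothesis,
\begin{multline*}
z^{n+1}_k\,=\,\Big\lfloor \b\sum_{i=0}^{k}z^n_iA(i)M(i,k)\Big\rfloor\\
\,\le\, \b\,z^n_0\Big(A(0)M(0,k)+\sum_{i=1}^{k-1}c_iA(i)M(i,k)\Big)+\b\,z^n_kA(k)M(k,k)\,.
\end{multline*}
Since the sequence $(z^n_0)_{0\le n\le N_b(\d)}$ is increasing (established above) and $z^0_0=w$, one has $z^{n+1}_0\ge w$; dividing the last bound by $z^{n+1}_0$, substituting $\b\,z^n_0=(z^{n+1}_0+\theta)/(A(0)M(0,0))$, and writing $u^n_k=z^n_k/z^n_0$, this yields a scalar recursion $u^{n+1}_k\le D_k+\theta_ku^n_k$, where $\theta_k=(1+1/w)A(k)M(k,k)/(A(0)M(0,0))$ and $D_k$ is a finite constant assembled from $a$, $A(0),\dots,A(k)$ and $c_1,\dots,c_{k-1}$.

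The crucial observation is that $\theta_k<1$ asymptotically: $M(k,k)$ and $M(0,0)$ both converge to $\exa$, so $\theta_k\to(1+1/w)A(k)/A(0)$, which is strictly below $1$ once $w$ is large, since $A(0)>A(k)$; likewise $D_k$ stays bounded, converging to $(1+1/w)\big(a^k/k!+\sum_{i=1}^{k-1}c_iA(i)a^{k-i}/((k-i)!A(0))\big)$. As $z^0_k=0$ for $1\le k<b$ we have $u^0_k=0$, so iterating $u^{n+1}_k\le D_k+\theta_ku^n_k$ gives $u^n_k\le D_k/(1-\theta_k)$ for all $n\le N_b(\d)$; this is the required $c_k$, and the induction closes.

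The step I expect to be delicate — not conceptually, but in the bookkeeping — is to keep the three sources of error uniformly small over $n\le N_b(\d)$: the floors in the definition of $z^n$, the gap between the mutation matrix $M$ and its Poisson limit $M_\infty$, and the oscillation of $\phi(z^n/m)$ over $U_b(\d)$. Only when all three are controlled do the strict inequality $\theta_k<1$ and the finiteness of $D_k$ genuinely hold in the asymptotic regime, and this is precisely where the hypotheses ``$\d$ small'', ``$w$ large'' and the limiting regime $\ell,m\to\infty$, $q\to0$, $\ell q\to a$ are used.
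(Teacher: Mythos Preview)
Your proposal is correct and follows essentially the same route as the paper: both argue by induction on $k$, derive a contracting scalar recursion for the ratio $u^n_k=z^n_k/z^n_0$ of the form $u^{n}_k\le D_k+\theta_k u^{n-1}_k$ with $\theta_k<1$ asymptotically, and close using $u^0_k=0$. The only cosmetic difference is how the floor error is absorbed: the paper subtracts $\overline\phi$ in the denominator and uses $\overline\phi/w<\e$ to get the contraction factor $d_k=A(k)M(k,k)/(A(0)M(0,0)-\e)$, whereas you carry the floor as a $(1+1/w)$ multiplicative factor to get $\theta_k=(1+1/w)A(k)M(k,k)/(A(0)M(0,0))$; note that in your bookkeeping $\phi$ actually cancels out, so the ``oscillation of $\phi$'' you flag as delicate is in fact not needed for this particular lemma.
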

\begin{proof}
We will prove the lemma by induction on $k$.
The case $k=0$ is obviously true.
Set $k\in\lbrace\,1,\dots,b-1\,\rbrace$
and suppose that the statement of the theorem holds for 
the coordinates $0,\dots,k-1$. Then, we have
$$\frac{z^n_k}{z^n_0}\,\leq\,
\frac{\displaystyle\phi(z^{n-1}/m)^{-1}
\sum_{0\leq i\leq k}z^{n-1}_iA(i)M(i,k)}{\displaystyle\phi(z^{n-1}/m)^{-1}z^{n-1}_0A(0)M(0,0)-1}\,
\leq\sum_{0\leq i\leq k}\frac{z^{n-1}_iA(i)M(i,k)}
{z^{n-1}_0A(0)M(0,0)-\overline{\phi}}\,.
$$
The sequence $(z^n_0)_{0\leq n\leq N_b(\d)}$ is increasing,
and thus, $\overline{\phi}/z^{n-1}_0<\overline{\phi}/w<\e$.
Therefore,
$$\frac{z^n_k}{z^n_0}\,\leq\,
\sum_{0\leq i\leq k}\frac{z^{n-1}_i}{z^{n-1}_0}\bigg(
\frac{A(i)M(i,k)}{A(0)M(0,0)-\e}
\bigg)\,.$$
By the induction hypothesis
$$\frac{z^n_k}{z^n_0}\,\leq\,
\sum_{0\leq i\leq k-1}c_i\bigg(
\frac{A(i)M(i,k)}{A(0)M(0,0)-\e}
\bigg)
+\frac{z^{n-1}_k}{z^{n-1}_0}\frac{A(k)M(k,k)}{A(0)M(0,0)-\e}\,.$$
Iterating this inequality,
and noting that $z^0_k=0$, we obtain
$$\frac{z^n_k}{z^n_0}\,\leq\,
\sum_{0\leq i\leq k-1}c_i\bigg(
\frac{A(i)M(i,k)}{A(0)M(0,0)-\e}
\bigg)\sum_{t=0}^{n-1}
\bigg(\frac{A(k)M(k,k)}{A(0)M(0,0)-\e}\bigg)^t\,.$$
Yet, asymptotically,
$$d_k\,=\,\frac{A(k)M(k,k)}{A(0)M(0,0)-\e}\,<\,1\,,$$
and thus,
$$\frac{z^n_k}{z^n_0}\,\leq\,\frac{1}{1-d_k}\sum_{0\leq i\leq k-1}c_i\bigg(
\frac{A(i)M(i,k)}{A(0)M(0,0)-\e}\bigg)\,.$$
We take $c_k$ to be equal to the right hand side
of this inequality, which fulfills the proof of the lemma.
\end{proof}
Let us introduce the following notation, for any $x\in\R^{K+1}$
we denote by $\widetilde{x}$ the vector $x$ where the 
coordinates $0,\dots,b-1$ have been set to $0$, i.e.,
$$\widetilde{x}\,=\,(0,\dots,0,x_b,\dots,x_K)\,.$$
\begin{lemma}
There exist constants $c_b>0$
and $0<c_\d<1$
such that
for $n\leq N_b(\d)$, 
asymptotically,
we have
$$\big|\widetilde{z^{n}}-m\eta^b\big|_1
\,\leq\,
c_b z^n_0+c_\d^n\big|
\widetilde{z^0}-m\eta^b
\big|_1\,.
$$
\end{lemma}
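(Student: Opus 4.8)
The plan is to estimate the evolution of $\widetilde{z^n}-m\eta^b$ directly from the recursion $z^n=\lfloor m\underline F(z^{n-1}/m)\rfloor$, exploiting that $\eta^b$ is a fixed point of $\underline F$ lying in $\cD_b$ and that $\underline F$ is contracting on $\cD_b$ near $\eta^b$. First I would write, for $b\le k\le K$,
\begin{align*}
\frac{z^n_k}{m}-\eta^b_k
&=\underline F_k\Big(\frac{z^{n-1}}{m}\Big)-\underline F_k(\eta^b)
+O\!\left(\frac{1}{m}\right)\,,
\end{align*}
where the $O(1/m)$ comes from the floor. Since $\underline F_k$ depends on all coordinates $0,\dots,k$, I split the displacement $z^{n-1}/m-\eta^b$ into its ``head'' part (coordinates $0,\dots,b-1$, which equals $(z^{n-1}_0,\dots,z^{n-1}_{b-1})/m$ since $\eta^b$ vanishes there) and its ``tail'' part $\widetilde{z^{n-1}}/m-\eta^b$. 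By the mean value inequality and the fact that $\underline F$ restricted to $\cD_b$ is contracting in a neighborhood of $\eta^b$, the tail part contributes a factor $c_\d<1$; by the previous lemma the head coordinates are bounded by $c_k z^{n-1}_0$ and hence, using that $(z^n_0)$ is increasing, by a constant times $z^n_0/m$. This yields an inequality of the shape
$$\big|\widetilde{z^n}-m\eta^b\big|_1\,\le\,c_\d\big|\widetilde{z^{n-1}}-m\eta^b\big|_1+c'\,z^n_0+C(K)\,,$$
valid asymptotically for $n\le N_b(\d)$ (the constant $C(K)$ absorbing the $K+1$ floor errors).

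Next I would iterate this one–step inequality. Iterating gives
$$\big|\widetilde{z^n}-m\eta^b\big|_1\,\le\,c_\d^n\big|\widetilde{z^0}-m\eta^b\big|_1
+c'\sum_{t=1}^{n}c_\d^{n-t}z^t_0+\frac{C(K)}{1-c_\d}\,.$$
Because $(z^t_0)_{0\le t\le N_b(\d)}$ is increasing, $\sum_{t=1}^n c_\d^{n-t}z^t_0\le z^n_0/(1-c_\d)$, so the middle term is bounded by a constant times $z^n_0$. The additive constant $C(K)/(1-c_\d)$ is harmless: since we are in the regime $m\to\infty$ and $z^n_0\ge w$ with $w$ large, it can be folded into the $c_b z^n_0$ term (or, if one prefers an exact inequality, one enlarges $c_b$ and notes $z^n_0\ge w\ge C(K)/((1-c_\d)\e)$ for suitable $w$). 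This produces exactly the claimed bound with $c_b$ a suitable constant and $c_\d$ the contraction ratio.

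The main obstacle is making the contraction step rigorous at the level of the \emph{integer} trajectory rather than the continuous dynamical system: one must check that $z^{n-1}/m$ stays in the neighborhood of $\eta^b$ where $\underline F|_{\cD_b}$ is genuinely contracting (this is why the stopping time $N_b(\d)$ is used, with $\d$ chosen small enough that this neighborhood contains $U_b(\d)$, invoking Lemma~\ref{convunifGF} and the proposition on the fixed points $\eta^b$ of $\underline F$), and that the floor operations, applied $K+1$ times per step, only perturb things by the additive $O(1)$ already accounted for. A secondary point is that the head–tail splitting uses the previous lemma's bound $z^{n-1}_k\le c_k z^{n-1}_0$, which is only available for $n-1\le N_b(\d)$ and relies on the same choice of $\e,\d,w$; so I would fix all these parameters once and for all at the start, compatibly with the requirements of both lemmas.
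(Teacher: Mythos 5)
Your proof follows essentially the same route as the paper's: extract the head-coordinate contribution from the fixed-point relation, control the tail via the contraction of $\underline{F}$ restricted to $\cD_b$ near $\eta^b$, bound the head coordinates via the preceding lemma, obtain a one-step affine recursion, and iterate. The only cosmetic differences are that the paper carries out the head/tail split via the explicit identity $\underline{F}_k(r)=\sum_{i<b}r_i A(i)M(i,k)/\phi(r)+\underline{F}_k(\widetilde{r})\,\phi(\widetilde{r})/\phi(r)$ rather than your triangle-inequality/mean-value argument, and in the final sum the paper exploits the geometric growth $z^{n-t}_0\leq\rho^{-t}z^n_0$ while you use only monotonicity of $(z^n_0)$ — both yield a constant multiple of $z^n_0$.
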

\begin{proof}
For $n\geq 0$ and $b\leq k\leq K$, we have,
noting that $\eta^b$ is a fixed point of the mapping $\underline{F}$,
$$\big|
z^n_k-m\eta^b_k
\big|\,\leq\,1+m\big|
\underline{F}_k(z^{n-1}/m)-\underline{F}_k(\eta^b)
\big|\,.$$
Yet, for all $r\in\cD$
$$\underline{F}_k(r)\,=\,\sum_{i=0}^{b-1}r_i\frac{A(i)M(i,k)}{\phi(r)}
+\underline{F}_k(\widetilde{r})\frac{\phi(\widetilde{r})}{\phi(r)}\,.$$
Thus,
\begin{multline*}\big|
z^n_k-m\eta^b_k
\big|\,\leq\,1+\sum_{i=0}^{b-1}z^{n-1}_i\frac{A(i)M(i,k)}{\phi(z^{n-1}/m)}+m\big|
\underline{F}_k(\widetilde{z}^{n-1}/m)-\underline{F}_k(\eta^b)
\big|\\+\underline{F}_k(\widetilde{z}^{n-1}/m)\frac{|\phi(\widetilde{z^{n-1}}/m)-\phi(z^{n-1}/m)|}{\phi(z^{n-1}/m)}\,.
\end{multline*}
We have,
$$\big|
\phi(\widetilde{z^{n-1}}/m)-\phi(z^{n-1}/m)
\big|\,\leq\,\sum_{j=0}^{b-1}\frac{z_j^{n-1}}{m}|A(j)-1|$$
Reporting back in the inequality for $|z^n_k-m\eta^b_k|$, we get,
\begin{multline*}
\big|
z^n_k-m\eta^b_k
\big|\,\leq\,1+m\bigg|
\underline{F}_k(
\widetilde{z^{n-1}}/m)-\underline{F}_k(\eta^b)
\bigg|\\
+\sum_{j=0}^{b-1}z^{n-1}_j\Bigg(
\frac{A(j)M(j,k)+|A(j)-1|\underline{F}_k(\widetilde{z^{n-1}}/m)}{\phi(z^{n-1}/m)}
\Bigg)
\,.
\end{multline*}
Summing from $k=b$ to $K$,
and recalling that, asymptotically, $\underline{F}$ is contracting on the set $U_b(\d)\cap\cD_b$, 
we deduce the existece of a constant $c_\d<1$ such that
$$
\big|
\widetilde{z^n}-m\eta^b
\big|_1\,\leq\,
K+c_\d\big|
\widetilde{z^{n-1}}-m\eta^b
\big|_1+
\sum_{j=0}^{b-1}z_j^{n-1}\frac{A(j)\displaystyle\sum_{b\leq k\leq K}M(j,k)+|A(j)-1|}{\overline{\phi}}
\,,
$$
Using the previous lemma, we get
that, asymptotically,
$$\big|
\widetilde{z^n}-m\eta^b
\big|_1\,\leq\,Cz^n_0+c_\d\big|
\widetilde{z^{n-1}}-m\eta^b
\big|_1\,,$$
for some constant $C$ depending on $\d$ only.
Iterating this inequality,
and noting that for $t\leq n$, we have $z^{n-t}_0\leq \rho^{-t}z_0^n$,
we conclude that
\begin{multline*}
\big|
\widetilde{z^n}-m\eta^b
\big|_1\,\leq\,C\sum_{t=0}^{n-1}c_\d^tz_0^{n-t}+c_\d^n\big|
\widetilde{z^0}-m\eta^b
\big|_1\\
\leq\,Cz_0^n\sum_{t=0}^{n-1}(c_\d/\rho)^t+c_\d^n\big|
\widetilde{z^0}-m\eta^b
\big|_1\,\leq\,
C\big(
1-c_\d/\rho
\big)^{-1}z_0^n+c_\d^n\big|
\widetilde{z^0}-m\eta^b
\big|_1\,.
\end{multline*}
The proof is achieved by taking $c_b=C(1-c_\d/\rho)^{-1}$.
\end{proof}
As a consequence of these two lemmas,
we have 
$$z^n_0\geq
\frac{z^n_1}{c_1}\vee\cdots\vee\frac{z^n_{b-1}}{c_{b-1}}\vee
\frac{|\widetilde{z^n}-m\eta^b|_1-|\widetilde{z^0}-m\eta^b|_1}{c_b}\,.$$
Thus, taking $\g(K+1)<\min\,\lbrace\,\d/c_1,\dots,\d/2c_b\,\rbrace$,
then $N_b(\d)\geq n(\g)$, as wanted.
We show next that the process $(Z_n)_{n\geq0}$
has a fairly high probability to follow the deterministic
trajectory that we have just built.
\begin{lemma}
Let $z^0=(w,0,\dots,z_b,\dots,z_K)\in\dU_b(\d/2)$
and let $(z^n)_{n\geq0}$ be the trajectory built from $z^0$ 
by setting $z^n=\lfloor m\underline{F}(z^{n-1}/m)\rfloor $,
and let $\g$ be as above.
 We have,
 $$\liminf_\lmqq\,\frac{1}{m}\ln P_{z^0}\big(
Z_{n(\g)}=z^{n(\g)},\dots,Z_1=z^1\big)\,=\,0\,.$$
 \end{lemma}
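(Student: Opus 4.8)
The plan is to bound the probability of the whole trajectory from below by the product of its one-step probabilities, and then to show that each one-step probability is only polynomially small in $m$; since $n(\g)$ is of order $\ln m$, the product is then of order $m^{-O(\ln m)}$, and its logarithm divided by $m$ tends to $0$.

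First I would reduce to a single step. Since $\Ot$ is a Markov chain and $Z_n=\pi(O_n)$, conditioning successively on $O_{n(\g)-1},\dots,O_1$ and using that on the event $\{Z_{n-1}=z^{n-1}\}$ the vector $O_{n-1}$ is a completion of $z^{n-1}$ yields
$$P_{z^0}\big(Z_1=z^1,\dots,Z_{n(\g)}=z^{n(\g)}\big)\,\geq\,\prod_{n=1}^{n(\g)}P_{z^{n-1}}\big(Z_1=z^n\big)\,,$$
where $P_{z^{n-1}}(Z_1=z^n)=\inf\{P_o(Z_1=z^n):\pi(o)=z^{n-1}\}$.

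Then I would estimate one step. Fix $n\leq n(\g)$ and $o\in\pml$ with $\pi(o)=z^{n-1}$. Writing out the multinomial transition probability of $\Zt$ explicitly and estimating the multinomial coefficient by lemma~\ref{ldmultinom}, one gets
$$\ln P_o\big(Z_1=z^n\big)\,\geq\,-\sum_{k=0}^K z^n_k\ln\frac{z^n_k/m}{F_k(o/m)}-(m-|z^n|_1)\ln\frac{(m-|z^n|_1)/m}{1-|\pi(F(o/m))|_1}-C(K)\ln m\,.$$
Because $z^n_k=\lfloor m\underline{F}_k(z^{n-1}/m)\rfloor\leq m\underline{F}_k(z^{n-1}/m)\leq mF_k(o/m)$ (the inequalities $\underline{F}_k(\pi(x))\leq F_k(x)\leq\overline{F}_k(\pi(x))$ being those recalled above), each term of the first sum is $\leq 0$ (with $0\ln 0=0$) and may be dropped. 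For the second term I would use that $1-|\pi(F(o/m))|_1\geq 1-|\pi(\overline{F}(z^{n-1}/m))|_1\geq c_0$ for some $c_0>0$, uniformly over $\cD$ and asymptotically (the probability of mutating from a class $\leq K$ to a class $>K$ is bounded below), while $|z^n|_1\geq m|\pi(\underline{F}(z^{n-1}/m))|_1-(K+1)$; hence the ratio inside the second logarithm is at most $1+c_0^{-1}\big(|\pi(\overline{F}(z^{n-1}/m))|_1-|\pi(\underline{F}(z^{n-1}/m))|_1+(K+1)/m\big)$. The crucial observation is that $|\pi(\overline{F}(r))|_1-|\pi(\underline{F}(r))|_1=\phi(r)^{-1}A(0)\sum_{k=0}^K(1-|\pi_k(r)|_1)M(k+1,k)$, and each backward mutation probability $M(k+1,k)$ is $O(q)$ (a net backward step requires at least one backward mutation, of probability $O(q)$); since $\ell q\to a$ and $m/\ell\to\a$ we have $mq\to a\a$, so $q$ is of order $1/m$ and this gap is $O(1/m)$. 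Thus the ratio is $1+O(1/m)$, its logarithm is $O(1/m)$, and, multiplied by $m-|z^n|_1\leq m$, contributes only $O(1)$. Altogether there is a constant $C'(K)$ with $\ln P_o(Z_1=z^n)\geq -C'(K)\ln m$, uniformly in $n\leq n(\g)$ and in $o$ with $\pi(o)=z^{n-1}$, whence $\ln P_{z^{n-1}}(Z_1=z^n)\geq -C'(K)\ln m$.

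Finally I would combine these bounds. Using the factorization of the first step and $n(\g)=(\ln\rho)^{-1}\ln(\g m/w)\leq C''\ln m$,
$$\frac{1}{m}\ln P_{z^0}\big(Z_1=z^1,\dots,Z_{n(\g)}=z^{n(\g)}\big)\,\geq\,-\frac{n(\g)\,C'(K)\ln m}{m}\,\geq\,-\frac{C'(K)C''(\ln m)^2}{m}\,,$$
which tends to $0$; as the left-hand side is always $\leq 0$, the $\liminf$ equals $0$. The main obstacle is precisely the estimate of the second term in the one-step bound: the mere convergence $\overline{F}(r)-\underline{F}(r)\to 0$ does not suffice, since it is multiplied by a factor of order $m$ and then accumulated over $\sim\ln m$ steps; one genuinely needs the quantitative rate $|\pi(\overline{F})|_1-|\pi(\underline{F})|_1=O(q)=O(1/m)$ together with the uniform positive lower bound on $1-|\pi(\overline{F}(r))|_1$, so that each step costs only $O(\ln m/m)$ and the whole trajectory only $O((\ln m)^2/m)$.
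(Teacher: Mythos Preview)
Your proposal is correct and follows essentially the same route as the paper's proof: factor into one-step transitions, use the multinomial/entropy bound from lemma~\ref{ldmultinom}, drop the nonpositive terms $z^n_k\ln\frac{z^n_k/m}{F_k(o/m)}$ thanks to $z^n_k/m\le\underline F_k\le F_k$, and control the remaining ``mass'' term by showing the gap between $\pi(F(o/m))$ and $\underline F(z^{n-1}/m)$ is $O(1/m)$ because backward mutation probabilities are $O(q)$. The only cosmetic difference is that you route the last estimate through the sandwich $\underline F\le F\le\overline F$ and the difference $|\overline F|_1-|\underline F|_1$, whereas the paper bounds $|F_k(o/m)-\underline F_k(z^{n-1}/m)|$ and $1-|\pi(F(o/m))|_1$ directly; the quantitative input (namely $M(h,k)\le c'/m$ for $h>k$) and the final $O((\ln m)^2/m)$ bound are the same.
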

\begin{proof}
We have,
$$
P_{z^0}\big(
Z_{n(\g)}=z^{n(\g)},\dots,Z_1=z^1
\big)\,=\,
\prod_{n=0}^{n(\g)-1}P_{z^0}\big(
Z_{n+1}=z^{n+1}\,\big|\,Z_n=z^n
\big)\,.
$$
For any $0\leq n\leq n(\g)$, 
as in the proof of the large deviations
principle~\ref{pgdtrans1}, 
$$P_{z^0}\big(
Z_{n+1}=z^{n+1}\,\big|\,Z_n=z^n
\big)\,\geq\,\inf_{o:\pi(0)=z^n}
P\big(
Z_{n+1}=z^{n+1}\,\big|\,O_n=o
\big)\,.$$
Let $o\in\pml$
be such that $\pi(o)=z^n$. We have,
\begin{multline*}
P\big(
Z_{n+1}=z^{n+1}\,\big|\,O_n=o
\big)\,=\,
\frac{m!}{z^{n+1}_0!\cdots z^{n+1}_K!
(m-|z^{n+1}|_1)!}\\
\times
F_0\Big(
\frac{o}{m}
\Big)^{z^{n+1}_0}\cdots
F_K\Big(
\frac{o}{m}
\Big)^{z^{n+1}_K}\bigg(
1-\bigg|
\pi\Big(
F \Big(
\frac{o}{m}
\Big)
\Big)
\bigg|
\bigg)^{m-|z^{n+1}|_1}\,.
\end{multline*}
Thus,
$$
\ln P\big(
Z_{n+1}=z^{n+1}\,\big|\,O_n=o
\big)\,=\,-mI_K\bigg(
\pi\Big(
F\Big(
\frac{o}{m}
\Big)
\Big),\frac{z^{n+1}}{m}
\bigg)+\Phi(o,z^{n+1})\,,
$$
where the error term $\Phi(o,z^{n+1})$
satisfies
$$
\big|\Phi(o,z^{n+1})\big|\,\leq\, C(K)(\ln m+1)\,,$$
$C(K)$ being a constant that depends on $K$ but not on $m$.
Next we bound the quantity involving
the rate function $I$.
Recall that 
\begin{multline*}
mI_K\bigg(
\pi\Big(
F\Big(
\frac{o}{m}
\Big)
\Big),\frac{z^{n+1}}{m}
\bigg)\\=\,
\sum_{k=0}^K
z^{n+1}_k\ln\frac{z^{n+1}_k/m}{F_k(o/m)}+
\big(
m-|z^{n+1}|_1
\big)\ln\frac{1-|z^{n+1}|_1/m}
{1-|\pi(F(o/m))|_1}\,.
\end{multline*}
The function $\underline{F}$
has been defined so that for all $x\in\cS^\ell$,
${\underline{F}_k\big(\pi(x)\big)\leq F_k(x)}$, for $0\leq k\leq K$. Therefore,
for all $o\in\pml$ such that $\pi(o)=z^n$,
$$
\frac{z^{n+1}_k}{m}\,=\,
\frac{1}{m}\bigg\lfloor
m\underline{F}_k\Big(
\frac{z^{n}}{m}
\Big)
\bigg\rfloor\,\leq\,\underline{F}_k\Big(
\frac{z^{n}}{m}
\Big)\,\leq\,
F_k\Big(
\frac{o}{m}
\Big)\,.$$
Thus, 
$$mI_K\bigg(
\pi_k\Big(
F\Big(
\frac{o}{m}
\Big)
\Big),\frac{z^{n+1}}{m}
\bigg)\,\leq\,\big(
m-|z^{n+1}|_1
\big)\ln\frac{1-|z^{n+1}|_1/m}
{1-|\pi(F(o/m))|_1}\,.$$
The argument of the logarithm is larger than 1,
and for all $x\geq0$, $\ln(x)\leq x-1$. Therefore,
the above quantity is bounded by
\begin{multline*}
\big(
m-|z^{n+1}|_1
\big)
\frac{\displaystyle\big|\pi\big(F(o/m)\big)\big|_1-|z^{n+1}|_1/m}
{1-\displaystyle\big|\pi\big(F(o/m)\big)\big|_1}\\
\leq\,
\frac{m-|z^{n+1}|_1}{1-\displaystyle\big|\pi\big(F(o/m)\big)\big|_1}
\sum_{k=0}^K\bigg|
\frac{1}{m}\bigg\lfloor
m\underline{F}_k\Big(
\frac{z^{n}}{m}
\Big)
\bigg\rfloor-
F_k\Big(
\frac{o}{m}
\Big)
\bigg|\\
\leq\,\frac{m-|z^{n+1}|_1}{1-\displaystyle\big|\pi\big(F(o/m)\big)\big|_1}
\sum_{k=0}^K\Bigg(
\bigg|
\underline{F}_k\Big(
\frac{z^{n}}{m}
\Big)-
F_k\Big(
\frac{o}{m}
\Big)
\bigg|+\frac{1}{m}\Bigg)\,.
\end{multline*}
For any $x\in\cS^\ell$
$$\big|
\pi\big(
F(x)
\big)
\big|_1\,=\,
\phi\big(\pi(x)\big)^{-1}\sum_{h=0}^\ell
x_hA(h)\sum_{k=0}^KM(h,k)\,.$$
On one hand, for any $h\in\zl$ 
the sum $M(h,0)+\cdots+M(h,K)$ 
is bounded by a constant $c$ which is strictly 
smaller than 1. Thus,
$|\pi_K(F(x))|_1$ is bounded above by
this same constant $c$, uniformly on $x\in\cS^\ell$.
On the other hand,
for $0\leq k\leq K$, since $\pi(o)=z^n$,
$$
\bigg|
\underline{F}_k\Big(
\frac{z^{n}}{m}
\Big)-
F_k\Big(
\frac{o}{m}
\Big)
\bigg|\,=\,
\phi(z^n/m)^{-1}
\sum_{h=k+1}^\ell\frac{o_h}{m}A(h)M(h,k)\,.
$$
Yet,
there exists a positive constant $c'$
such that asymptotically, $M(h,k)\leq c'/m$,
for $0\leq k<h\leq \ell$.
Therefore, the above quantity
is bounded by $c'/m$.
We conclude that
$$I_K\bigg(
\pi\Big(
F\Big(
\frac{o}{m}
\Big)
\Big),\frac{z^{n+1}}{m}
\bigg)\,\leq\,\frac{(1+c')(K+1)}{m(1-c)}\,.$$
Therefore,
\begin{multline*}
\frac{1}{m}\ln P_{z^0}\big(
Z_{n(\g)}=z^{n(\g)},\dots,Z_1=z^1
\big)\\\geq\,
-\sum_{n=0}^{n(\g)-1}
\inf_{o:\pi(o)=z^n}
\bigg(I_K\bigg(
\pi\Big(
F\Big(
\frac{o}{m}
\Big)
\Big),\frac{z^{n+1}}{m}
\bigg)+\frac{1}{m}\Phi(o,z^{n+1})\bigg)\\
\geq\,-\frac{n(\g)(1+c')(K+1)}{m(1-c)
}-\frac{n(\g)C(K)(\ln m+1)}{m}\,.
\end{multline*}
Since $n(\g)$ is of the order of $\ln m$,
we see that this last quantity goes to 0 when $m$ goes to infinity,
as wanted.
\end{proof}
Combining the previous lemmas 
we obtain the proof of theorem~\ref{enoughms}.
Once the process $Z_n$ has $\g m$ master sequences,
it will converge to $\rho^0$ in a few steps.
Indeed, let $\d>0$ and define
$$U_\d\,=\,\big\lbrace\,
r\in\cD:|r-\rho^0|_1<\d
\,\big\rbrace\,.$$
We have the following corollary.
\begin{corollary}\label{enterneigh}
Let $\d>0$.
There exists a positive constant $C$
such that for every $z\in\dD\setminus\lbrace\, 0\,\rbrace$
$$\liminf_\lmq\,
\frac{1}{m}\ln P_z\big(
Z_{\lfloor C\ln m\rfloor}(0)\in\dU_\d
\big)\,=\,0\,.$$
\end{corollary}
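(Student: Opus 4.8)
The strategy is to concatenate Theorem~\ref{enoughms} with the contraction of $G$ near $\rho^0$: once the process has produced $\Theta(m)$ master sequences it sits in the basin of attraction of $\rho^0$, and a small neighbourhood of $\rho^0$ is essentially absorbing for $\Zt$ on a polylogarithmic time scale. \textbf{Step 1 (reduction).} Pick $\g>0$ small enough that Theorem~\ref{enoughms} applies, and let $C_1$ be the associated constant, so that $\liminf_\lmqq\frac{1}{m}\ln P_z(Z_{\lfloor C_1\ln m\rfloor}(0)\geq\g m)=0$ for every $z\in\dD\setminus\lbrace 0\rbrace$. Set $L=\lbrace r\in\cD:r_0\geq\g\rbrace$, a compact subset of $\cD$; then $\dL=mL\cap\dD=\lbrace z\in\dD:z(0)\geq\g m\rbrace$, so Theorem~\ref{enoughms} says the process reaches $\dL$ by time $\lfloor C_1\ln m\rfloor$ with probability $\exp(-o(m))$. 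The key deterministic fact is that $L$ is forward invariant for $G$: since $\phi(r)=1-|\pi(r)|_1+\sum_{h=0}^Kr_hA(h)\leq A(0)$ on $\cD$, we have $G_0(r)=\phi(r)^{-1}r_0A(0)\geq r_0$, so the $0$th coordinate never decreases along the dynamical system; moreover every $r\in L$ has $r_0>0$, so by Proposition~\ref{convds} the $G$--trajectory from $r$ converges to $\rho^0$. Hence it suffices to prove that, starting from $\dL$, the process lies in $\dU_\d$ at time $\lfloor C\ln m\rfloor-\lfloor C_1\ln m\rfloor$ with probability $1-o(1)$, for a suitable $C\geq C_1$; composing with Theorem~\ref{enoughms} via the Markov property at time $\lfloor C_1\ln m\rfloor$ then yields the corollary.

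\textbf{Step 2 (reaching a neighbourhood of $\rho^0$).} Choose $\d'\in\,]0,\d]$ small enough that $G$ is contracting, with ratio $\l<1$, on $U_{\d'}=\lbrace r\in\cD:|r-\rho^0|_1<\d'\rbrace$; this is possible by Proposition~\ref{convds}, since for $b=0$ one has $D_0=\cD$. The set $L$ is compact, forward invariant, and every $G$--trajectory from $L$ enters the open set $U_{\d'/2}\ni\rho^0$ while staying in $L\subseteq L^\e$, so Lemma~\ref{timefar1}, applied with the compact $L$ and the open $U_{\d'/2}$, furnishes $h\in\N$ and $c>0$ such that, asymptotically, for every $z'\in\dL$ one has $Z_i/m\in U_{\d'/2}$ for some $1\leq i\leq h$ with probability at least $1-e^{-cm}$.

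\textbf{Step 3 (stickiness and assembly).} Contraction gives $|G(r)-\rho^0|_1\leq\l|r-\rho^0|_1$ on $U_{\d'}$, so for $r\in\overline{U_{\d'/2}}$ and $t\in\cD\setminus U_{\d'/2}$ we get $|G(r)-t|_1\geq(1-\l)\d'/2>0$; since $V_1(r,t)=I_K(G(r),t)$ vanishes only on the set $t=G(r)$, it is bounded below by a positive constant $c_2$ on the compact set $\overline{U_{\d'/2}}\times(\cD\setminus U_{\d'/2})$, and the large deviations upper bound of Proposition~\ref{pgdtrans1} yields, asymptotically, $\sup\lbrace P(Z_1/m\notin U_{\d'/2}\mid Z_0=z''):z''\in mU_{\d'/2}\cap\dD\rbrace\leq e^{-c_2m/2}$. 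A union bound over $\lfloor C\ln m\rfloor$ steps then shows that from any state in $mU_{\d'/2}\cap\dD$ the process stays in $mU_{\d'/2}\cap\dD\subseteq\dU_\d$ for the next $\lfloor C\ln m\rfloor$ steps with probability at least $1-\lfloor C\ln m\rfloor e^{-c_2m/2}$. Fix $C=C_1+1$, write $T_1=\lfloor C_1\ln m\rfloor$ and $T=\lfloor C\ln m\rfloor$ (so $T-T_1\geq h$ asymptotically), and apply the Markov property at $T_1$ and again at the first time after $T_1$ at which $Z_n/m$ enters $U_{\d'/2}$. Combining Steps 1–3,
$$
P_z\big(Z_T\in\dU_\d\big)\,\geq\,P_z\big(Z_{T_1}\in\dL\big)\,\big(1-e^{-cm}\big)\,\big(1-\lfloor C\ln m\rfloor\,e^{-c_2m/2}\big)\,,
$$
whose right-hand side is $\exp(-o(m))$; since the left-hand side is at most $1$, this forces $\liminf_\lmq\frac{1}{m}\ln P_z(Z_T\in\dU_\d)=0$ (the regime $\lmq$ is covered because Steps 1–3 hold under the weaker regime $\lmqq$).

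The part I expect to be the main obstacle is the stickiness estimate of Step 3: the one--step large deviations bound of Proposition~\ref{pgdtrans1} must be applied with a single constant $c_2$ uniformly over all (necessarily $m$--dependent) states $z''\in mU_{\d'/2}\cap\dD$ and then iterated $\Theta(\ln m)$ times while keeping the accumulated error $o(1)$ — which is exactly why that proposition was stated with a supremum over the starting states. The remaining ingredients are routine applications of Theorem~\ref{enoughms}, Lemma~\ref{timefar1}, and Proposition~\ref{convds}.
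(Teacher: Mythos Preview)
Your proof is correct and follows the paper's own route: invoke Theorem~\ref{enoughms} to reach $\dL=\{z:z(0)\geq\g m\}$, then Lemma~\ref{timefar1} to land in $\dU_\d$. Your Step~3 (the stickiness argument via Proposition~\ref{pgdtrans1}) is in fact more careful than the paper, which simply asserts $P_{z'}(Z_h\in\dU_\d)\geq 1-e^{-cm}$ from Lemma~\ref{timefar1} without addressing the gap between ``$Z_i\in\dU_\d$ for some $i\leq h$'' and ``$Z_T\in\dU_\d$ at the prescribed time $T$''.

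One correction worth noting: your forward--invariance claim for $L$ is false. The displayed definition of $G$ in the paper omits the factor $e^{-a}=M_\infty(0,0)$ (a typo, as the surrounding computations and the fixed--point criterion $A(b)e^{-a}>1$ confirm); the correct formula is $G_0(r)=\phi(r)^{-1}r_0A(0)e^{-a}$, which with $\phi(r)\leq A(0)$ gives only $G_0(r)\geq r_0e^{-a}$, and indeed $G_0(r)<r_0$ when $r_0$ is close to $1$. This does not damage your argument, though: Lemma~\ref{timefar1} merely requires that every $G$--trajectory from $L$ enter $U_{\d'/2}$ before exiting $L^\e$, and since each such trajectory converges to $\rho^0$ by Proposition~\ref{convds} (as $r_0>0$) while staying in the bounded set $\cD$, it suffices to take $\e$ large enough that $L^\e=\cD$.
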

\begin{proof}
Let $\g>0$ be small enough and let $C'$ associated to $\g$ as in theorem~\ref{enoughms}.
Then, for any $C>C'$ and $z\in\dD\setminus\lbrace\,0\,\rbrace$,
we have,
$$
P_z\big(
Z_{\lfloor C\ln m\rfloor}\in\dU_\d
\big)\geq\!\!\!\!\!\!\sum_{z'\in\dD:z'(0)\geq \g m}\!\!\!\!\!\!P_z\big(
Z_{\lfloor C'\ln m\rfloor}=z'
\big)P_z\big(
Z_{\lfloor C\ln m\rfloor}\in\dU_\d\big|
Z_{\lfloor C' \ln m\rfloor=z'}
\big).
$$
By lemma~\ref{timefar1},
there exist $h\in\N$ and $c>0$ such that,
asymptotically, for every $z'\in\dD$ satisfying $z_0'\geq \eta m$
$$P_{z'}\big(
Z_h\in\dU_\d
\big)\,\geq\,1-e^{-c m}\,.$$
Thus, taking $C$ such that $\lfloor C\ln m\rfloor-\lfloor C'\ln m\rfloor>h$,
and in view of theorem~\ref{enoughms}, we obtain the result of the corollary.
\end{proof}

\section{Persistence time}\label{Pertime}
We assume throughout this whole section that $A(0)\exa>1$.
The aim of this section is to compute the expected hitting time of $0$
for the process $(Z_n)_{n\geq0}$.
In order to ease the readability of the upcoming formulas,
we denote the quantity $V(\rho^0,0)$ simply as $V$.
Let us define 
$$\tau_0\,=\,\inf\big\lbrace\,n\geq0:Z_n=0\,\big\rbrace\,.$$
We will prove the following result.
\begin{theorem} For all $z\in\dD$,
$$\lim_{\genfrac{}{}{0pt}{1}{\ell,m\to\infty,\,q\to0}{\ell q\to a,\,m/\ell\to\a}}\,
\frac{1}{m}\ln E_z(\tau_0)\,=\,V\,.$$
\end{theorem}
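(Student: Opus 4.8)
The strategy is the classical Freidlin--Wentzell scheme for the expected hitting time of a stable equilibrium under small random perturbations, adapted to the present discrete setting. We want to show $\frac{1}{m}\ln E_z(\tau_0)\to V=V(\rho^0,0)$, which requires a matching upper and lower bound. Throughout we may replace $\tau_0$ by the hitting time of a small ball $\dU_\d$ around $0$ (equivalently, of $\{0\}$ itself, since once the process has no master sequences and is near $0$ it returns to $0$ with probability bounded away from $0$ in a bounded number of steps, by the large deviations lower bound of Proposition~\ref{pgdtrans1}); this reduction costs only a factor which is negligible on the exponential scale $\frac1m\ln(\cdot)$.

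\textbf{Lower bound.} I would first show $\liminf \frac1m\ln E_z(\tau_0)\ge V$ by proving that, starting from $\rho^0$ (or its neighbourhood), the probability of reaching $0$ within any subexponential — even $e^{o(m)}$ — number of steps is at most $e^{-m(V-\e)}$. The key input is Corollary~\ref{pgdtransl}: for an $l$-step excursion from a neighbourhood of $\rho^0$ to a neighbourhood of $0$, the cost is at least $\inf\{V_l(r,t):r\in U_{\rho^0},\,t\in U_0\}$, which tends to $V$ as the neighbourhoods shrink (this is where one uses that $V$ is defined as the quasipotential $V(\rho^0,0)$, the infimum of $\sum V_1(s^k,s^{k+1})$ over all chains from $\rho^0$ to $0$). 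Combined with Corollary~\ref{timefar2}, which says the process cannot loiter far from the fixed points for longer than $O(\log m/m \cdot m)=O(\log m)$ steps without paying an exponential price, a renewal/ union-bound argument over the $O(e^{o(m)})$ possible excursion start times shows the process is overwhelmingly likely to be back near $\rho^0$ rather than at $0$; hence $P_z(\tau_0\le e^{m(V-\e)})\to 0$, giving the lower bound on $E_z(\tau_0)$ via Markov's inequality. One subtlety: the neutral phase (populations with no individuals in classes $0,\dots,K$) must be handled — but by the discussion after Theorem~\ref{main} the mean exit time there is of order $\k^\ell = e^{o(m)}$ relative to... actually of order $e^{\a^{-1}m\ln\k\cdot(\ell/m)}$, so one must be a little careful; here I would invoke that $\a\psi(a)$ vs $\ln\k$ comparison is a separate matter and for this theorem (which does not mention $\psi$, only $V$) the process is started in $\dD$ and we track hitting $0$ directly, so the relevant estimate is just that $0$ is absorbing-like and the neutral excursions are short on the $e^{mV}$ scale when $V$ is what governs — I would state this carefully.

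\textbf{Upper bound.} For $\limsup\frac1m\ln E_z(\tau_0)\le V$, I would decompose the trajectory into independent ``cycles'': each cycle consists of (i) from wherever the process is, reach a neighbourhood $\dU_\d$ of $\rho^0$ — by Corollary~\ref{enterneigh} (via Theorem~\ref{enoughms}, creating $\g m$ master sequences) this takes only $O(\log m)$ steps and succeeds with probability $e^{-o(m)}$ from any $z\neq 0$; (ii) attempt the direct descent from $U_{\rho^0}$ to $0$ — by the large deviations lower bound of Corollary~\ref{pgdtransl} applied to a near-optimal chain realizing $V$, this succeeds in a bounded number $l$ of steps with probability at least $e^{-m(V+\e)}$. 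If the descent fails, the process is somewhere in $\dD\setminus\{0\}$ and we restart the cycle. The number of cycles needed is geometric with success parameter $\ge e^{-m(V+\e)}$, and each cycle lasts $O(\log m)$ steps, so $E_z(\tau_0)\le O(\log m)\cdot e^{m(V+\e)}$, whence the bound. The main obstacle is step (ii): one must verify that the infimum defining $V$ over chains can be approximated by a chain that the large deviations lower bound can actually be applied to — i.e., that $V(\rho^0,0)$ equals a finite infimum over finite chains staying in $\cD$, and that near-minimizing chains can be taken with all intermediate points in the interior where $V_1$ is finite and continuous, so that Corollary~\ref{pgdtransl} gives a genuine $e^{-m(V+\e)}$ lower bound rather than a vacuous one. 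This, together with gluing the ``reach $\rho^0$'' estimate (which is the genuinely hard, non-large-deviations input proved in Section~\ref{ems}) to the ``descend from $\rho^0$'' estimate, is the crux; the rest is bookkeeping with the strong Markov property.
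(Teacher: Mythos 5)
Your overall scheme is the right one and matches the paper's: an upper bound by cycles (reach a neighbourhood of $\rho^0$ in $O(\ln m)$ steps with probability $e^{-o(m)}$ using Theorem~\ref{enoughms}/Corollary~\ref{enterneigh}, then descend to $0$ along a near-optimal chain with probability $e^{-m(V+\e)}$ via the lower bound of Corollary~\ref{pgdtransl}, iterate), and a lower bound via Markov's inequality and an estimate that $P_z(\tau_\d\le T)$ is small for $T=e^{m(V-\e)}$. The upper bound as you sketch it is essentially the paper's proof.

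The gap is in the lower bound, and it is exactly where the paper's proof is hardest. You propose to control an excursion from a neighbourhood of $\rho^0$ to a neighbourhood of $0$ by combining (i) Corollary~\ref{pgdtransl} for the $l$-step cost, which tends to $V$, and (ii) Corollary~\ref{timefar2} to rule out loitering ``far from the fixed points.'' But Corollary~\ref{pgdtransl} is an LDP for \emph{fixed} $l$: it controls $\sup_z P(Z_{n+l}\in \dU'\,|\,Z_n=z)$ only after passing to the limit $m\to\infty$ at fixed $l$, so it gives no uniformity as $l$ ranges up to $e^{\e m}$. And Corollary~\ref{timefar2} only penalises staying outside $\dU=\bigcup_{b\in I_A}U_b$, the union of \emph{all} fixed-point neighbourhoods; it says nothing about an excursion that leaves $U_\d(\rho^0)$, parks for an arbitrarily long time near an \emph{intermediate} fixed point $\rho^b$ ($0<b<K+1$) at zero cost, and only then drops to $0$. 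Your ``renewal/union bound over start times'' does not close this: the issue is not the number of start times but the unbounded and uncontrolled length of a single excursion that rests at intermediate equilibria. The paper's proof handles this by decomposing the excursion between $T_0$ (last visit to $U_\d(\rho^0)$) and $\tau_\d$ according to a chain of random times $T_1^*, T_1^b, T_2^*,\dots$ recording successive visits to the neighbourhoods of the intermediate $\rho^b$'s, splitting at each stage into ``very long'' (handled by Lemma~\ref{timefar0rho}, which is proved from Theorem~\ref{enoughms}), ``medium but always outside $\dU$'' (handled by Corollary~\ref{timefar2}), and ``short hop to the next fixed-point neighbourhood'' (handled by the LDP with bounded $l$), and iterating over $|I(A)|$ fixed points. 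Without something of this sort, the lower bound does not go through when $I_A\neq\{0,K+1\}$.

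Two smaller points. First, the worry about the neutral phase is misplaced in this theorem: $Z_n=0$ \emph{is} the neutral state, and $\tau_0$ is its hitting time, so no neutral-phase excursion occurs on $[0,\tau_0)$; the $\k^\ell$-versus-$e^{m\psi(a)}$ comparison only enters later when assembling Theorem~\ref{main}. Second, the reduction ``replace $\tau_0$ by the hitting time of a small ball around $0$'' is only needed one way (the lower bound uses $\tau_\d\le\tau_0$); for the upper bound the paper's near-optimal chain is taken to terminate exactly at $0\in\cD$, where $V_1(s,0)=-\ln(1-|G(s)|_1)<\infty$, so the extra step of arguing ``once near $0$, hit $0$ in bounded time'' is not needed.
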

\begin{proof}
We begin by showing the upper bound: 
for any $\e>0$, we have 
$$\forall\,z\in\dD\qquad\limsup_{\genfrac{}{}{0pt}{1}{\ell,m\to\infty,\,q\to0}{\ell q\to a,\,m/\ell\to\a}}\,
\frac{1}{m}\ln E_z(\tau_0)\,\leq\,V+\e\,.$$
Let $\e>0$.
We first show that there exists a constant 
$C>0$
such that 
$$\forall\,z\in\dD\setminus\lbrace\,0\,\rbrace\qquad
P_z\big(\tau_0\leq \lfloor C\ln m\rfloor\big)
\,\geq\,e^{-m(V+2\e)}$$
Let $\g>0$, $z\in\dD\setminus\lbrace\,0\,\rbrace$,
and assume first that $z_0>\g m$.
Define the sequence $(r^n)_{n\geq0}$ by setting
$r^0=z/m$ and
$$r^n\,=\,G^n(r^0)\,,\qquad n\geq1\,.$$
The mapping $V_1$ is continuous on the first argument
in a neighborhood of $\rho^0$;
let us choose $\d$ small enough so that 
$$|r-\rho^0|_1\,<\,\d\quad\Rightarrow\quad
V_1(r,\rho^0)\,<\,\e/3\,.$$
Moreover, for $\d$ small enough
there exists $h\in\N$ such that for all $r\in\cD$ satisfying $r\geq \g$,
and for all $n\geq h$, we have
$$\big|G^n(r)-\rho^0\big|_1\,<\,\d\,.$$
Indeed, by the theorem~\ref{convds}, $\d$ can be chosen sufficiently small so 
that the {$\d$--neighborhood} of $\rho^0$ is contracting.
By continuity of the map $G$,
for all $r\in\cD$ such that $r_0>\g$,
there exists $\d_r>0$ and $h(r)\in\N$ such that if 
$$|r-t|<\d_r\quad\Lra\quad \big|G^{h(r)}(t)-\rho^0\big|\,<\,\d\,.$$
The set $\dD_\g=\lbrace\,r\in\cD:r_0\geq \g\,\rbrace$ is compact and the family
$\lbrace\,B(r,\d_r):r\in\dD_\g\,\rbrace$
is an open cover of the set $\dD_\g$.
Thus,
there exist $r_1,\dots,r_N\in\dD_\g$
such that
$$\dD_\g\,\subset\,\bigcup_{i=1}^N B(r_i,\d_{r_i})\,.$$
Set
$h$ to be the maximum of $h(r_1),\dots,h(r_n)$.
Then, for all $r\in\dD_\g$ and $n\geq h$,
we have $|G^n(r)-\rho^0|<\d$.
Let $h'\geq 0$ and let $(t^i)_{0\leq i\leq h'}$
be a sequence in $\cD$ satisfying
$$t^0\,=\,\rho^0\,,\ t^{h'}\,=\,0\,,\qquad
\sum_{i=0}^{h'-1}V_1(t^i,t^{i+1})\,\leq\,V(\rho^0,0)+\frac{\e}{3}\,.$$
Consider next the sequence $(s_i)_{0\leq i\leq h+h'+1}$
defined by
\begin{align*}
s_0&=r^0\,,\quad s_1=r^1\,,\quad\dots\,,\quad s_h=r^h\,,\\
s_{h+1}&=t^0=\rho^0\,,\quad s_{h+2}=t^1\,,\quad\dots\,,\quad s_{h+h'+1}=t^{h'}=0\,.
\end{align*}
Set $l=h+h'+1$. The sequence $(s_i)_{0\leq i\leq l}$ satisfies 
$$\sum_{i=0}^{l-1}V_1(s_i,s_{i+1})\,\leq\,V(\rho^0)+2\e/3\,.$$
Proceeding as in the proof of the large deviations principle~\ref{pgdtrans1},
we see that
$$P_z\big(
Z_{l}=0
\big)\,\geq\,
\prod_{t=0}^{l-1}P_z\big(
Z_{t+1}=\lfloor s_{t+1}m\rfloor\,\big|\,
Z_t=\lfloor s_tm\rfloor\big)\,.
$$
Then,
$$\liminf_{\genfrac{}{}{0pt}{1}{\ell,m\to\infty,\,q\to 0}{\ell q\to a,\,m/\ell\to\a}}\frac{1}{m}\,\ln
P_{z}\big(
Z_{l}=0
\big)\,\geq\,-V+2\e/3\,.$$
Thus, asymptotically,
$$P_{z}\big(
Z_{l}=0
\big)\,\geq\,e^{-m(V+\e)}\,,$$
uniformly on $z\in\dD_\g$.
Suppose now that $z\not\in\dD_\g$.
By theorem~\ref{enoughms}, there exist
$C'>0$ such that
$$\forall\, z\in\dD\setminus\lbrace\,0\,\rbrace\qquad
P_z\big(
Z_{\lfloor C'\ln m\rfloor}\geq \g m
\big)\,\geq e^{-\e m}\,.$$
Thus, for every $z\in\dD\setminus\lbrace\,0\,\rbrace$,
\begin{multline*}
P_z\big(
Z_{\lfloor C'\ln m\rfloor+l}=0
\big)\,
\geq\,
\sum_{z'\in\dD_\g}P_z\big(
Z_{\lfloor C'\ln m\rfloor+l}=0, Z_{\lfloor C'\ln m\rfloor}=z'
\big)\\
\geq\,
\sum_{z'\in\dD_\g}
P_{z'}\big(
Z_l=0
\big)P_z\big(
Z_{N'}=z'
\big)\,\geq\,e^{-m(V+2\e)}\,.
\end{multline*}
Taking $C$ such that $\lfloor C\ln m\rfloor\geq \lfloor C'\ln m\rfloor+l$, we conclude that for every $z\in\dD$,
$$P_z\big(
\tau_0\leq\lfloor C\ln m\rfloor \big)\,\geq\,
e^{-m(V+2\e)}\,.$$
Proceeding as in corollary~\ref{timefar2} we obtain that, for every $h\geq1$ and $z\in\dD\setminus\lbrace\,0\,\rbrace$,
$$
P_z\big(
\tau_0\geq h \lfloor C\ln m\rfloor
\big)\,\leq\,\big(
1-e^{-m(V+2\e)}
\big)^h\,.$$
Thus,
\begin{multline*}
E_z(\tau_0)\,=\,\sum_{n\geq0}P_z\big(
\tau_0\geq n
\big)\,
=\,
\sum_{h\geq 0}\sum_{n=h\lfloor C\ln m\rfloor}^{(h+1)\lfloor C\ln m\rfloor-1}
P_z\big(
\tau_0\geq n
\big)\\
\leq\,\sum_{h\geq0}\lfloor C\ln m\rfloor P_z\big(
\tau_0\geq h \lfloor C\ln m\rfloor
\big)\,
\leq\,
\lfloor C\ln m\rfloor e^{m(V+2\e)}\,.
\end{multline*}
We conclude that, for every $z\in\dD$,
$$\limsup_{\genfrac{}{}{0pt}{1}{\ell,m\to\infty,\,q\to 0}{\ell q \to a,\,m/\ell\to\a}}\,\frac{1}{m}\ln E_z(
\tau_0)\,\leq\,V+2\e\,,$$
We send $\e$ to 0 and we obtain the desired
upper bound.
We proceed now to the proof of the lower bound.
Let us set $\d>0$ and let us define,
for $x\in\cD$ the $\d$--neighborhood of $x$ by 
$$U_\d(x)\,=\,\big\lbrace\,
y\in\cD:|x-y|_1<\d
\,\big\rbrace\,.$$
Equally,
we set 
$\dU_\d(x)=mU_d(x)\cap \dD$.
Let $\tau_\d$ be the hitting time of
the $\d$--neighborhood of $0$,
$$\tau_\d\,=\,\inf\big\lbrace\,
n\geq 0:Z_n\in\dU_\d(0)
\,\big\rbrace\,.$$
Obviously, $\tau_\d\leq \tau_0$.
We will first show that 
for every $z\in\dU_\d(\rho^0)$,
$$\liminf_{\genfrac{}{}{0pt}{1}{\ell,m\to\infty,\,q\to0}{\ell q\to a,\,m/\ell\to\a}}\,\frac{1}{m}\ln E_z(\tau_\d)
\,\geq\,\inf\big\lbrace\,
V(r,t):r\in U_\d(\rho^0),t\in U_\d(0)
\,\big\rbrace-\d\,.$$
In order to ease the notation in the sequel,
we set $V^\d$ to be the infimum appearing in the above formula, and we write $P_z,E_z$
for the probabilities and expectations associated with the process $(Z_n)_{n\geq0}$
starting from $z$.
Using Markov's inequality,
for all $T\geq 0$
$$E_z(\tau_\d)\,\geq\,
TP_z(\tau_\d\geq T)\,.$$
Thus, we set $T\geq 0$ and we bound
the probability of the event $\lbrace\,\tau_\d<T\,\rbrace$.
Let us denote by $T_0$ the last time before $\tau_\d$ 
that the process is in $\dU_\d(\rho^0)$, i.e.,
$$T_0\,=\,\max\big\lbrace\,
n\leq \tau_\d:Z_n\in\dU_\d(\rho^0)
\,\big\rbrace\,.$$
We will bound the probability of the event $\lbrace\,\tau_\d<T\,\rbrace$
by studying the trajectory of the process $\Zt$ between $T_0$ and $\tau_\d$.
The idea is the following,
either the trajectory $(Z_n)_{T_0<n<\tau_\d}$
spends a long time outside a neighborhood of $\rho^0$ and $0$,
which is very unlikely (lemmas~\ref{timefar1}, \ref{enoughms} and corollary~\ref{timefar2}),
or it jumps in a few steps from one fixed point to another until reaching 0,
in which case the lower bound of the large deviations principle~\ref{pgdtrans1}
will give us the desired estimate.
We prove first a useful lemma.
\begin{lemma}\label{timefar0rho}
Let $\e>0$. For all $T>e^{2\e m}$
and for all $z\in\dD\setminus\lbrace\,0\,\rbrace$,
we have, asymptotically,
$$P_z\big(
Z_t\not\in\dU_\d(\rho^0)\cup\dU_\d(0),0\leq t\leq T
\big)\,\leq\,e^{-\e m}\,.$$
\end{lemma}
\begin{proof}
Let $\e,\g>0$,
as we have shown in the proof of the upper bound,
there exists $h\in\N$ such that for every $z\in\dD$ satisfying $z_0\geq \g m$,
$$P_z\big(
Z_h\in\dU_\d(\rho^0)\cup \dU_\d(0)\big)\,\geq\,
P_z\big(
Z_h\in\dU_\d(\rho^0)\big)\,\geq\,e^{-\e m /2}
\,.$$
In view of theorem~\ref{enoughms},
there exists $C>0$ such that for every 
$z\in \dD\setminus\lbrace\,0\,\rbrace$,
$$P_z\big(
Z_{\lfloor C\ln m\rfloor}(0)\geq \g m
\big)\,>\,e^{-\e m/2}\,.
$$
Thus, taking $l=\lfloor C\ln m\rfloor+h$,
we have, for every
$z\in \dD\setminus\lbrace\,0\,\rbrace$,
\begin{multline*}P_z\big(
Z_l\in \dU_\d(\rho^0)\cup\dU_\d(0)
\big)\\
\geq\,\sum_{z'\in\dD:z'_0\geq\g m}
P_z\big(Z_{\lfloor C\ln m\rfloor}=z'\big)P_{z'}\big(Z_h\in\dU_\d(\rho^0)\cup\dU_\d(0)\big)\\
\geq\sum_{z'\in\dD:z'_0\geq\g m}P_z\big(
Z_{\lfloor C\ln m\rfloor}=z'
\big)e^{-\e m/2}\,>\,e^{-\e m}\,.
\end{multline*}
Proceeding as in corollary~\ref{timefar2},
for every $j\in\N$ and for every $z\in\dD\setminus(\dU_\d(\rho^0)\cup\dU_\d(0))$,
$$P_z\big(
Z_n\not\in\dU_\d(\rho^0)\cup\dU_\d(0), 0\leq n\leq jl
\big)\,\leq\,(1-e^{-\e m})^j\,.$$
Taking $j>e^{2\e m}/l$,
this probability is smaller than $e^{-\e m}$, as wanted.
\end{proof}
We continue now with the proof of the lower bound.
We have, for $T\geq0$, $z\in\dU_\d(\rho^0)$, and $k\leq T$,
\begin{multline*}
P_z(\tau_\d< T)\,=\,\sum_{0\leq t_0<t^*< T}P_z(T_0=t_0,\tau_\d=t^*)\\
=\,\sum_{\genfrac{}{}{0pt}{1}{0\leq t_0<t^*<T}{t^*-t_0\leq k}}P_z(T_0=t_0,\tau_\d=t^*)
+\sum_{\genfrac{}{}{0pt}{1}{0\leq t_0<t^*<T}{t^*-t_0> k}}P_z(T_0=t_0,\tau_\d=t^*)
\quad\big(\dag \big)\,.
\end{multline*}
The first of the terms in the right--hand side 
of $(\dag)$
can be bounded thanks to the large deviations principle~\ref{pgdtransl}.
Indeed, if $0\leq t_0<t^*<T$
and $t^*-t_0<k$, we have
\begin{multline*}
P_z\big(
T_0=t_0,\tau_\d=t^*
\big)\,\leq\,
\sum_{z'\in\dU_\d(\rho^0)}P_z\big(
T_0=t_0,\tau_\d=t^*,Z_{t_0}=z'
\big)\\
\leq\,\sum_{z'\in\dU_\d(\rho^0)}P_{z'}\big(
Z_{t^*-t_0}\in\dU_\d(0)
\big)\,\leq\,m^{C(K)}\sup_{z'\in\dU_\d(\rho^0)}P_{z'}\big(
Z_{t^*-t_0}\in\dU_\d(0)
\big)\,,
\end{multline*}
where $C(K)$ is a positive constant that depends on $K$ but not on $m$.
We have then,
\begin{multline*}
\sum_{\genfrac{}{}{0pt}{1}{0\leq t_0<t^*<T}{t^*-t_0\leq k}}P_z\big(
T_0=t_0,\tau_\d=t^*
\big)\,\leq\,
T m^{C(K)}\sum_{h=0}^k\sup_{z'\in\dU_\d(\rho^0)}
P_{z'}\big(
Z_h\in\dU_\d(0)
\big)
\end{multline*}
Yet,
thanks to the large deviations principle of corollary~\ref{pgdtransl},
\begin{multline*}\limsup_{\genfrac{}{}{0pt}{1}{\ell,m\to\infty,\,q\to0}{\ell q\to a,\,m/\ell\to\a}}\,
\frac{1}{m}\ln 
\sum_{h=0}^k\sup_{z'\in\dU_\d(\rho^0)}P_{z'}\big(
Z_h\in\dU_\d(0)
\big)\\\leq\,-\min_{0\leq h\leq k}\inf\big\lbrace\,
V_h(x,y):x\in U_\d(\rho^0), y\in U_\d(0)
\,\big\rbrace\,\leq\,
-V^\d\,.
\end{multline*}
We deal next with the second term in the right--hand side of $(\dag)$.
Let us define the set $U$ to be the union of all 
the $\d$--neighborhoods of the fixed points $\rho^b$, $b\in I(A)$,
and the set $W$ to be this same union, but
where the neighborhoods of $\rho^0$ and $0$ have been left out, i.e.,
$$U\,=\,\bigcup_{b\in I(A)}U_\d(\rho^b)\,,\qquad
W\,=\,\bigcup_{b\in I(A)\setminus\lbrace\,0,\,K+1\,\rbrace}U_\d(\rho^b)\,.$$
We define the random time $T_1^*$ by
$$T_1^*\,=\,\min\big\lbrace\,
n\geq T_0:Z_n\in\W
\,\big\rbrace\,.\\
$$
We break the second term of the right--hand side of $(\dag)$ 
as follows,
\begin{multline*}
\sum_{\genfrac{}{}{0pt}{1}{0\leq t_0<t^*<T}{t^*-t_0>k}}P_z\big(
T_0=t_0,\tau_\d=t^*
\big)\,
=\,\sum_{\genfrac{}{}{0pt}{1}{0\leq t_0<t^*<T}{t^*-t_0>e^{\e m}}}P_z\big(
T_0=t_0,\tau_\d=t^*
\big)
\\
+\sum_{\genfrac{}{}{0pt}{1}{0\leq t_0<t^*<T}{k<t^*-t_0<e^{\e m}}}P_z\big(
T_0=t_0,\tau_\d=t^*, Z_t\not\in\dU,t_0<t<t^*
\big)
\\
+\sum_{\genfrac{}{}{0pt}{1}{0\leq t_0<t_1^*<t^*<T}{k<t^*-t_0<e^{\e m}}}P_z\big(
T_0=t_0,T_1^*=t_1^*,\tau_\d=t^*
\big)\quad\big(\ddag\big)\,.
\end{multline*}
The first of the sums in the right--hand side of $(\ddag)$ can be bounded thanks to lemma~\ref{timefar0rho}.
Indeed, if $t^*-t_0>e^{\e m}$,
then
\begin{multline*}
P_z\big(
T_0=t_0,\tau_\d=t^*
\big)\,
=\,\sum_{z'\in\dU_\d(\rho^0)}P_z\big(
T_0=t_0,Z_{t_0}=z',\tau_\d=t^*
\big)\\
\leq\,\sum_{z'\in\dU_\d(\rho^0)}P_{z'}\big(
Z_t\not\in\dU_\d(\rho^0)\cup\dU_\d(0),0\leq t\leq t^*-t_0
\big)\\
\leq\,
m^{C(K)}\sup_{z'\in\dU_\d(\rho^0)}P_{z'}\big(
Z_t\not\in\dU_\d(\rho^0)\cup\dU_\d(0),0\leq t\leq t^*-t_0
\big)\,\leq\,m^{C(K)}e^{-\e m/2}\,.
\end{multline*}
The second of the sums in the right--hand side of $(\ddag)$ can be bounded thanks to corollary~\ref{timefar2}.
Let $h$ and $c$ be as in corollary~\ref{timefar2}.
Then, we have, for $0\leq t_0<t^*<T$ and $t^*-t_0>k$,
\begin{multline*}
P_z\big(
T_0=t_0,\tau_\d=t^*,Z_t\not\in\dU, t_0<t<t^*
\big)\\
=\,\sum_{z'\in\dD\setminus\dU}P_z\big(
T_0=t_0,Z_{t_0+1}=z',\tau_\d=t^*,Z_t\not\in\dU, t_0<t<t^*
\big)\\
\leq\!\sum_{z'\in\dD\setminus\dU}P_{z'}\big(
Z_t\in\dD\setminus\dU,0\leq t<t^*-t_0
\big)\,
\leq\,
m^{C'(K)}\exp\bigg(
-m c\Big\lfloor
\frac{t^*-t_0}{h}
\Big\rfloor
\bigg),
\end{multline*}
where $C'(K)$ is a positive constant that depends on $K$ but not on $m$.
Thus,
\begin{multline*}
\sum_{\genfrac{}{}{0pt}{1}{0\leq t_0<t^*<T}{t^*-t_0>k}}P_z\big(
T_0=t_0,\tau_\d=t^*, Z_t\not\in\dU,t_0<t<t^*
\big)\\\leq\,T^2m^{C'(K)}\exp\bigg(
-m c\Big\lfloor
\frac{k}{h}
\Big\rfloor
\bigg)\,.
\end{multline*}
In order to bound the last sum in $(\ddag)$,
we introduce, for $b\in I(A)$,
the random time 
$$T^b_1\,=\,\sup\big\lbrace\,
n\leq t^*:Z_n\in\dU_\d(\rho^b)
\,\big\rbrace\,.$$
We decompose the last term in $(\ddag)$ as follows,
\begin{multline*}
\sum_{\genfrac{}{}{0pt}{1}{0\leq t_0<t_1^*<t^*<T}{k<t^*-t_0<e^{\e m}}}
P_z\big(
T_0=t_0,T_1^*=t_1^*,\tau_\d=t^*
\big)\,=\\
\sum_{\genfrac{}{}{0pt}{1}{b\in I(A)}{b\neq 0,K+1}}
\sum_{\genfrac{}{}{0pt}{1}{0\leq t_0<t_1^*<t_1<t^*<T}{k<t^*-t_0<e^{\e m}}}
P_z\big(
T_0=t_0,T_1^*=t_1^*,
Z_{t_1^*}\in\dU_\d(\rho^b),T_1^b=t_1,\tau_\d=t^*
\big)\,.
\end{multline*}
For a given $b\in I(A)\setminus\lbrace\,0,K+1\,\rbrace$,
we decompose further the above sum, by considering the three following cases:

$\bullet$ If $t_1^*-t_0>k$, then, 
for some positive constant $C(K)$
that depends on $K$ only, the above sum can be bounded by
$$Te^{3\e m}m^{C(K)}\exp\Bigg(
-m \bigg\lfloor
\frac{k}{h}
\bigg\rfloor
\Bigg)\,.$$
$\bullet$ If $t_1^*-t_0<k$ and $t^*-t_1<k$, then
the sum can be bounded thanks to the large deviations principle in corollary~\ref{pgdtransl},
which gives the following bound:
\begin{multline*}
Te^{\e m}\exp\bigg(
-m\Big(
\inf\big\lbrace\,
V(x,y):x\in U_\d(\rho^0),y\in U_\d(\rho^b)
\,\big\rbrace-\e
\Big)
\bigg)\times\\
\exp\bigg(
-m\Big(
\inf\big\lbrace\,
V(x,y):x\in U_\d(\rho^b),y\in U_\d(0)
\,\big\rbrace-\e
\Big)
\bigg)
<Te^{-m(V^\d-3\e)}\,.
\end{multline*}
$\bullet$ If $t_1^*-t_0<k$ and $t^*-t_1>k$,
then we define the set $W^{-b}$ by
$$W^{-b}\,=\,\bigcup_{\genfrac{}{}{0pt}{1}{b'\in I(A)}{b'\neq 0, b, K+1}}
U_\d(\rho^b)\,,$$
and we define the hitting time of $\W^{-b}$ after the time $T_1$ by
$$T_2^*\,=\,\inf\big\lbrace\,
n\geq T_1:Z_n\in\W^{-b}
\,\big\rbrace\,.$$
Then, the sum can be bounded by
\begin{multline*}
\sum_{\genfrac{}{}{0pt}{1}{0\leq t_0<t_1^*<t_1<t}{t^*-t_1>k,\, t^*-t_0<e^{\e m}}}
P_z\left(
\begin{matrix}
T_0=t_0,T_1^*=t_1^*,Z_{t_1^*}\in\dU_\d(\rho^b),\\
T_1=t_1,\tau_\d=t^*,Z_t\not\in\dU,t_1<t<t_1^*
\end{matrix}
\right)
\\+
\sum_{\genfrac{}{}{0pt}{1}{0\leq t_0<t_1^*<t_1<t_2^*<t}{t_1^*-t_0<k,\, t^*-t_0<e^{\e m}}}
P_z\left(
\begin{matrix}
T_0=t_0,T_1^*=t_1^*,Z_{t_1^*}\in\dU_\d(\rho^b),\\
T_1=t_1,T_2^*=t_2,\tau_\d=t^*
\end{matrix}
\right)
\end{multline*}
The first of the sums is again bounded by
$$Te^{3\e m}m^{C(K)}\exp\Bigg(
-m c\bigg\lfloor
\frac{k}{h}
\bigg\rfloor
\Bigg)\,.$$
In order to bound the second sum, we can break it again in three different cases,
and iterate this same procedure until we exhaust the fixed points in the set $I(A)$.
We will then get $3|I(A)|$ summands,
each of them being bounded by 
$$
\max\Bigg\lbrace\,
Te^{M(K)\e m}m^{C(K)}\exp\Bigg(
-m c\bigg\lfloor
\frac{k}{h}
\bigg\rfloor
\Bigg)\,,
Te^{-m(V^\d-M(K)\e)}
\,\Bigg\rbrace\,,
$$
where $M(K)$ is a natural number depending on $K$ only.
We choose $k$ large enough so that
$$c\Big\lfloor
\frac{k}{h}
\Big\rfloor\,>\,
\inf\big\lbrace\,
V(x,y):x\in U_\d(\rho^0), y\in U_\d(0)
\,\big\rbrace\,.$$
We set 
$$T\,=\,\exp\bigg(
m\Big(
\inf\big\lbrace\,
V(x,y):x\in U_\d(\rho^0),y\in U_\d(0)
\,\big\rbrace-\d
\Big)
\bigg)\,.$$
Then, taking $\e$ small enough so that $M(K)\e<\d$,
and putting the above estimates together, we conclude that, asymptotically,
$$P_z(\tau_\d\leq T)\,\leq\,e^{-m(V_\d-M(K)\e)}\,.$$
We deduce from here that,
for every $z\in\dU_\d(\rho^0)$,
$$
\liminf_{\ell,m,q}\,\frac{1}{m}\ln E_z(\tau_\d)
\geq\,\inf\big\lbrace\,
V(x,y):x\in U_\d(\rho^0),y\in U_\d(0)
\,\big\rbrace-\d\,.
$$
Now let $z\in\dD\setminus\lbrace\,0\,\rbrace$ 
and note that, from the proof of theorem~\ref{enoughms}
we can deduce that there exists $C>0$ such that
$$
P_z\big(
Z_{\lfloor C\ln m\rfloor}\in\dU_\d(\rho^0),
Z_t\neq 0, 0\leq t\leq \lfloor C \ln m\rfloor
\big)\,\geq\,e^{-\d m}\,.
$$
Therefore, for every $T\geq \lfloor C\ln m\rfloor$, 
we have
\begin{multline*}
P_z(\tau_0>T)\,\geq\,
\sum_{z'\in\dU_\d(\rho^0)}P_z\big(
\tau_0>T,Z_{\lfloor C\ln  m\rfloor}=z'
\big)\\
\geq\,\sum_{z'\in\dU_\d(\rho^0)}P_z\big(
Z_{\lfloor C\ln m\rfloor}=z',Z_t\neq 0,0\leq t\leq \lfloor C\ln m\rfloor
\big)\\
\times\P_{z'}\big(
\tau_0>T-\lfloor C\ln m\rfloor
\big)\,.
\end{multline*}
Thus, for any $z\in\dD\setminus\lbrace\,0\,\rbrace$
\begin{multline*}E_z(\tau_0)\,=\,\sum_{T\geq0}P_z(\tau_0>T)\,\geq\,
\sum_{T\geq \lfloor C\ln m\rfloor}P_z(\tau_0>T)\\
\geq\,\sum_{z'\in\dU_\d(\rho^0)}P_z\big(
Z_{\lfloor C\ln m\rfloor}=z',Z_t\neq 0,0\leq t\leq\lfloor C \ln m\rfloor
\big)\\
\times\,\sum_{T\geq \lfloor C\ln m\rfloor}P_{z'}\big(
\tau_0>T-\lfloor C\ln m\rfloor
\big)\\
\geq\,e^{-\e m}\inf_{z'\in\dU_\d(\rho^0)}E_{z'}(\tau_0)\,.
\end{multline*}
Yet, $\tau_\d<\tau_0$ by definition. Thus,
for any $z\in\dD\setminus\lbrace\,0\,\rbrace$,
$$\liminf_\lmq\,\frac{1}{m}\ln E_z(\tau_0)\,\geq\,\inf\big\lbrace\,
V(x,y):x\in U_\d(\rho^0),y\in U_\d(0)
\,\big\rbrace-2\d\,.$$
We let $\d$ go to zero and we get the desired result.
\end{proof}
In fact
the above calculations tell us further,
that for every $\d>0$, there exists $\e>0$ such that asymptotically,
$$\forall\,z\in\dD\setminus\lbrace\,0\,\rbrace\,,\qquad
P_z\big(
\tau_0\geq e^{m(V(\rho^0,0)-\d)}
\big)\,>\,1-e^{-\e m}\,.$$

\section{Concentration near \texorpdfstring{$\rho^0$}{rho0}}\label{Conc}
We assume throughout this whole section that $A(0)\exa>1$.
Our purpose is to study the behavior of the process $(Z_n)_{n\geq 0}$
inside the set $\dD$,
in order to show that it spends most of its time close to the fixed point $\rho^0$.
Let $\d>0$ and denote by $U_\d$ the $\d$--neighborhood of $\rho^0$, i.e.,
$$U_\d\,=\,\big\lbrace\,
r\in\cD:|r-\rho^0|_1<\d
\,\big\rbrace\,.$$
As previously, we write $\dU_\d\,=\,m U_\d\cap \dD$.
We also write $V(\rho^0,0)$ simply as $V$.
We introduce the following stopping times:
set $T_0=0$ and
\begin{align*}
T^*_{1}&=\inf\big\lbrace
n\geq T_{0}:
Z_n\in\dU_\d
\big\rbrace\qquad
&&T_{1}=\inf\big\lbrace
n\geq T^*_{1}:
Z_n\not\in\dU_{2\d}
\big\rbrace\\
&\ \, \vdots &&\ \ \quad\vdots\\
T^*_{i}&=\inf\big\lbrace
n\geq T_{i-1}:
Z_n\in\dU_\d
\big\rbrace\ 
&&T_{i}=\inf\big\lbrace
n\geq T^*_{i}:
Z_n\not\in\dU_{2\d}
\big\rbrace\\
&\ \, \vdots &&\ \ \quad\vdots
\end{align*}
Set also
\begin{align*}
\tau_0\,&=\,\inf\big\lbrace\,
n\geq0 : Z_n=0
\,\big\rbrace\,,\\
        \iota(n)\,&=\,\max\big\lbrace\,
i\leq n: T_{i-1}< n
\,\big\rbrace\,.
\end{align*}
Our purpose is to estimate the quantity
$$
E\bigg(
\sum_{i=1}^{\iota(\tau_0)}(T^*_i\wedge\tau_0-T_{i-1})
\bigg)\,.  $$
Noting that the argument in the expectation is bounded by $\tau_0$,
for any $i^*\in\N$, we can break the above expectation as follows:
$$E\bigg(
\sum_{i=1}^{\iota(\tau_0)}(T^*_i\wedge\tau_0-T_{i-1})
\bigg)\,\leq\,\sum_{i=1}^{i^*}
E\big(
1_{i\leq \iota(\tau_0)}(T_i^*-T_{i-1})
\big)+E(\tau_01_{\iota(\tau_0)>i^*})\,.
$$
If $1\leq i\leq \iota(\tau_0)$, then $T_{i-1}\leq \tau_0$ and $Z_{T_{i-1}}\neq 0$,
so that, using the Markov property,
$$
E\big(
1_{i\leq \iota(\tau_0)}(T_i^*-T_{i-1})
\big)\,\leq\,\sup_{z\in\dD\setminus\lbrace 0\rbrace}E_z\big(
T_1^*\wedge \tau_0
\big)\,.
$$
According to the corollary~\ref{enterneigh},
for any $\d,\e>0$, there exists $C=C(\d,\e)>0$,
such that asymptotically, for any $z\in\dD\setminus\lbrace\,0\,\rbrace$,
$$P_z\big(
Z_{\lfloor C\ln m\rfloor}\in\dU_\d
\big)\,\geq e^{-\e m}\,.$$
From this inequality we deduce the following bound.
\begin{corollary}
Let $\d,\e>0$. There exists $C=C(\d,\e)>0$ such that, asymptotically,
for every $z\in\dD\setminus\lbrace\,0\,\rbrace$,
$$\forall\,n\geq 0\qquad
P_z\big(
T_1^*\wedge\tau_0\geq n\lfloor C\ln m\rfloor
\big)\,\leq\,(1-e^{-\e m})^n\,.$$
\end{corollary}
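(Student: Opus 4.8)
The plan is to obtain the bound by iterating the single--block estimate of corollary~\ref{enterneigh} --- in the form recorded just above, namely that for every $\e>0$ there is $C'=C'(\d,\e)>0$ with $P_z\big(Z_{\lfloor C'\ln m\rfloor}\in\dU_\d\big)\geq e^{-\e m}$ asymptotically, uniformly over $z\in\dD\setminus\{0\}$ --- conditioning the process at the epochs $0,N,2N,\dots$ with $N=\lfloor C\ln m\rfloor$, for a constant $C\geq C'$ fixed below. The starting observation is the set identity
$$\{\,T_1^*\wedge\tau_0\geq nN\,\}\;=\;\{\,Z_t\notin\dU_\d\cup\{0\}\ \text{for all}\ 0\leq t\leq nN-1\,\}\,,$$
which holds because $T_1^*$ is the hitting time of $\dU_\d$ (recall $T_0=0$) and $\tau_0$ the hitting time of $0$. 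This reformulation is what makes the conditioning legitimate: the events $\{Z_t\in\dU_\d\}$ and $\{Z_t=0\}$ depend on $Z_t=\pi(O_t)$ only, so they are functions of the genuinely Markovian chain $\Ot$, and although $\Zt$ is not Markov the ordinary Markov property of $\Ot$ applies, bearing in mind the convention $P_z(\cdot)=\sup_{o:\pi(o)=z}P_o(\cdot)$.

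I would then argue by induction on $n$, the case $n=0$ being trivial. For the inductive step, fix $o_0$ with $\pi(o_0)=z\in\dD\setminus\{0\}$, split $\{0,\dots,nN-1\}$ into $\{0,\dots,N-1\}$ and $\{N,\dots,nN-1\}$, and apply the Markov property of $\Ot$ at time $N$:
\begin{multline*}
P_{o_0}\big(Z_t\notin\dU_\d\cup\{0\},\,0\leq t\leq nN-1\big)=\\
\sum_{o\in\pml}P_{o_0}\big(Z_t\notin\dU_\d\cup\{0\},\,0\leq t\leq N-1,\,O_N=o\big)\\
\times\,P_{o}\big(Z_s\notin\dU_\d\cup\{0\},\,0\leq s\leq (n-1)N-1\big)\,.
\end{multline*}
A summand contributes only if $\pi(o)\notin\dU_\d\cup\{0\}$, in particular $\pi(o)\in\dD\setminus\{0\}$, so the induction hypothesis, invoked through $P_{\pi(o)}$ (i.e. the $\sup$ over preimages), bounds the last factor by $(1-e^{-\e m})^{n-1}$; summing out $o$ leaves the factor $P_{o_0}\big(Z_t\notin\dU_\d\cup\{0\},\,0\leq t\leq N-1\big)$. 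For this factor I would choose $C$ large enough that $\lfloor C'\ln m\rfloor\leq N-1$ asymptotically; then the complementary event contains $\{Z_{\lfloor C'\ln m\rfloor}\in\dU_\d\}$, of $P_{o_0}$--probability at least $e^{-\e m}$ by the estimate recalled above (since $\pi(o_0)=z\in\dD\setminus\{0\}$), so the factor is at most $1-e^{-\e m}$. Multiplying the two bounds gives $(1-e^{-\e m})^n$; since $o_0$ was an arbitrary preimage of $z$, this is precisely the claimed bound, uniformly in $z$ and $n$ and valid asymptotically.

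This is the same geometric iteration already used to establish corollary~\ref{timefar2} and the persistence--time tail estimate of section~\ref{Pertime}, the role of lemma~\ref{timefar1} being played here by corollary~\ref{enterneigh}. The one point that genuinely needs care --- and the step I would single out as the main obstacle --- is the bookkeeping imposed by the non--Markovianity of $\Zt$: every conditioning has to be carried out for the chain $\Ot$, and the lower bound of corollary~\ref{enterneigh} must be used in its $\inf$--over--preimages formulation, so that the geometric decay holds starting from every admissible $o$ and not merely on average over the preimages of a given $z$. Once this is respected, the Markov splitting and the induction are routine.
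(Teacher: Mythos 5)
Your proof is correct and matches the paper's, which simply says the argument is ``similar to that of corollary~\ref{timefar2}'': you iterate the one-block estimate from corollary~\ref{enterneigh} by conditioning at multiples of $N=\lfloor C\ln m\rfloor$ to obtain the geometric decay, exactly the scheme of corollary~\ref{timefar2} with corollary~\ref{enterneigh} playing the role of lemma~\ref{timefar1}. If anything your write-up is more careful than the paper's, since you explicitly route the conditioning through the Markov chain $\Ot$ and take the appropriate $\inf$/$\sup$ over preimages, a point the paper leaves implicit.
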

The proof is similar to that of corollary~\ref{timefar2}.
Thanks to this bound, asymptotically,
for any $z\in\dD\setminus\lbrace\,0\,\rbrace$
\begin{multline*}
E_z\big(
T_1^*\wedge\tau_0
\big)\,=\,\sum_{k\geq 1}P_z\big(
T_1^*\wedge\tau_0\geq k
\big)\\
\leq\,\sum_{n\geq0}\sum_{k=n\lfloor C\ln m\rfloor+1}^{(n+1)\lfloor C\ln m\rfloor}
P_z\big(
T_1^*\wedge\tau_0\geq n\lfloor C\ln m\rfloor
\big)\\
\leq\,\lfloor C\ln m\rfloor\sum_{n\geq 0}(1-e^{-\e m})^n\,\leq\,e^{2\e m}\,.
\end{multline*}
We conclude that, for any $i^*\in\N$ and $\e>0$,
$$E\bigg(
\sum_{i=0}^{\iota(\tau_0)}(T_i^*\wedge\tau_0-T_{i-1})
\bigg)\,\leq\,
i^*e^{\e m}+E\big(
\tau_01_{\iota(\tau_0>i^*)}
\big)\,.$$
Let $\eta>0$ and define $t^\eta_m=e^{m(V+\eta)}$. Then,
\begin{align*}
E\big(
\tau_01_{\iota(\tau_0)>i^*}
\big)
\,&=\,E\big(
\tau_01_{\iota(\tau_0)>i^*}1_{\tau_0>t^\eta_m}
\big)+E\big(
\tau_01_{\iota(\tau_0)>i^*}1_{\tau_0\leq t^\eta_m}
\big)
\\&\leq\, 
E\big(
\tau_01_{\tau_0>t^\eta_m}\big)+t^\eta_m P\big(
\iota(t^\eta_m)>i^*
\big)
\end{align*}
Let us begin by bounding the first term on the right--hand side of this inequality.
We have, for every $n\in\N$ and $z\in\dD\setminus \lbrace\,0\,\rbrace$,
\begin{align*}
E_z\big(
\tau_01_{\tau_0>n}
\big)\,&=\,\sum_{k\geq 0}P_z\big(
\tau_01_{\tau_0>n}>k
\big)\\&=\,\sum_{k\geq0}P_z\big(
\tau_0>k\vee n
\big)\,\leq\,nP_z\big(
\tau_0>n
\big)+\sum_{k\geq n}P_z\big(
\tau_0>k
\big)\,.
\end{align*}
When proving the upper bound for the persistence time
(cf. the section~\ref{Pertime}),
we have shown the following inequality:
for every $\g>0$, there exists $C>0$ such that
$$\forall \,h\geq1\quad
\forall\,z\in\dD\setminus\lbrace\,0\,\rbrace\qquad
P_z\big(
\tau_0>h\lfloor C\ln m\rfloor
\big)\,\leq\,\Big(
1-e^{-m(V+\g)}
\Big)^h\,.$$
Using this inequality with $\g=\eta/2$, 
and setting $n=h\lfloor C\ln m\rfloor$, we get
\begin{multline*}
\!\!\!\!E_z\big(
\tau_01_{\tau_0>n}
\big)\,
\leq\,h\lfloor C\ln m\rfloor\Big(
1-e^{-m(V+\eta/2)}
\Big)^h+
\sum_{i\geq h}\lfloor C\ln m\rfloor P_z\big(
\tau_0>i\lfloor C\ln m\rfloor
\big)\\
\leq\, h\lfloor C\ln m\rfloor\Big(
1-e^{-m(V+\eta/2)}
\Big)^h+\lfloor C\ln m\rfloor\Big(
1-e^{-m(V+\eta/2)}
\Big)^he^{m(V+\eta/2)}\,.
\end{multline*}
Yet, if $h=\smash{\big\lfloor t^\eta_m/\lfloor C\ln m\rfloor\big\rfloor}$,
we have,
\begin{multline*}
E_z\big(
\tau_01_{\tau_0>t^\eta_m}
\big)\,\leq\,E_z\big(
\tau_01_{\tau_0>h\lfloor C\ln m\rfloor}
\big)\\
\leq\,e^{m(V+\eta)}\big(
1+\lfloor C\ln m\rfloor 
\big)\Big(
1-e^{-m(V-\eta/2)}
\Big)^h\\
\leq\,e^{m(V+\eta)}\big(
1+\lfloor C\ln m\rfloor
\big)e^{-h\exp(-mV-m\eta/2)}\,.
\end{multline*}
And since
$$he^{-m(V+\eta/2)}\,\geq\,
\bigg(
\frac{e^{m(V+\eta)}}{\lfloor C\ln m\rfloor}-1
\bigg)e^{-m(V+\eta/2)}\,\geq\,
\frac{e^{m\eta/2}}{\lfloor C\ln m\rfloor}-1\,,$$
the above expectation goes to $0$ when $m$ goes to infinity.
We deal now with the remaining term.
Let $\tau$ denote the exit time of the process $(Z_n)_{n\geq 0}$
from the set $\dU_{2\d}$, i.e.,
$$\tau\,=\,\inf\big\lbrace\,
n\geq 0: Z_n\not\in\dU_{2\d}
\,\big\rbrace\,.$$
We have the following bound on $\tau$.
\begin{lemma}
There exist $\g,\g'>0$ such that, asymptotically,
for all $z\in\dU_\d$,
$$P_z\big(
\tau\leq e^{m \g}
\big)\,<\,e^{-\g' m}\,.$$
\end{lemma}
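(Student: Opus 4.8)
The statement is a Freidlin--Wentzell type stability estimate: started within $\d$ of the attracting fixed point $\rho^0$, the chain $\Zt$ is overwhelmingly unlikely to leave the $2\d$--neighbourhood of $\rho^0$ before an exponentially long time. The plan is to show that, from \emph{anywhere} in $\dU_{2\d}$, the probability of leaving $\dU_{2\d}$ in a single step is exponentially small in $m$, and then to sum this bound over a time horizon of size $e^{m\g}$ via a union bound, exactly in the spirit of corollary~\ref{timefar2}.

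First I would fix the scale $\d$. Since $A(0)\exa>1$, the fixed point $\rho^0$ exists, and by proposition~\ref{convds} and the discussion following it the map $G$ is contracting in a neighbourhood of $\rho^0$; hence $\d>0$ can be chosen small enough that $G$ maps the closure $\overline{U_{2\d}}$ into $U_{2\d}$. As $\overline{U_{2\d}}$ is compact and $G$ continuous, $G(\overline{U_{2\d}})$ is a compact subset of the open set $U_{2\d}$, so that
$$d_0\,:=\,\inf\,\{\,|t-G(r)|_1:r\in\overline{U_{2\d}},\ t\in\cD\setminus U_{2\d}\,\}\,>\,0\,.$$
Recalling that $V_1(r,t)=I_K(G(r),t)$ and that, by Pinsker's inequality, $I_K(p,t)\geq\frac12|t-p|_1^2$, we obtain
$$c\,:=\,\inf\,\{\,V_1(r,t):r\in\overline{U_{2\d}},\ t\in\cD\setminus U_{2\d}\,\}\,\geq\,\tfrac12 d_0^2\,>\,0\,.$$

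Next I would apply the large deviations upper bound of proposition~\ref{pgdtrans1} with the open set $U_{2\d}$ and the closed set $\cD\setminus U_{2\d}$, which yields, asymptotically,
$$\sup_{z\in\dU_{2\d}}P_z\big(Z_1\notin\dU_{2\d}\big)\,\leq\,e^{-cm/2}\,.$$
Then, for $z\in\dU_\d\subset\dU_{2\d}$ and any integer $n\geq1$, I would decompose the event $\{\tau\leq n\}$ according to the first instant the chain leaves $\dU_{2\d}$ and use the Markov property of $\Ot$ (just as in the proofs of corollary~\ref{timefar2} and lemma~\ref{timefar0rho}) to get
$$P_z\big(\tau\leq n\big)\,\leq\,\sum_{t=1}^{n}\ \sup_{z'\in\dU_{2\d}}P_{z'}\big(Z_1\notin\dU_{2\d}\big)\,\leq\,n\,e^{-cm/2}\,.$$
Taking $\g=c/4$ and $n=\lfloor e^{m\g}\rfloor$ gives, asymptotically, $P_z(\tau\leq e^{m\g})\leq e^{mc/4}e^{-cm/2}=e^{-cm/4}$, which is the claim with $\g'=c/4$.

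The only delicate point is the positivity of the constant $c$, i.e.\ the fact that a single step out of $U_{2\d}$ costs a definite amount of rate function; this is precisely where the contraction of $G$ at $\rho^0$ (and hence the hypothesis $A(0)\exa>1$) is used, via the inclusion $G(\overline{U_{2\d}})\subset U_{2\d}$ for $\d$ small. Everything else --- the one--step large deviations bound and the union bound over exponentially many steps --- is routine and follows the pattern of the estimates already established in sections~\ref{awayfp} and~\ref{Pertime}.
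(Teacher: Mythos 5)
Your argument is correct, but it takes a genuinely different route from the paper's proof. You bound the \emph{one--step} exit probability from the outer neighbourhood $\dU_{2\d}$ and then sum over the exponentially many time steps: the key positivity $c=\inf\{V_1(r,t):r\in\overline{U_{2\d}},\ t\in\cD\setminus U_{2\d}\}>0$ comes from requiring $\d$ small enough that $G(\overline{U_{2\d}})\subset U_{2\d}$, which is indeed available because $G$ is contracting near $\rho^0$ (Proposition~\ref{convds}). The paper instead introduces the last visit $S$ to the \emph{inner} neighbourhood $\dU_\d$ before $\tau$ and splits the excursion from $S$ to $\tau$ into two regimes: if it is longer than $h$ steps the process must linger in $\dU_{2\d}\setminus\dU_\d$, a region with no fixed point, which is penalised by Corollary~\ref{timefar2}; if it is shorter than $h$ steps it is an $h$--step large deviation from $\dU_\d$ out of $\dU_{2\d}$, penalised by Corollary~\ref{pgdtransl} and the fact that $G(\overline{U_\d})\subset U_\d$. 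Your approach is shorter and avoids invoking Corollary~\ref{timefar2} entirely; the trade--off is that it needs the invariance of the outer ball $U_{2\d}$ under $G$ rather than only that of the inner ball $U_\d$, a slightly stronger smallness restriction on $\d$. Since the lemma is only ever used for $\d$ arbitrarily small (one eventually lets $\d\to0$), this difference is immaterial here, and both proofs deliver the conclusion. Two small points worth being explicit about in a polished write--up: (i) the union bound $P_z(\tau\le n)\le n\sup_{z'\in\dU_{2\d}}P_{z'}(Z_1\notin\dU_{2\d})$ must be justified by conditioning on the underlying Markov chain $\Ot$ at time $t-1$ (you flag this, and it is exactly the device used throughout the paper, since $\Zt$ itself is not Markov); (ii) the large--deviations upper bound of Proposition~\ref{pgdtrans1} is a $\limsup$ statement, so one absorbs the prefactor by replacing $c$ with, say, $c/2$ asymptotically, which is what you do.
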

\begin{proof}
Define $S$ to be the last time before $\tau$ that the process is in $\dU_\d$, i.e.,
$$S\,=\,\sup\big\lbrace\,
0\leq n\leq \tau:Z_n\in\dU_\d
\,\big\rbrace\,.$$
For any $n\geq 1$,
$$P_z\big(
\tau\leq n
\big)\,=\,\sum_{0\leq s<t\leq n}P_z\big(
S=s,\tau=t
\big)\,.$$
Let $h=h\geq 2$ and $c=c>0$ be as in corollary~\ref{timefar2}.
For a given value of $s$, we split the sum over $t$ in two parts:
$$
\sum_{0\leq s<t\leq n}P_z\big(
S=s,\tau=t
\big)\,=\,\sum_{t:t>s+h}\cdots\quad +\quad\sum_{t:s<t\leq s+h}\cdots\,.
$$
We study next the first sum, when $t>s+h$.
We condition on the state of the process at time $s+1$.
By the Markov property,
\begin{multline*}
\sum_{t:t>s+h}\cdots\,=\,\sum_{\genfrac{}{}{0pt}{1}{t:t>s+h}{z'\in\dU_{2\d}\setminus\dU_\d}}P_z\big(
S=s,Z_{s+1}=z',\tau=t
\big)\\
=\,\sum_{\genfrac{}{}{0pt}{1}{t:t>s+h}{z'\in\dU_{2\d}\setminus\dU_\d}}
P_z\left(
\begin{matrix}
S=s,Z_{s+1}=z',\\
Z_{s+1},\dots,Z_{t-1}\in\dU_{2\d}\setminus\dU_\d
\end{matrix}
\right)\\
=\!\!\!\sum_{\genfrac{}{}{0pt}{1}{t:t>s+h}{z'\in\dU_{2\d}\setminus\dU_\d}}\!\!\!P_z\big(
S=s,Z_{s+1}=z'
\big)
P_{z'}\big(
Z_1,\dots,Z_{t-s-2}\in\dU_{2\d}\setminus\dU_\d,
Z_{t-s-1}\not\in\dU_{2\d}
\big)\,.
\end{multline*}
Since the set $\dU_{2\d}\setminus\dU_\d$ contains none of the fixed points,
and since $t>s+h$,
by corollary~\ref{timefar2},
this last probability is smaller than $\exp(-mc\lfloor(t-s-2/h)\rfloor)$.
Therefore,
$$\sum_{t:t>s+h}\cdots\,\leq\,\sum_{t\geq h}
e^{-mc\left\lfloor\frac{t-1}{h}\right\rfloor}P_z\big(
S=s
\big)\,.$$
We bound next the second sum.
Conditioning on the state at time $s$:
\begin{multline*}
\sum_{t:s<t\leq s+h}\cdots\,=\,
\sum_{\genfrac{}{}{0pt}{1}{t:s<t\leq s+h}{z'\in\dU_\d}}P_z\big(
S=s,Z_s=z', \tau=t
\big)\\
\leq\,\sum_{\genfrac{}{}{0pt}{1}{t:s<t\leq s+h}{z'\in\dU_\d}}
P_{z'}\big(
Z_t\not\in\dU_{2\d}
\big)
P_z\big(
S=s,Z_s=z'
\big)\\
=\,\sum_{\genfrac{}{}{0pt}{1}{t:1\leq t\leq h}{z'\in\dU_\d}}
P_{z'}\big(
Z_t\not\in\dU_{2\d}
\big)
P_z\big(
S=s,Z_s=z'
\big)\,.
\end{multline*}
Using the large deviation principle of corollary~\ref{pgdtransl}, since $h$ is fixed, for any $t\leq h$
$$\limsup_\lmq\sup_{z'\in\dU_\d}P_{z'}\big(
Z_t\not\in\dU_{2\d}
\big)\,\leq\,-\inf\big\lbrace\,
V(x,y):x\in U_\d, y\not\in U_{2\d}
\,\big\rbrace\,.$$
Recall that $\d$ has been chosen small enough so that 
$G(\overline{U_\d})\subset U_\d$. Thus,
the above infimum is strictly positive.
We deduce that there exists $c'>0$ (depending on $d$) such that 
$$\sum_{t:1\leq t\leq h}
P_{z'}\big(
Z_t\not\in\dU_{2\d}
\big)\,\leq\,e^{-c' m}\,,$$
the bound being uniform over $z'\in\dU_\d$. 
\end{proof} 
Let us go back to the inequality
$$E\bigg(
\sum_{i=0}^{\iota(\tau_0)}(T_i^*\wedge\tau_0-T_{i-1})
\bigg)\,\leq\,i^*e^{\e m}+ E\big(
\tau_01_{\tau_0>t^{\eta}_m}
\big)+t^\eta_mP\big(
\iota(t^\eta_m)>i^*
\big)\,.$$
We set $i^*=2 e^{m(V+\eta-\g)}$.
Then, combining the previous lemma with the lemma~\ref{nbhits}, there exists $C>0$ such that
$$t^\eta_m
P\big(
\iota(t^\eta_m)>i^*
\big)\,=\,t^\eta_m P\big(
\iota(\frac{i^*}{2}e^{\g m})>i^*
\big)\,<\,t^\eta_me^{-(i^*-1)C}\,,$$
which goes to $0$ when $m$ goes to infinity.
We conclude, by choosing $\eta,\g,\e$ such that $\eta-\gamma+\e<-\e$, that
$$
E\bigg(
\sum_{i=0}^{\iota(\tau_0)}(T_i^*\wedge\tau_0-T_{i-1})
\bigg)\,\leq\,
e^{m(V-\e)}\,.
$$
\section{The neutral phase}\label{nphase}
The aim of this section is to study the process $(O_n)_{n\geq0}$
when none of the classes $0,\dots,K$ are present in the population.
Nevertheless, instead of using the occupancy process $(O_n)_{n\geq0}$
for our study, we will use a related process, namely the distance process $(D_n)_{n\geq0}$.
The distance process is a Markov chain on $\zl^m$; an element $d\in\zl^m$, is a vector that
represents the distances to the master sequence of the $m$ individuals present in the population.
The transition matrix $p_H$ of the distance process is given by
$$\forall\,d,e\in\zl^m\quad
p_H(d,e)\,=\,\prod_{1\leq i\leq m}\bigg(
\sum_{1\leq j\leq m}\frac{A(d(j))M(d(j),e(i))}{A(d(1))+\cdots+A(d(m))}
\bigg)\,.$$
The distance process and the occupancy process are related by a standard lumping procedure (cf. section 4 of~\cite{CerfWF}).
The distance process has been studied in detail in section~8 of~\cite{CerfWF}.
We state next some of the results therein, and we give a simple argument 
in order to obtain the remaining estimates that we will need.
Let $k\geq0$
We are interested in measuring the hitting time $\tau^*_k$ 
of the set of populations containing the classes $0,\dots,k$.
Let us define, with a slight abuse of notation,
\begin{align*}
\cW_k\,&=\,\big\lbrace\,
d\in\zl^m:d(i)\leq k\ \text{for some}\ 1\leq i\leq m
\,\big\rbrace\,,\\
	\cN_k\,&=\,\big\lbrace\,
d\in\zl^m:d(i)>k\ \text{for all}\ 1\leq i\leq m
\,\big\rbrace\,.
\end{align*}
The hitting time $\tau^*_k$ is then defined by
$$\tau^*_k\,=\,\inf\big\lbrace\,
n\geq0:D_n\in\cW_k
\,\big\rbrace\,.$$
The dynamics of the process $D_n$, started form any point in the set $\cN_K$,
and until the time $\tau^*_K$, is the same as if the fitness landscape were neutral.
Since we are ultimately interested in the hitting time $\tau^*_k$
for $k\geq K$, we will assume throughout the rest of this section that the 
fitness function $A$ is constant and equal to 1.

\textbf{Neutral hypothesis.} Throughout this section
we assume that $A(k)=1$ for all $k\geq0$.

Section~8 of~\cite{CerfWF} is concerned with estimating the hitting time $\tau^*_0$.
The main results therein that are of interest to us are contained in section~8.3;
we summarize them next.

$\bullet$ Concerning the expectation of the hitting time $\tau^*_0$,
for any $d\in\cN_0$,
$$\lim_\lmq \frac{1}{\ell}\ln E(\tau^*_0\,|\,D_0=d)\,=\,\ln\k\,.$$
$\bullet$ Concerning the concentration of $\tau^*_0$ around its mean,
for any $\e>0$ and $d\in\cN_0$,
$$\liminf_\lmq\frac{1}{\ell}\ln P(\tau^*_0>\k^{\ell(1-\e)}\,|\,D_0=d)\,=\,0\,.$$
Since for any $k\geq0$, the set $\cW_0$ is contained in the set $\cW_k$,
the hitting time $\tau_0$ must be larger than the hitting time $\tau_k$.
The following lemma is an immediate consequence of these observations.
\begin{lemma}\label{discup}
Asymptotically, for any $k\geq0$, $\e>0$ and $d\in\cN_k$,
$$E(\tau_k^*\,|\,D_0=d)\,\leq\,\k^{\ell(1+\e)}\,.$$
\end{lemma}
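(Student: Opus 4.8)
The statement is a direct consequence of the two estimates for $\tau^*_0$ recalled from section~8.3 of~\cite{CerfWF}, together with an elementary monotonicity of hitting times in the target set. Here is the plan.

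First I would record the relevant set inclusions. Fix $k\ge 0$. Since the master sequence belongs to Hamming class $0\le k$, any population containing the master sequence contains in particular an individual in one of the classes $0,\dots,k$, so $\cW_0\subseteq\cW_k$; dually, $\cN_k\subseteq\cN_0$. The inclusion $\cW_0\subseteq\cW_k$ gives, along every trajectory of the distance process, $\tau^*_k\le\tau^*_0$, so that for any $d\in\cN_k\subseteq\cN_0$ one has $E(\tau^*_k\,|\,D_0=d)\le E(\tau^*_0\,|\,D_0=d)$.

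Second, I would bound the right--hand side using the first recalled estimate, $\ell^{-1}\ln E(\tau^*_0\,|\,D_0=d)\to\ln\k$, invoked in the uniform-in-$d$ form provided by section~8.3 of~\cite{CerfWF}. Since $\k\ge 2$ we have $\ln\k>0$, hence $\ln\k<(1+\e)\ln\k$ for every $\e>0$; thus asymptotically $\ell^{-1}\ln E(\tau^*_0\,|\,D_0=d)\le(1+\e)\ln\k$, i.e.\ $E(\tau^*_0\,|\,D_0=d)\le\k^{\ell(1+\e)}$. Chaining this with the previous inequality yields the claim.

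There is no genuine difficulty in this argument; the only point deserving attention is that the $\tau^*_0$-estimate from~\cite{CerfWF} must be used in a version uniform in the starting configuration, so that the bound holds simultaneously over the whole (and $\ell,m$-dependent) set $\cN_k$. The second recalled estimate (the $\liminf$ concentration bound for $\tau^*_0$) plays no role in this upper bound and is needed only elsewhere.
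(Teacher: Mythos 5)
Your argument is correct and is essentially the same as the paper's: the inclusion $\cW_0\subseteq\cW_k$ gives $\tau^*_k\le\tau^*_0$ pathwise, and then the $\k^\ell$-order estimate for $E(\tau^*_0)$ from~\cite{CerfWF} yields the bound. Your remark about needing the estimate uniformly over the (size-growing) set $\cN_k$ is a fair point of care, but otherwise this is the paper's one-line proof spelled out.
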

Our next purpose is to find a lower bound for $\tau_k^*$.
Consider a population $d\in\cD_k$;
there exists an individual at distance at most $k$ from the master sequence.
On one hand, the probability for this individual to be chosen for reproduction is bounded below by
$$\min_{0\leq i\leq k} A(i)/m A(0)\,.$$
On the other hand, the probability for this individual to transform into the master sequence by mutation
is bounded bellow (at least asymptotically) by
$$(1-q)^{\ell-k}\Big(
\frac{q}{\k-1}
\big)^k\,.$$
Combining these two facts, we deduce that, asymptotically, for any $d\in\cW_k$,
$$P(\tau^*_0=1\,|\,D_0=d)\,\geq\,\frac{\displaystyle \min_{0\leq i\leq k}A(i)}{mA(0)}
(1+q)^{\ell-k}\Big(
\frac{q}{\k-1}
\Big)^{k}\,\geq\,\ell^{-M}\,,$$
for some $M>0$ that depends on $k$ but not on $m,\ell,q$.
This inequality will be the key to bound the hitting time $\tau_k^*$.
Indeed, since every time we hit the set $\cW_k$, 
we have a fairly high probability of hitting the set $\cW_0$ in one step,
and since the hitting time $\tau_0^*$ is large with high probability,
the hitting time $\tau_k^*$ cannot be very small.
We formalize this idea in the following lemma:
\begin{lemma}\label{discdown}
For any $k\geq0$, $\e>0$ and $d\in\cN_k$,
$$\liminf_\lmq \frac{1}{\ell} \ln P(\tau_k^*>\k^{\ell(1-\e)}\,|\,D_0=d)\,=\,0\,.$$
\end{lemma}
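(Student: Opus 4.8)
Throughout put $T=\k^{\ell(1-\e)}$ and fix the starting point $d\in\cN_k$. Since $\cW_0\subseteq\cW_k$ we have $\tau_k^*\leq\tau_0^*$, and since every probability is at most $1$, $\frac{1}{\ell}\ln P(\tau_k^*>T\mid D_0=d)\leq 0$; hence $\liminf\leq 0$ is automatic, and the content of the lemma is the reverse bound, namely that this probability is not exponentially small in $\ell$. The plan is to deduce this from the analogous property of $\tau_0^*$ recalled above, using the renewal structure given by the successive visits of the distance process to $\cW_k$, together with the one--step bound $P(\tau_0^*=1\mid D_0=d')\geq\ell^{-M}$ valid for $d'\in\cW_k$.

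First I would set up the two ingredients. Choose $0<\e'<\e$ and put $T'=\k^{\ell(1-\e')}$, so that $T'>T$ and, for $\ell$ large, $T'/\ell^{p}\geq T$ for every fixed power $p$. Since $\cN_k\subseteq\cN_0$, the recalled estimate for $\tau_0^*$ (used with $\e'$) gives $P(\tau_0^*>T'\mid D_0=d)\geq e^{-o(\ell)}$ for every $d\in\cN_k$. Next, let $\rho_0=\tau_k^*<\rho_1<\cdots$ be the successive instants at which the process lies in $\cW_k$. By the strong Markov property at each $\rho_i$ and the one--step bound, conditionally on the past each visit to $\cW_k$ is followed by a jump into $\cW_0$ with probability at least $\ell^{-M}$; consequently the number $V$ of instants strictly before $\tau_0^*$ at which the process is in $\cW_k$ satisfies $P(V\geq j\mid D_0=d)\leq(1-\ell^{-M})^{\,j-1}$, so $E(V\mid D_0=d)\leq\ell^{M}$ and, with $J=\lceil\ell^{M+1}\rceil$, $P(V>J\mid D_0=d)\leq e^{-\ell}$ for $\ell$ large.

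Now I would combine them. On the event $\{\tau_0^*>T'\}\cap\{V\leq J\}$, which has probability at least $\tfrac12e^{-o(\ell)}$ for $\ell$ large, the process spends at least $T'-J$ of its first $T'$ steps in $\cN_k$, and those steps split into at most $J+1$ maximal runs; by the pigeonhole principle some run has length at least $(T'-J)/(J+1)\geq T$. The first run is precisely the time the process needs to leave $\cN_k$, so its length equals $\tau_k^*$; thus either $\tau_k^*\geq T$ — the event we want — or some later run has length $\geq T$, and by the strong Markov property at the instant the process re--enters $\cN_k$, that run has the law of $\tau_k^*$ started afresh from a point of $\cN_k$. Keeping the first run apart and bounding the later ones (there are on average at most $E(V)\leq\ell^{M}$ of them) by the worst starting state, one obtains, for every $d\in\cN_k$,
$$P(\tau_k^*\geq T\mid D_0=d)+\ell^{M}\,\sup_{d'\in\cN_k}P(\tau_k^*\geq T\mid D_0=d')\,\geq\,\tfrac12e^{-o(\ell)}\,,$$
and taking the supremum over $d$ on the left gives $\sup_{d'\in\cN_k}P(\tau_k^*\geq T\mid D_0=d')\geq e^{-o(\ell)}$.

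The remaining point — and, I expect, the main obstacle — is to pass from this lower bound at the worst starting state to the same bound at \emph{every} $d\in\cN_k$, since for a generic $d$ the displayed inequality could be carried entirely by its second term. I would close this exactly as in the proof of the recalled estimate for $\tau_0^*$ in~\cite{CerfWF}: from an arbitrary $d\in\cN_k$ the neutral distance process reaches, before it can meet $\cW_k$, the bulk of ``spread--out'' configurations, from which the probability of returning to $\cW_k$ within any fixed polynomial number of generations is negligible, with probability at least $e^{-o(\ell)}$ — this is precisely the dispersal mechanism that makes $\tau_0^*$ large, read off before the first visit to $\cW_k$. Concatenating it with the supremum bound yields $P(\tau_k^*>T\mid D_0=d)\geq e^{-o(\ell)}$ for every $d\in\cN_k$, hence $\liminf_\lmq\frac{1}{\ell}\ln P(\tau_k^*>T\mid D_0=d)=0$. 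The renewal and geometric--domination step, by contrast, is routine once the one--step bound and the strong Markov property are in hand.
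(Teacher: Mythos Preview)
Your approach differs from the paper's. The paper argues by contradiction: it fixes $d\in\cN_k$, sets $-\delta=\liminf\frac1\ell\ln P_d(\tau_k^*>T)$, and assuming $\delta>0$ writes the recursion
\[
P_d\big(\tau_0^*>N(T+1)\big)\ \le\ P_d(\tau_k^*>T)\ +\ (1-\ell^{-M})\,\sup_{f\in\cN_0}P_f\big(\tau_0^*>(N-1)(T+1)\big),
\]
iterates it $N=\k^{\ell\gamma}$ times, and reaches $\sup_{d}P_d(\tau_0^*>\k^{\ell(1-\e/4)})\le e^{-\delta\ell/4}$, contradicting the known concentration for $\tau_0^*$. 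Your route is a direct pigeonhole argument on the runs in $\cN_k$ before $\tau_0^*$, combined with the geometric domination of the number $V$ of visits to $\cW_k$. Both routes use the same two ingredients (the one--step bound $P_e(\tau_0^*=1)\ge\ell^{-M}$ for $e\in\cW_k$ and the $\tau_0^*$ estimate from~\cite{CerfWF}), and both yield, cleanly, that $\sup_{d'\in\cN_k}P_{d'}(\tau_k^*>T)\ge e^{-o(\ell)}$.

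The gap you flag is genuine, and your proposed closure is not a proof. Your displayed inequality gives nothing for a specific $d$: its second term can absorb the entire right-hand side. Appealing to an unspecified ``dispersal mechanism'' from~\cite{CerfWF} that lands the process, \emph{before meeting $\cW_k$}, in configurations from which $\tau_k^*$ is large, is exactly the statement you are trying to prove, only with ``large'' replaced by ``large after a polynomial delay''. Without isolating a concrete intermediate event (for instance, that from any $d\in\cN_k$ the process reaches, in polynomially many steps and with probability $\ge e^{-o(\ell)}$, a configuration whose $\cN_k$-exit time dominates that of the supremum-achieving state), this is circular. It is worth noting that the paper's own proof elides the same point: to iterate its recursion it replaces $P_d(\tau_k^*>T)$ by $e^{-\delta\ell/2}$ uniformly over the states $f$ arising in the iteration, whereas $\delta$ was defined for the single fixed $d$. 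So the sup-to-inf passage is the real content here, and neither argument supplies it as written; if you want a complete proof you must make that step explicit.
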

\begin{proof}
Let $k\geq0$, $\e>0$ and let $d\in\cN_k$. Define $\d$ by 
$$-\d\,=\,\liminf_\lmq \frac{1}{\ell}\ln P(\tau_k^*>\k^{\ell(1-\e)}\,|\,D_0=d)\,.$$
Set $T=\k^{\ell(1-\e)}$ and let $N\in\N$. Asymptotically,
\begin{multline*}
P_d\big(\tau_0^*>N(T+1)\big)\,=\,P_d\big(\tau_k^*>T,\tau_0^*>N(T+1)\big)\\
+P_d\big(\tau_k^*\leq T,\tau^*_0>N(T+1)\big)\,.
\end{multline*}
The first of the terms is bounded by $\exp(-\d\ell/2)$, while the second one can be 
bounded by 
\begin{multline*}
\sum_{n\leq T}\sum_{e\in\cW_k}\sum_{f\not\in\cW_0}P_d\big(
\tau_k^*=n,D_n=e
\big)P_d\big(
D_{n+1}=f\,\big|\,\tau_1^*=n,D_n=e
\big)\\\times
P_d\big(
\tau_0^*>N(T+1)\,\big|\,\tau_k^*=n,D_n=e,D_{n+1}=f
\big)\,.
\end{multline*}
Yet, by the Markov property,
summing the second probability over $f$, we get
$$\sum_{f\not\in\cW_0}P_c\big(
D_{n+1}=f\,\big|\,\tau_1^*=n,D_n=e
\big)\,=\,P_e\big(\tau_0^*>1\big)\,\leq\,1-\ell^{-M}\,.$$
Using the Markov property again on the third probability, we conclude that
$$\sup_{d\in\cN_0}P_d\big(
\tau_0^*>N(T+1)
\big)\,\leq\,
e^{-\d\ell/2}+(1-\ell^{-M})\sup_{f\in\cN_0}P_f\big(
\tau_0^*>(N-1)(T+1)
\big)\,.
$$
Iterating this inequality, we obtain
\begin{multline*}
\sup_{d\in\cN_0}P_d\big(
\tau_0^*>N(T+1)
\big)\\\leq\,e^{-\d\ell/2}\sum_{n=0}^{N-1}(1-\ell^{-M})^n+(1-\ell^{-M})^N\,\leq\,\ell^Me^{-\d\ell/2} +(1-\ell^{-M})^N\,.
\end{multline*}
Thus, taking $0<\g<\e/4$ and letting $N=\k^{\ell\g}$,
we conclude from here that, asymptotically,
$$\sup_{d\in\cN_0}P_d\big(
\tau_0^*>\k^{\ell(1-\e/4)}
\big)\,\leq\,
e^{\d\ell/4}\,.
$$
Yet, in view of the result regarding the concentration of $\tau_0^*$ around its mean,
we must have $\d=0$.
\end{proof}

\section{The supercritical case}\label{super}
Define the function $a\mapsto\psi(a)$
to be equal to $V(\rho^0,0)$ on $\,]0,\ln A(0)[\,$, and 
to be equal to $0$ elsewhere.
We suppose that $\a\psi(a)>\ln\k$,
so that in particular, $A(0)\exa>1$, and $\rho^0$ is well defined.
Recall that the aim is to show that, for any 
continuous and bounded function $f:\R^{K+1}\lra\R$
$$\lim_\lmq\Bigg|\int_{\pml}f\bigg(
\frac{\pi_k(o)}{m}
\bigg)d\mu(o)-f(\rho^0)\Bigg|
\,=\,0\,.$$
Let $f:\R^{K+1}\lra\R$ be a continuous, bounded function.
By the ergodic theorem for Markov chains,
$$
\Bigg|\int_{\pml}f\bigg(
\frac{\pi_k(o)}{m}
\bigg)d\mu(o)-f(\rho^0)\Bigg|
\,\leq\,
\lim_{n\to\infty}\,
\frac{1}{n}\sum_{t=0}^{n-1}
\Bigg|
f\bigg(
\frac{\pi_K(O_t)}{m}
\bigg)-f(\rho^0)
\Bigg|\,.
$$
Let $\e>0$. We will prove that this last quantity
is smaller than $\e$, for $m,\ell$ large enough, $q$ small enough,
and $\ell q, m/\ell$ close enough to $a,\a$. 
We break the state space $\pml$
into two disjoint subsets,
the populations containing at least an individual
in one of the classes ${0,\dots,K}$,
$$\cW_K\,=\,
\Big\lbrace\,
o\in\pml:
o(0)+\cdots+o(K)\geq 1
\Big\rbrace\,,$$
and the populations containing no individuals
in any of the classes ${0,\dots,K}$,
$$\cN_K\,=\,
\Big\lbrace\,
o\in\pml:
o(0)+\cdots+o(K)=0
\Big\rbrace\,.$$
The process $\Ot$ will jump between these two sets.
We define the following sequence of stopping times, 
we set $\tau_0=0$ and 
\begin{align*}
\tau^*_1&=\inf\big\lbrace
n\geq0:
O_n\in\cW_K
\big\rbrace\qquad
&&\tau_1=\inf\big\lbrace
n\geq \tau^*_1:
O_n\in \cN_K
\big\rbrace\\
&\ \, \vdots &&\ \ \quad\vdots\\
\tau^*_k&=\inf\big\lbrace
n\geq \tau_{k-1}:
O_n\in \cW_K
\big\rbrace\ 
&&\tau_k=\inf\big\lbrace
n\geq \tau^*_k:
O_n\in\cN_K
\big\rbrace\\
&\ \, \vdots &&\ \ \quad\vdots
\end{align*}
Set next $\d>0$, and define the set $U_\d$ to
be the $\d$--neighborhood of $\rho^0$, i.e.,
$$U_\d\,=\,\big\lbrace\,
r\in\cD:|r-\rho^0|_1<\d
\,\big\rbrace\,.$$
The set $\cD$ being compact, the function $f$ is uniformly continuous on $\cD$.
We choose $\d$ small enough so that for every $r\in U_{2\d}$,
$$\big|f(r)-f(\rho^0)\big|\,<\,\e\,,$$
and so that the set $U_\d$ satisfies $G(\overline{U_\d})\subset U_\d$
(cf. theorem~\ref{convds}).
As in the previous sections,
for a set $A\subset \cD$ we denote by $\A$ the set $mA\cap \dD$.
For each $k\geq 0$ we define the following sequence of stopping times,
we set $T_{k,0}=\tau_k^*$ and
\begin{align*}
T^*_{k,1}&=\inf\big\lbrace
n\geq T_{k,0}:
Z_n\in\dU_\d
\big\rbrace\qquad
&&T_{k,1}=\inf\big\lbrace
n\geq T^*_{k,1}:
Z_n\not\in\dU_{2\d}
\big\rbrace\\
&\ \, \vdots &&\ \ \quad\vdots\\
T^*_{k,i}&=\inf\big\lbrace
n\geq T_{k,i-1}:
Z_n\in\dU_\d
\big\rbrace\ 
&&T_{k,i}=\inf\big\lbrace
n\geq T^*_{k,i}:
Z_n\not\in\dU_{2\d}
\big\rbrace\\
&\ \, \vdots &&\ \ \quad\vdots
\end{align*}
We distinguish between three different situations:
either $O_n$ is in $\cN_K$,
or $O_n$ is in $\cW$ and $\pi_K(O_n)$ is in a neighborhood of $\rho^0$,
or $O_n$ is in $\cW$ and $\pi_K(O_n)$ is outside a neighborhood of $\rho^0$.
We bound the above sum by breaking it according to these three situations,
which gives the following bound,
\begin{multline*}
\sum_{t=0}^{n-1}
\Bigg|
f\bigg(
\frac{\pi_K(O_t)}{m}
\bigg)-f(\rho^0)
\Bigg|\,\leq\,2||f||_\infty\sum_{k\geq 1}\big(
\tau^*_k\wedge n-\tau_{k-1}\wedge n
\big)
\\+\e n
+2||f||_\infty\sum_{k\geq 1}\sum_{i\geq 1}\big(
T^*_{k,i}\wedge \tau_k\wedge n-T_{k,i-1}\wedge\tau_k\wedge n
\big)
\,.\qquad (\clubsuit)
\end{multline*}
The next step is to bound the above sums.
We start with the first one of them.
We define, for $n\geq1$, the random variable $\iota(n)$ by 
$$\iota(n)\,=\,\max\big\lbrace\,  
k\geq0 : \tau_{k-1}< n
\,\big\rbrace\,.$$
We can rewrite the sum with the help of this new random variable as
$$\sum_{k\geq1}(\tau_k^*\wedge n-\tau_{k-1}\wedge n)\,=\,
\sum_{k=1}^{\iota(n)}(\tau_k^*\wedge n-\tau_{k-1})\,.$$
Define by $\tau(\cN_K)$ the hitting time of $\cN_K$, i.e.,
$$\tau(\cN_K)\,=\,\inf\big\lbrace\,
n\geq0 :O_n\in\cN_K
\,\big\rbrace\,.$$
By the last remark in the section~\ref{Pertime}, there exists a number $\g>0$
such that,
$$\max_{z\in\dU_\d}P_z\big(
\tau(\cN_K)\leq e^{m(V-\e)}
\big)\,<\,e^{-\g m}\,.$$
Thus, applying lemma~\ref{nbhits} with $N=e^{m(V-\e)}$ and $\l=1/2$,
it follows that, for every $h\geq 2$,
$$P\Big(
\iota\big(
\frac{h}{2}e^{m(V-\e)}
\big)\geq h
\Big)\,\leq\,e^{-(h-1)c}\,,$$
where $c$ is a positive constant, independent of $h$.
The next step is to bound the quantity 
$$E\Bigg(
\sum_{k=1}^{\iota(n)}(\tau^*_K\wedge n)-\tau_{k-1}
\Bigg)\,.$$
Let $i\geq 1$.
Since this quantity is obviously bounded by $n$,
we can decompose it according to whether $\iota(n)$ is greater or smaller than $i$ and bound it as follows,
$$E\Bigg(
\sum_{k=1}^{\iota(n)}(\tau^*_K\wedge n)-\tau_{k-1}
\Bigg)\,\leq\,nP\big(
\iota(n)\geq i
\big)+\sum_{k=1}^i E\big(
\tau^*_k-\tau_{k-1}
\big)\,.$$
In view of the lemma~\ref{discup}, asymptotically,
for every $o\in\cN_k$, we have 
$E_o(\tau^*_k-\tau_{k-1})\leq \k^{\ell(1+\e)}$;
we deduce that
$$E\Bigg(
\sum_{k=1}^{\iota(n)}(\tau^*_K\wedge n)-\tau_{k-1}
\Bigg)\,\leq\,nP\big(
\iota(n)\geq i
\big)+i\k^{\ell(1+\e)}\,.$$
Let us set 
$$i_n\,=\,\min\Big\lbrace\,
i:n\leq \frac{i e^{m(V-\e)}}{2}
\,\Big\rbrace\,.$$
On one hand, for $i=i_n$ we get
$$nP\big(
\iota(n)\geq i_n
\big)\leq\frac{i_ne^{m(v-\e)}}{2}P\bigg(\!
\iota\Big(
\frac{i_n e^{m(V-\e)}}{2}
\Big)\geq i_n\!
\bigg)\!\leq\frac{i_n e^{m(V-\e)}}{2}e^{-(i_n-1)\L^*(1/2)}.$$
This quantity goes to $0$ as $n$ goes to $\infty$. 
On the other hand,
$$\frac{1}{n}i_n\k^{\ell(1+\e)}\,\leq\,
\frac{2 i_n \k^{\ell(1+\e)}}{(i_n-1)e^{m(V-\e)}}\,.$$
When $n$ goes to $\infty$,
this last quantity converges to 
$$2\frac{\k^{\ell(1+\e)}}{e^{m(V-\e)}}\,=\,2\exp\Big(
m\big(
V-\e\frac{\ell}{m}(1+\e)\ln\k
\big)
\Big)\,,$$
which, for $\e$ small enough, goes to $0$ when $\ell,m$ go to $\infty$ and $q$ goes to $0$.
We proceed next to bound the second of the sums in $(\clubsuit)$.
For $n,k\geq 1$, we define the following random variables:
\begin{align*}
\iota^*(n)\,&=\,\max\big\lbrace\,
k\geq 0: \tau^*_k\leq n
\,\big\rbrace\,,\\
\iota_k(n)\,&=\,\max\big\lbrace\,
i\geq0: T_{k,i-1}< n
\,\big\rbrace\,
\end{align*}
We can rewrite the sum with the help of these 
new random variables as follows:
\begin{multline*}
\sum_{k\geq 1}\sum_{i\geq 1}\big(
T^*_{k,i}\wedge \tau_k\wedge n-T_{k,i-1}\wedge\tau_k\wedge n
\big)\,
=\,\sum_{k=1}^{\iota^*(n)-1}\Bigg(
\sum_{i=1}^{\iota_k(\tau_k)}
\big(
T_{k,i}^*\wedge\tau_k-T_{k,i-1}
\big)
\Bigg)
\\+\sum_{i=1}^{\iota_{\iota^*(n)}(\tau_{\iota^*(n)})}
\big(
T_{\iota_{\iota^*(n)},i}^*\wedge\tau_{\iota^*_n}\wedge n-T_{\iota_{\iota^*(n)},i-1}\wedge n
\big)\,.
\end{multline*}
Let $\iota(n)$ and $i_n$ be as in the previous section.
Taking the expectation, the above sum can be bounded by
$$nP\big(
\iota^*(n)\geq i_n
\big)+\sum_{k=1}^{i_n}E\Bigg(
\sum_{i=1}^{\iota_k(\tau_k)}\big(
T^*_{k,i}\wedge \tau_k-T_{k,i-1}
\big)
\Bigg)\,.$$
Noting that $\iota^*(n)\leq \iota(n)$, the first term can be shown to converge to $0$
as $n$ goes to $\infty$, as in the previous section.
Let us deal with the expectation.
We introduce the following stopping times: set $T_0=0$ and
\begin{align*}
T^*_{1}&=\inf\big\lbrace
n\geq T_{0}:
Z_n\in\dU_\d
\big\rbrace\qquad
&&T_{1}=\inf\big\lbrace
n\geq T^*_{1}:
Z_n\not\in\dU_{2\d}
\big\rbrace\\
&\ \, \vdots &&\ \ \quad\vdots\\
T^*_{i}&=\inf\big\lbrace
n\geq T_{i-1}:
Z_n\in\dU_\d
\big\rbrace\ 
&&T_{i}=\inf\big\lbrace
n\geq T^*_{i}:
Z_n\not\in\dU_{2\d}
\big\rbrace\\
&\ \, \vdots &&\ \ \quad\vdots
\end{align*}
Set also
\begin{align*}
\tau_0\,&=\,\inf\big\lbrace\,
n\geq0 : Z_n=0
\,\big\rbrace\,,\\
\iota(n)\,&=\,\max\big\lbrace\,
i\leq n: T_{i-1}< n
\,\big\rbrace\,.
\end{align*}
Fix $k\in\lbrace\,1,\dots,i_n\,\rbrace$.
By the Markov property,
\begin{multline*}
E\Bigg(
\sum_{i=1}^{\iota_k(\tau_k)}\big(
T^*_{k,i}\wedge \tau_k-T_{k,i-1}
\big)
\Bigg)\\
=\,\sum_{z\in\dD\setminus\lbrace 0\rbrace}
E\Bigg(
\sum_{i=1}^{\iota_k(\tau_k)}\big(
T^*_{k,i}\wedge \tau_k-T_{k,i-1}
\big)\,\Bigg|\,Z_{\tau_k^*}=z
\Bigg)P\big(
Z_{\tau_k^*}=z
\big)
\\
\leq\,\sup_{z\in\dD\setminus\lbrace 0\rbrace}
E_z\Bigg(
\sum_{i=1}^{\iota(\tau_0)}\big(
T^*_{i}\wedge \tau_0-T_{i-1}
\big)
\Bigg)\,.
\end{multline*}
Yet, as we have shown in section~\ref{Conc}, the last expectation is bounded by $e^{m(V-\g)}$,
for any $\g>0$.
Therefore,
$$\frac{1}{n}\sum_{1\leq k\leq i_n}E\Bigg(
\sum_{i=1}^{\iota_k(\tau_k)}\big(
T_{k,i}^*\wedge\tau_k-T_{k,i-1}
\big)
\Bigg)\,\leq\,\frac{i_n}{n}e^{m(V-\g)}\,\leq\,
\frac{2 e^{m(V-\g)}}{e^{m(V-\e)}}\,,$$
which, choosing $\e<\g$,
converges to $0$ when $m$ goes to $\infty$.

\section{The subcritical case}\label{sub}
We suppose that $\a\psi(a)<\ln\k$.
Recall that the aim is to show that, for any 
continuous and bounded function $f:\R^{K+1}\lra\R$
$$\lim_\lmq\Bigg|\int_{\pml}f\bigg(
\frac{\pi_k(o)}{m}
\bigg)d\mu(o)-f(\rho^0)\Bigg|
\,=\,0\,.$$
Let $f:\R^{K+1}\lra\R$ be a continuous, bounded function.
By the ergodic theorem for Markov chains,
$$
\Bigg|\int_{\pml}f\bigg(
\frac{\pi_k(o)}{m}
\bigg)d\mu(o)-f(0)\Bigg|
\,\leq\,
\lim_{n\to\infty}\,
\frac{1}{n}\sum_{t=0}^{n-1}
\Bigg|
f\bigg(
\frac{\pi_K(O_t)}{m}
\bigg)-f(0)
\Bigg|\,.
$$
Proceeding as in the subcritical case,
we obtain the following bound:
$$
\sum_{t=0}^{n-1}
\Bigg|
f\bigg(
\frac{\pi_K(O_t)}{m}
\bigg)-f(0)
\Bigg|\,\leq\,2||f||_\infty\bigg(\sum_{k= 1}^{\iota(n)-1}(
\tau_k-\tau_k^*)+n-\tau^*_{\iota(n)}\bigg)
\,.
$$
Denote by $\tau(\cW_K)$ the hitting time of the set $\cW_K$, i.e.,
$$\tau(\cW_K)\,=\,\big\lbrace\,
n\geq0: O_n\in\cW_K
\,\big\rbrace\,.$$
In view of lemma~\ref{discdown}, for every $\e,\g>0$, we have
$$\max_{c\in\cN_K}P_z\big(
\tau(\cW_K)>\k^{\ell(1-\e)}
\big)\,\geq\,e^{-\g m}\,.$$
Thus, using lemma~\ref{nbhits} with $\l=1-e^{-\g m}/2$ and $N=\k^{\ell(1-\e)}$,
we conclude that, for all $h\geq 2$,
$$P\Big(
\iota\big(
h(1-e^{-\g m}/2)\k^{\ell(1-\e)}
\big)\geq h
\big)\,<\,e^{-(h-1)c}\,,$$
where $c$ is a positive constant which does not depend on $h$.
The next step is to bound the quantity
$$
E\Bigg(
\sum_{k=1}^{\iota(n)-1}\big(
\tau_k-\tau_k^*
\big)
+n-\tau_{\iota(n)}^*
\Bigg)\,.
$$
Let $i\geq 1$.
Since this quantity is obviously bounded by $n$,
we can decompose it as according to whether $\iota(n)$
is greater or smaller than $i$ and bound it as follows
$$E\Bigg(
\sum_{k=1}^{\iota(n)-1}\big(
\tau_k-\tau_k^*
\big)
+n-\tau_{\iota(n)}^*
\Bigg)\,\leq\,
nP\big(
\iota(n)\geq i
\big)+
\sum_{k=1}^{i}E\big(
\tau_k-\tau_k^*
\big)\,.$$
Since for every $o\in\cW$
$$E\big(\tau_1\,\big|\,O_0=o\big)\,\leq\,
\exp\big(
m(V+\e)
\big)\,,$$
we deduce that
$$E\Bigg(
\sum_{k=1}^{\iota(n)-1}\big(
\tau_k-\tau_k^*
\big)
+n-\tau_{\iota(n)}^*
\Bigg)\,\leq\,
nP\big(
\kappa(n)\geq i
\big)+
i\exp\Big(
m\big(
\psi(a)+\e
\big)
\Big)\,.$$
Let us set 
$$i_n\,=\,\min\big\lbrace\,
i:n\leq i (1-e^{-\g m}/2)\k^{\ell(1-\e)}
\,\big\rbrace\,.$$
On one hand, for $i=i_n$ we get
\begin{multline*}
nP\big(
\iota(n)\geq i_n
\big)\,\leq\,
i_n (1-e^{-\g m}/2)\k^{\ell(1-\e)}\\
\times P\Big(
\iota\big(i_n(1-e^{-\g m}/2)\k^{\ell(1-\e)}\big)\geq i_n
\Big)\,\leq\,
i_n (1-e^{-\g m}/2)\k^{\ell(1-\e)}e^{-i_n c}\,.
\end{multline*}
This quantity goes to $0$ as $n$ goes to infinity.
On the other hand,
$$\frac{i_n}{n}
e^{m(V+\e)}\,\leq\,
\frac{i_n}{(i_n-1)(1-e^{-\g m}/2) \k^{\ell(1-\e)}}
e^{m(V+\e)}\,.$$
When $n$ goes to infinity,
this last quantity converges to
\begin{multline*}
\frac{e^{m(V+\e)}}{(1-e^{-\g m}/2)\k^{\ell(1-\e)}}\\=\,
\exp\Big(
-\ell\big(
\ln\k-\a\psi(a)-\e-\e\ln\k+(\ln (1-e^{-\g m}/2))/\ell
\big)
\Big)\,,
\end{multline*}
which, for $\e$ small enough, 
goes to $0$ with $\ell,m,q$.

\bibliographystyle{plain}
\bibliography{wfart}
\appendix
\section{Properties of the mutation matrix}
\label{Propmut}
We give here the most relevant properties of the lumped 
mutation matrix $(M_H(i,j),0\leq i,j\leq \ell)$.
The $i$--th row of the lumped mutation matrix
is given by the different of two independent binomial
laws, i.e.,
if $X\sim Bin(i,q/(\k-1))$ and $Y\sim Bin(\ell-i,q)$
are independent random variables, then
$$M_H(i,j)\,=\,P(i-X+Y=j)\,.$$
Fix $i$ and $j$
and let $\ell$ go to infinity, $q$ go to 0,
and $\ell q$ go to $a$;
the first of the binomial laws converges to a Dirac mass at $0$,
while the second one convergence to a Poisson random variable
of parameter $a$. Thus,
$$M_\infty(i,j)\,=\,\lim_{\genfrac{}{}{0pt}{1}{\ell\to\infty,\,q\to0}{\ell q\to a}}
M_H(i,j)\,=\,\begin{cases}
\exa\displaystyle\frac{a^{j-i}}{(j-i)!} &\text{if}\ j\geq i\,,\\
0 &\text{otherwise}\,.
\end{cases}$$
In particular, in the limit, there is no back mutation.
Furthermore, for $\ell$ large enough, $q$ small enough,
and $\ell q$ close enough to $a$,
$$\forall\,i>j\qquad
M_H(i,j)\,\leq\,M_H(j+1,j)\,.$$

\section{Bounds on hitting times}
Let $E$ be a finite set and $(X_n)_{n\geq 0}$ a recurrent Markov chain on $E$.
For a set $A\subset E$ we denote by $\tau_A$ the hitting time of $A$,
i.e.,
$$\tau_A\,=\,\inf\big\lbrace\,n\geq 0:X_n\in A\,\big\rbrace\,.$$
Let $A\subset B\subset E$ and
define the following sequence of stopping times, 
we set $T_0=0$ and 
\begin{align*}
T^*_1&=\inf\big\lbrace
n\geq0:
X_n\in A
\big\rbrace\qquad
&&T_1=\inf\big\lbrace
n\geq T^*_1:
X_n\not\in B
\big\rbrace\\
&\ \, \vdots &&\ \ \quad\vdots\\
T^*_k&=\inf\big\lbrace
n\geq T_{k-1}:
X_n\in A
\big\rbrace\ 
&&T_k=\inf\big\lbrace
n\geq T^*_k:
X_n\not\in B
\big\rbrace\\
&\ \, \vdots &&\ \ \quad\vdots
\end{align*}
Define, for $n\geq1$, the random variable $\iota(n)$ by 
$$\iota(n)\,=\,\max\big\lbrace\,  
k\geq0 : T_{k-1}< n
\,\big\rbrace\,.$$
Our objective is to give a bound 
on the random variable $\iota(n)$.
Let us assume that there exist $N,p>0$ such that
$$\max_{z\in A}\,P\big(
\tau_{E\setminus B}\leq N\,\big|\, X_0=z
\big)\,<\,p\,.$$
\begin{lemma}\label{nbhits}
For any $h\geq 1$ and $\l> p$,
there exists $c>0$ (depending on $\l$ but not on $h$), such that
$$P\Big(
\iota\big(h\l N \big)\geq h
\Big)\,<\,e^{-(h-1)c}\,.
$$
\end{lemma}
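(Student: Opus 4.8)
The statement to prove is Lemma~\ref{nbhits}: under the hypothesis that $\max_{z\in A} P(\tau_{E\setminus B}\le N\mid X_0=z)<p$, for any $h\ge 1$ and $\lambda>p$ there is $c>0$ (independent of $h$) with $P(\iota(h\lambda N)\ge h)<e^{-(h-1)c}$. The plan is to observe that the event $\{\iota(h\lambda N)\ge h\}$ forces the chain to complete $h-1$ full excursions (from $A$, staying in $B$, then leaving $B$) within time $h\lambda N$, and that each such excursion, by the hypothesis, is unlikely to be shorter than $N$. So if many excursions occur in a short time, many of them must have been ``fast'' (duration $<N$), and fast excursions have probability $<p$ each; a large-deviations / Chernoff bound for a sum of Bernoulli-dominated indicators then gives the exponential decay.

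\textbf{Key steps.} First I would formalise the excursion structure: between $T_{k-1}^*$ and $T_k$ the chain enters $A$ and then exits $B$; set $\Delta_k = T_k - T_k^*$ for the duration of the $k$-th excursion's ``inside-$B$'' portion. On the event $\{\iota(h\lambda N)\ge h\}$ we have $T_{h-1}<h\lambda N$, hence $\sum_{k=1}^{h-1}\Delta_k \le T_{h-1} < h\lambda N$, so the number of indices $k\le h-1$ with $\Delta_k \ge N$ is at most $h\lambda N / N = h\lambda$; equivalently, writing $J_k = \mathbf{1}_{\Delta_k < N}$, we get $\sum_{k=1}^{h-1} J_k \ge (h-1) - h\lambda$. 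Wait --- I need to be careful with constants; since $\lambda>p$ can be taken $<1$ (if $\lambda\ge 1$ the bound is near-trivial as $h\lambda N$ exceeds the expected time anyway, but let me just handle $\lambda$ as given), the fraction of fast excursions is $\ge 1 - \frac{h\lambda}{h-1}$, which for $h$ large is close to $1-\lambda$. Second, I would apply the strong Markov property at the stopping times $T_k^*$: conditionally on $\mathcal F_{T_k^*}$, the chain starts at a point $X_{T_k^*}\in A$, so $P(\Delta_k < N \mid \mathcal F_{T_k^*}) = P_{X_{T_k^*}}(\tau_{E\setminus B} < N) \le \max_{z\in A}P_z(\tau_{E\setminus B}\le N) < p$. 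Thus the sequence $(J_k)$ is stochastically dominated, uniformly in the past, by i.i.d.\ Bernoulli$(p)$ variables. Third, I would invoke a Chernoff bound: for $B_1,\dots,B_{h-1}$ i.i.d.\ Bernoulli$(p)$ and any threshold $\theta > p$, $P\big(\sum_{k=1}^{h-1} B_k \ge (h-1)\theta\big) \le e^{-(h-1)\,\Lambda^*(\theta)}$ where $\Lambda^*$ is the Cram\'er rate function of Bernoulli$(p)$, which is strictly positive at $\theta>p$. Taking $\theta$ slightly below $1-\lambda$ (valid once $h$ is large enough to absorb the $\frac{h\lambda}{h-1}$ versus $\lambda$ discrepancy), and enlarging the constant to cover the finitely many small $h$, yields the claim with $c = \Lambda^*(\theta)>0$.

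\textbf{The main obstacle.} The genuinely delicate point is the stochastic domination argument: the $J_k$ are not independent, and the number of terms $\iota$ is itself random, so I cannot blindly apply a fixed-length Chernoff bound. The clean way around this is to work on the fixed-length event $\{T_{h-1}<h\lambda N\}$ (so exactly $h-1$ excursions are under consideration) and to build, via the strong Markov property at each $T_k^*$, a coupling in which $J_k \le B_k$ for an i.i.d.\ Bernoulli$(p)$ sequence $(B_k)$ --- or, more simply, to bound $E\big[\exp(t\sum_{k=1}^{h-1}J_k)\big]$ by peeling off one conditional expectation at a time: $E[e^{t\sum_{k\le h-1}J_k}] = E\big[e^{t\sum_{k\le h-2}J_k}\,E[e^{tJ_{h-1}}\mid \mathcal F_{T_{h-1}^*}]\big] \le (1-p+pe^t)\,E[e^{t\sum_{k\le h-2}J_k}]$, iterating down to $(1-p+pe^t)^{h-1}$. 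Then Markov's inequality on $e^{t\sum J_k}$ with the optimal $t$ gives the rate function, and the rest is bookkeeping with the constants relating $\theta$, $\lambda$ and $p$.
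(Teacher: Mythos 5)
Your proof follows the same strategy as the paper's: decompose $T_{h-1}\ge\sum_{i=1}^{h-1}Y_i$ with $Y_i=T_i-T_i^*$ (your $\Delta_k$), note that the strong Markov property at $T_i^*$ gives $P(Y_i\le N\mid\mathcal F_{T_i^*})<p$ uniformly in the past, and bound the probability that too many excursions are short via a Chernoff/Cram\'er inequality obtained by peeling off one conditional moment generating function at a time. The excursion decomposition, the use of the strong Markov property, and the appearance of the Cram\'er transform of the Bernoulli law are exactly the paper's.

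There is, however, a discrepancy in the constants that you notice but do not resolve, and it also touches the paper's own proof. You correctly derive that $T_{h-1}<h\lambda N$ implies $\sum_k J_k>(h-1)-h\lambda$ (since the number of ``long'' excursions, each contributing more than $N$ to the sum, is $<h\lambda$); the paper instead asserts that the same event forces $\sum_i\epsilon_i\ge(h-1)\lambda$, which fails once $\lambda>\tfrac{h-1}{2h-1}$ (take $h=3$, $\lambda=0.9$, $Y_1=Y_2=1.3N$: then $\sum Y_i<2.7N$ yet no $Y_i$ is $\le N$). With your correct count, the Chernoff bound has to be applied at a level $\theta=1-\tfrac{h\lambda}{h-1}\to 1-\lambda$ rather than at $\lambda$, and for $\Lambda^*(\theta)>0$ one needs $\theta>p$, i.e.\ essentially $\lambda<1-p$. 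This is a different hypothesis from the stated $\lambda>p$, and your writeup flags the tension (``I need to be careful with constants'') without resolving it. To make the argument complete you should (i) work under $\lambda<1-p$, (ii) apply the Chernoff bound at $\theta=1-\tfrac{h\lambda}{h-1}$ for the $h$ large enough that $\theta>p$, and (iii) cover the finitely many remaining small $h$ by shrinking $c$, observing that each of those finitely many probabilities is strictly less than $1$. Finally, your parenthetical remark that the case $\lambda\ge 1$ is ``near-trivial because $h\lambda N$ exceeds the expected time anyway'' runs in the wrong direction: when $h\lambda N$ is larger than the typical $T_{h-1}$, the event $\{\iota(h\lambda N)\ge h\}$ becomes \emph{likely}, so a uniform exponential upper bound is then hardest, not easiest, to prove.
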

\begin{proof}
Let us assume that $h\l$ is an integer number (otherwise we may replace it by $\lfloor h \l\rfloor$).
From the definition of $\iota(n)$, we see that
$$\iota\Big(
h\l N\Big)\,\geq\, h\quad\Leftrightarrow\quad T_{h-1}\,<\,h\l p\,.$$
We define the random variables $(Y_i)_{i\geq 1}$ by setting
$$Y_i\,=\,T_i-T^*_i\,,\quad i\geq1\,.$$
Then,
$$T_{h-1}\,\geq\,Y_1+\cdots+Y_{h-1}\,.$$
In view of the assumption on $\tau_{E\setminus B}$, for every $i\geq 1$,
$$P\big(
Y_i\leq N
\big)\,<\,p\,.$$
We define the following sequence of Bernoulli random variables
$$\e_i\,=\,1_{Y_i\leq N}\,,\quad i\geq 1\,.$$
Thus, if $T_{h-1}<h\l N$,
at least $(h-1)\l$ of the random variables $Y_1,\dots,Y_{h-1}$ must satisfy
$Y_i\leq N$. Whence,
$$P\big(
T_{h-1}<h\l N\big)\,\leq\,
P\big(
\e_1+\cdots+\e_{h-1}\geq (h-1)\l
\big)\,.$$
We use the exponential Chebyshev inequality in order to bound the last probability:
for any $\beta>0$ we have
$$P\big(
\e_1+\cdots+\e_{h-1}\geq (h-1)\l
\big)\,\leq\,e^{-\b\l}E\big(
e^{\b\e_1/(h-1})\cdots e^{\b\e_{h-1}/(h-1)}
\big)\,.$$
The random variables $\e_1,\dots,\e_{h-2}$ are measurable with respect to
$$\smash{\big(X_n,0\leq n\leq T^*_{h-1}\big)}$$.
Thus, thanks to the strong Markov property,
\begin{multline*}
E\big(
\e^{\b\e_1/(h-1)}\cdots e^{\b\e_{h-1}/(h-1)}
\big)\\
=\,E\Big(
E\big(
e^{\b\e_1/(h-1)}\cdots e^{\b\e_{h-1}/(h-1)}\,\big|\,
X_0,\dots,X_{T^*_{h-1}}
\big)
\Big)\\
=\,E\Big(
e^{\b\e_1/(h-1)}\cdots e^{\b\e_{h-2}/(h-1)}E\big(
e^{\b\e_{h-1}/(h-1)}\,\big|\,X_0,\dots,X_{\tau^*_{h-1}}
\big)
\Big)\,.
\end{multline*}
Yet, for all $x\in A$,
$$E\big(
e^{\b\e_1/(h-1)}\,\big|\,X_0=z
\big)\,\leq\,e^{\b/(h-1)}p+1-p\,.$$
Iterating, this procedure, we obtain
$$E\big(
e^{\b\e_1/(h-1)}\cdots e^{\b\e_{h-1}/(h-1)}
\big)\,\leq\,\big(
e^{\b/(h-1)}p+1-p
\big)^{h-1}\,.$$
We make the change of variables $\b\to(h-1)\b$ in order to obtain
\begin{multline*}
P\big(
\e_1+\cdots+\e_{h-1}\geq (h-1)\l
\big)\,\leq\,\exp\Big(
-(h-1)\big(\b\l-\ln(
e^{\b}p+1-p
)
\big)
\Big)\,.
\end{multline*}
Denote by $\L^*(t)$ the Cramèr transform of the Bernoulli law with parameter $p$,
$$
\L^*(t)\,=\,\sup_{\b\geq0}\big(
\b t-\ln(
e^\b p+1-p
)
\big)\,
=\,t\ln\frac{t}{p}+(1-t)\ln\frac{1-t}{1-p}\,.
$$
Optimizing the previous inequality over $\b$,
we obtain
$$P\big(
\e_1+\cdots+\e_{h-1}\geq (h-1)\l
\big)\,\leq\,\exp\big(
-(h-1)\L^*(\l)
\big)\,,$$
where $\L^*(\l)>0$ is independent of $h$.
It follows that
$$P\big(
\iota(
h\l N
)\geq h
\big)\,\leq\,e^{-(h-1)\L^*(\l)}\,,$$
as wanted.
\end{proof}
\end{document}